\numberwithin{equation}{section}
\newtheorem{thm}[equation]{Theorem}
\newtheorem*{thm*}{Theorem}
\newtheorem{lem}[equation]{Lemma}
\newtheorem*{conj*}{Conjecture}
\newtheorem{prop}[equation]{Proposition}
\newtheorem{cor}[equation]{Corollary}
\newtheorem*{corollary*}{Corollary}
\theoremstyle{definition}
\newtheorem{defn}[equation]{Definition}
\newtheorem{rem}[equation]{Remark}
\begin{document}

\title{$K'$-Theory of a Local Ring of Finite Cohen-Macaulay Type}
\author{Viraj Navkal}
\thanks{This work was partially supported by the National Science Foundation Award DMS-0966821.}
\address{Viraj Navkal, Mathematics Department, UCLA, Los Angeles, CA 90095, USA}
\email{viraj@math.ucla.edu}
\date{}

\begin{abstract}
We study the $K'$-theory of a CM Henselian local ring $R$ of finite Cohen-Macaulay type.  We first describe a long exact sequence involving the groups $K_i'(R)$ and the $K$-groups of certain other rings, including the Auslander algebra.  By examining the terms and maps in the sequence, we obtain information about $K'(R)$.
\end{abstract}

\maketitle
\vskip-\baselineskip
\vskip-\baselineskip
\vskip-\baselineskip

\section{Introduction}
\label{introduction}
This paper generalizes the following result of Auslander and Reiten; see \cite[$\S 2$, Prop. 2.2]{AusRei86} for the original statement or \cite[13.7]{Yos90} for the statement in this form.
\begin{thm} \label{AR}
Let $R$ be a Henselian CM local ring of finite representation type.  Denote by $H$ the free abelian group on the set of isomorphism classes of indecomposable maximal Cohen-Macaulay $R$-modules.  Then the map $H \longrightarrow K_0'(R)$ sending $[M]$ to $[M]$ is surjective, and its kernel is the subgroup
\[
\langle [N]-[E]+[M]\ |\ \exists \mathrm{\emph{ an Auslander-Reiten sequence}}\ 0 \longrightarrow N \longrightarrow E \longrightarrow M \longrightarrow 0 \rangle .
\]
\end{thm}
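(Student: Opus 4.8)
The plan is to realize $K_0'(R)=G_0(R)$, the Grothendieck group of the abelian category $\operatorname{mod} R$ of finitely generated $R$-modules, as the bottom piece of a localization sequence attached to the \emph{Auslander algebra}. Write $M_0=R, M_1,\dots,M_n$ for the indecomposable maximal Cohen--Macaulay (MCM) modules (finitely many, by hypothesis), set $M=\bigoplus_{i=0}^n M_i$ and $\Lambda=\operatorname{End}_R(M)$. Finite CM type means every MCM module lies in $\operatorname{add} M$, so $\operatorname{Hom}_R(M,-)$ is an equivalence from $\operatorname{MCM}(R)$ onto the category of finitely generated projective $\Lambda$-modules; hence $H$ is naturally identified with $K_0(\Lambda)$, free on the classes $[P_i]$ of the indecomposable projectives $P_i=\operatorname{Hom}_R(M,M_i)$, with $P_0$ corresponding to $R$. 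Finite CM type also forces $R$ to be an isolated singularity (Auslander; Huneke--Leuschke), and I would take as imported inputs: almost split sequences exist in $\operatorname{MCM}(R)$ and have MCM end terms (see \cite{Yos90}); the stable Hom groups $\underline{\operatorname{Hom}}_R(M_i,M_j)$ (homomorphisms modulo those factoring through free modules) have finite length; and $\Lambda$ has finite global dimension (Leuschke), so that the Cartan map identifies $K_0(\Lambda)=H$ with $G_0(\Lambda)$.

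Next I would set up the localization. Let $e=e_0\in\Lambda$ be the idempotent projecting onto $M_0=R$, so that $e\Lambda e=\operatorname{End}_R(R)=R$. The exact functor $N\mapsto eN$ from $\operatorname{mod}\Lambda$ to $\operatorname{mod} R$ realizes $\operatorname{mod} R$ as the Gabriel quotient of $\operatorname{mod}\Lambda$ by the Serre subcategory $\{N:eN=0\}=\operatorname{mod}(\Lambda/\Lambda e\Lambda)$; by the finite-length input, $\underline\Lambda:=\Lambda/\Lambda e\Lambda\cong\underline{\operatorname{End}}_R(M)$ is an Artin algebra. Quillen's localization theorem then produces the long exact sequence
\[
\cdots\longrightarrow G_i(\underline\Lambda)\longrightarrow G_i(\Lambda)\longrightarrow K_i'(R)\longrightarrow G_{i-1}(\underline\Lambda)\longrightarrow\cdots
\]
(this is the sequence advertised in the abstract), whose degree-zero part is $G_0(\underline\Lambda)\xrightarrow{\ \alpha\ }G_0(\Lambda)\to K_0'(R)\to 0$. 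Since $eP_i=\operatorname{Hom}_R(R,M_i)=M_i$, the composite $H\cong K_0(\Lambda)\cong G_0(\Lambda)\twoheadrightarrow K_0'(R)$ sends $[M_i]$ to $[M_i]$, i.e.\ it is exactly the map of the theorem; exactness then gives surjectivity and identifies its kernel with $\operatorname{im}\alpha$.

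The final step would be to compute $\operatorname{im}\alpha$. The group $G_0(\underline\Lambda)$ is free on the classes of the simple $\underline\Lambda$-modules, namely the simple $\Lambda$-modules $S_1,\dots,S_n$ supported away from the vertex $0$ — one for each non-free indecomposable MCM module. Under $\alpha$ followed by $G_0(\Lambda)\cong K_0(\Lambda)$, the class $[S_i]$ goes to the alternating sum of the terms of a finite projective $\Lambda$-resolution of $S_i$, and the key point is that applying $\operatorname{Hom}_R(M,-)$ to the almost split sequence $0\to\tau M_i\to E_i\to M_i\to 0$ yields the exact sequence of $\Lambda$-modules
\[
0\longrightarrow P_{\tau M_i}\longrightarrow P_{E_i}\longrightarrow P_{M_i}\longrightarrow S_i\longrightarrow 0,\qquad P_X:=\operatorname{Hom}_R(M,X):
\]
left exactness is automatic, the cokernel is the simple $S_i$ because $E_i\to M_i$ is right almost split and $\operatorname{End}_R(M_i)$ is local ($R$ Henselian, $M_i$ indecomposable), and $P_{\tau M_i}$ is projective since $\tau M_i$ is again MCM. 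Hence $[S_i]=[P_{M_i}]-[P_{E_i}]+[P_{\tau M_i}]$ in $K_0(\Lambda)$, which under $K_0(\Lambda)=H$ is precisely the Auslander--Reiten relation $[\tau M_i]-[E_i]+[M_i]$. Therefore $\operatorname{im}\alpha$ is the span of the relations listed in the theorem, which finishes the argument.

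I expect the main obstacle to be the interface between the homological algebra of $\Lambda$ and Quillen's machine: verifying that $N\mapsto eN$ genuinely presents $\operatorname{mod} R$ as a Gabriel quotient with the stated Serre kernel and that the localization theorem then delivers the displayed long exact sequence (this is formal but must be done carefully, keeping all categories Noetherian), together with the genuinely representation-theoretic core — the identification of the minimal $\Lambda$-projective resolution of $S_i$ with the image of the almost split sequence, which is where the existence of almost split sequences, the Krull--Schmidt property, and the locality of $\operatorname{End}_R(M_i)$ are all used. A more hands-on route, avoiding higher $K$-theory, would be to prove surjectivity directly from the fact that the $d$-th syzygy of any module is MCM, and then to show separately that every relation arising from a short exact sequence of MCM modules is a $\mathbb{Z}$-combination of almost split relations by inducting on a suitable length function and factoring through almost split sequences; but the localization argument is cleaner and is the one that continues to say something in higher degrees.
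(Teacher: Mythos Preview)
Your argument is correct and is essentially the paper's own proof: the paper works in the functor category $\widehat{\mathcal{C}}$ and then identifies it with $\mathsf{mod}(\Lambda)$, so your idempotent localization $N\mapsto eN$ is exactly the evaluation functor $F\mapsto F(R)$, your Serre subcategory $\mathsf{mod}(\underline{\Lambda})$ is $\widehat{\mathcal{C}}_0$, and your projective resolution of $S_i$ via the AR sequence is the paper's computation of $\alpha$ in Section~\ref{ausl-reit-matr}. The only cosmetic difference is that where you import Leuschke's finite global dimension result to get $K_0(\Lambda)\cong G_0(\Lambda)$, the paper instead proves the needed resolution statement directly from Auslander--Buchweitz approximation (Proposition~\ref{prop1}).
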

\vspace{1.5 mm}
Thus if one knows the Auslander-Reiten sequences in the category of maximal Cohen-Macaulay $R$-modules, the above theorem allows explicit computation of $K_0'(R)$.

Using Auslander and Reiten's techniques, we are able to produce a long exact sequence involving the groups $K_i'(R)$ (see (\ref{eq:3})).
\begin{thm}
\label{sequencetheorem}
Let $R$ be a Henselian CM local ring of finite Cohen-Macaulay type.  Let $I$ be the set of isomorphism classes of indecomposable maximal Cohen-Macaulay $R$-modules, and let $I_0  = I\setminus \{[R]\}$.  Then there is a long exact sequence
\begin{equation}
\label{sequence}
\cdots \rightarrow \bigoplus \limits_{[M] \in I_0} K_i(\kappa _M) \rightarrow K'_i(\Lambda) \rightarrow K_i'(R) \rightarrow \bigoplus \limits_{[M] \in I_0} K_{i-1}(\kappa _M) \rightarrow \cdots
\end{equation}
where $\Lambda = \mathrm{End}_R (\bigoplus \limits_{[M] \in I} M)^{\mathrm{op}}$ and $\kappa _M = (\mathrm{End}_R M)^{\mathrm{op}} / \mathrm{rad}((\mathrm{End}_R M)^{\mathrm{op}})$.
The long exact sequence ends in a presentation
\[ \xymatrix @R=0.9pc{
\bigoplus \limits_{[M] \in I_0}K_i(\kappa _M) \ar[r]	& K'_0(\Lambda) \ar[r]		& K_0'(R) \ar[r]	& 0 \\
\mathbb{Z}^{I_0} \ar@{=}[u]				& \mathbb{Z}^I \ar@{=}[u]	& 			&
} \]
of $K_0'(R)$; it is exactly the one described in Theorem \ref{AR}.
\end{thm}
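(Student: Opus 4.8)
The plan is to realize (\ref{sequence}) as the long exact sequence of a Quillen localization followed by a dévissage, and then to read the asserted presentation off its tail. Write $M_0=\bigoplus_{[M]\in I}M$, so $\Lambda=\mathrm{End}_R(M_0)^{\mathrm{op}}$, and let $e_0\in\Lambda$ be the idempotent projecting $M_0$ onto its free direct summand $R$. Then $e_0\Lambda e_0=\mathrm{End}_R(R)^{\mathrm{op}}=R$, the two-sided ideal $J:=\Lambda e_0\Lambda$ is idempotent, and $\Lambda e_0$ is projective as a left $\Lambda$-module; consequently $\mathcal S:=\{N\in\mathrm{mod}\,\Lambda:JN=0\}=\mathrm{mod}(\Lambda/J)$ is a Serre subcategory of $\mathrm{mod}\,\Lambda$, and $N\mapsto e_0N$ is an exact functor $\mathrm{mod}\,\Lambda\to\mathrm{mod}\,R$ whose kernel is exactly $\mathcal S$.

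The first and most delicate step is to prove that $N\mapsto e_0N$ induces an equivalence $\mathrm{mod}\,\Lambda/\mathcal S\xrightarrow{\ \sim\ }\mathrm{mod}\,R$. The functor is exact and essentially surjective (since $e_0(\Lambda e_0\otimes_RV)=V$), and it has a right adjoint $\mathrm{Hom}_R(e_0\Lambda,-)$ which is fully faithful because $e_0\,\mathrm{Hom}_R(e_0\Lambda,V)=\mathrm{Hom}_R(e_0\Lambda e_0,V)=V$; an exact, essentially surjective functor with a fully faithful right adjoint identifies its target with the Serre quotient by its kernel. One has to be careful here, because $\Lambda e_0$ is in general \emph{not} projective as a right $R$-module, so this does not follow from the familiar recollement attached to a ``split'' idempotent and must be deduced from the adjunction as above. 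Granting the equivalence, Quillen's localization theorem produces a homotopy fiber sequence $K(\mathcal S)\to K(\mathrm{mod}\,\Lambda)\to K(\mathrm{mod}\,\Lambda/\mathcal S)$, that is, $K(\mathcal S)\to K'(\Lambda)\to K'(R)$.

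Next I would compute $K(\mathcal S)$ by dévissage. Since $R$ has finite CM type it is an isolated singularity (Auslander), so $\underline{\mathrm{End}}_R(M_0)$ has finite length over $R$ and $\Lambda/J=\underline{\mathrm{End}}_R(M_0)^{\mathrm{op}}$ is Artinian. Its simple modules are precisely the simple $\Lambda$-modules $S_M$ with $e_0S_M=0$, namely those indexed by $I_0$; and since $P_M:=\mathrm{Hom}_R(M_0,M)$ is the projective cover of $S_M$, one has $\mathrm{End}_\Lambda(S_M)=(e_M\Lambda e_M)/\mathrm{rad}=(\mathrm{End}_RM)^{\mathrm{op}}/\mathrm{rad}=\kappa_M$. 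Quillen's dévissage theorem, applied to the subcategory of semisimple modules inside $\mathrm{mod}(\Lambda/J)$, then gives $K(\mathcal S)\simeq\bigoplus_{[M]\in I_0}K(\kappa_M)$. Substituting this into the fiber sequence and passing to homotopy groups yields precisely (\ref{sequence}), and its tail is $\bigoplus_{[M]\in I_0}K_0(\kappa_M)\xrightarrow{\alpha}K'_0(\Lambda)\xrightarrow{\beta}K'_0(R)\to 0$ since $K_{-1}$ of an abelian category vanishes.

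It remains to match this tail with Theorem \ref{AR}. By a theorem of Leuschke the Auslander algebra $\Lambda$ has finite global dimension, so $K'_0(\Lambda)=K_0(\Lambda)$ is free on the indecomposable projectives $[P_M]$, $[M]\in I$ — this is the identification $K'_0(\Lambda)=\mathbb{Z}^I$. The map $\beta$ is induced by the quotient functor, which we identified with $N\mapsto e_0N$, and $e_0P_M=\mathrm{Hom}_R(R,M)=M$; hence $\beta[P_M]=[M]$, so under $[P_M]\leftrightarrow[M]$ the map $\beta$ is exactly the surjection $H\to K'_0(R)$ of Theorem \ref{AR}. The map $\alpha$ carries the generator at $[M]$ to $[S_M]\in K_0(\Lambda)$, and applying $\mathrm{Hom}_R(M_0,-)$ to the almost split sequence $0\to\tau M\to E_M\to M\to 0$ yields an exact sequence of $\Lambda$-modules $0\to P_{\tau M}\to P_{E_M}\to P_M\to S_M\to 0$: it is exact with cokernel $S_M$ because a morphism $M_0\to M$ lifts along $E_M\to M$ exactly when it is not a split epimorphism, i.e.\ exactly when it lies in $\mathrm{rad}\,P_M$, so the cokernel is $P_M/\mathrm{rad}\,P_M=S_M$. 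Therefore $[S_M]=[P_M]-[P_{E_M}]+[P_{\tau M}]$, whence $\ker\beta=\mathrm{im}\,\alpha=\langle\,[\tau M]-[E_M]+[M]\ :\ [M]\in I_0\,\rangle$ and the presentation $K'_0(R)=\mathbb{Z}^I/\mathrm{im}\,\alpha$ agrees verbatim with the one in Theorem \ref{AR}. Apart from the Serre-quotient identification of the second paragraph, the points needing attention are the two structural facts about $\Lambda$ — that $\Lambda/J$ is Artinian and that $\mathrm{gldim}\,\Lambda<\infty$ — and the ``projectivization'' dictionary identifying $\mathrm{rad}\,P_M$ with the non-split epimorphisms $M_0\to M$, which is exactly what converts almost split sequences into projective presentations of the simple $\Lambda$-modules.
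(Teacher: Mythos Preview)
Your argument is correct and follows the same overall strategy as the paper---Quillen localization for the Serre subcategory of objects killed by evaluation at $R$, followed by d\'evissage---only you work directly in $\mathsf{mod}\,\Lambda$ whereas the paper works in the equivalent functor category $\widehat{\mathcal{C}}$ of finitely presented contravariant functors on $\mathcal{C}$ and translates to $\mathsf{mod}\,\Lambda$ at the end. Under the equivalence $F\mapsto F(L)$ your $\mathcal{S}$ is exactly the paper's $\widehat{\mathcal{C}}_0$, and your functor $N\mapsto e_0N$ is the paper's evaluation $F\mapsto F(R)$. The technical differences are minor: to identify the Serre quotient with $\mathsf{mod}\,R$ you use the fully faithful \emph{right} adjoint $\mathrm{Hom}_R(e_0\Lambda,-)\cong\mathrm{Hom}_R(L,-)$, while the paper uses Auslander's criterion (Lemma~\ref{auslandercriterion}) with the section $s$ built from projective presentations, which is the \emph{left} adjoint $\Lambda e_0\otimes_R-$; for d\'evissage you observe that $\Lambda/J=\underline{\mathrm{End}}_R(L)^{\mathrm{op}}$ is Artinian, while the paper argues directly (Theorem~\ref{thm3}) that each object of $\widehat{\mathcal{C}}_0$ has finite length; and for $K'_0(\Lambda)=\mathbb{Z}^I$ you cite Leuschke's finite-global-dimension theorem, while the paper obtains the same fact via Auslander--Buchweitz approximation and the resolution theorem (Proposition~\ref{prop1}). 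Each pair of arguments is essentially equivalent; your packaging is a bit more economical, at the cost of importing the isolated-singularity and finite-global-dimension results from the literature rather than reproving them in situ.
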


In the case when $R = S/(w)$ is a hypersurface singularity of finite CM type, we derive a similar long exact sequence involving the algebraic $K$-groups of the category $\mathsf{MF}$ of matrix factorizations with potential $w$; see Remark \ref{mf}.

In section \ref{ausl-reit-matr}, we study the maps $\alpha _i:\bigoplus K_i(\kappa _M) \rightarrow K_i'(\Lambda)$ appearing in (\ref{sequence}).  Not surprisingly, we find a relationship between these higher maps and the matrix $T$ defining the map $\alpha _0:\mathbb{Z}^{I_0} = \bigoplus K_0(\kappa _M) \rightarrow K_0'(\Lambda) = \mathbb{Z}^{I}$.  This matrix $T$ was first studied in \cite{AusRei86}.

In Section \ref{an-example}, we apply the results of the previous sections to understand $K_1'(R)$ and $K_1(\mathsf{MF})$ explicitly when $R$ is a one-dimensional singularity of type $A_{2n}$.  For this computation, we need to know more about the term $K_1'(\Lambda)$ in the sequence; to this end we study $K_1$ of a Krull-Schmidt additive category.  Our work in this direction appears in Appendix \ref{loca-for-a-krul-schm-cate}.

I would like to thank my advisor Christian Haesemeyer for his guidance and support; without him this work would not have been possible.  I am grateful to the referee for catching a serious error in the original version of this paper.  Also I thank Daniel Grayson for his assistance.

\section{Preliminaries}
\label{preliminaries}

All categories in this paper happen to be additive, and all functors between them are assumed to be additive.  All modules over a ring are left modules.  All spectra are viewed as objects in the stable homotopy category $\mathrm{Ho}(\mathsf{Sp})$, and maps between spectra are viewed as morphisms in this category.  This is useful because $\mathrm{Ho}(\mathsf{Sp})$ is a triangulated category, and the stable homotopy functor $\mathrm{Ho}(\mathsf{Sp}) \rightarrow (\mathrm{abelian\ groups})$ is a homological functor.

If $\mathcal{A}$ is an additive category and $M_1,\ldots ,M_n \in \mathcal{A}$, $\mathsf{add}(M_1,\ldots ,M_n)$ will denote the full subcategory of $\mathcal{A}$ consisting of direct summands of direct sums of $M_1,\ldots ,M_n$; in particular $\mathsf{add} (M_1 \oplus \cdots \oplus M_n) = \mathsf{add}(M_1,\ldots ,M_n)$.  $\mathrm{rad}(\mathcal{A})$ will denote the Jacobson radical of $\mathcal{A}$; this is a two-sided ideal of $\mathcal{A}$ and can be defined by
\[
\mathrm{Hom}_{\mathrm{rad}(\mathcal{A})}(X,Y)=\{f \in \mathrm{Hom}_{\mathcal{A}}(X,Y)|fg \in \mathrm{rad}(\mathrm{End}_{\mathcal{A}}Y)\mathrm{\ for\ all\ }g \in \mathrm{Hom}_{\mathcal{A}}(Y,X)\}.
\]

Given a collection of additive categories $\{\mathcal{A}_i\}_{i \in I}$, we define their direct sum $\bigoplus \mathcal{A}_i$ to be the additive category whose objects are $I$-tuples $X=(X_i)_{i \in I}$ with  $X_i$ in $\mathcal{A}_i$ for all $i$ and $X_i \neq 0$ for only finitely many $i$.  Morphisms are defined by $\mathrm{Hom}_{\bigoplus \mathcal{A}_i}(X,Y)=\bigoplus \mathrm{Hom}_{\mathcal{A}_i}(X_i,Y_i)$.

\begin{defn}
A nonzero object $A$ of an additive category $\mathcal{A}$ is called \emph{indecomposable} if whenever $A \cong A' \oplus A''$, $A' \cong 0$ or $A'' \cong 0$.  Denote by $\mbox{ind}(\mathcal{A})$ the set of isomorphism classes of indecomposable objects in $\mathcal{A}$.
\end{defn}
\begin{defn}
We call the additive category $\mathcal{A}$ \emph{Krull-Schmidt} if any object of $\mathcal{A}$ is a finite direct sum of objects with local endomorphism rings.
\end{defn}
By the Krull-Schmidt theorem, every object in a Krull-Schmidt category can be written uniquely as a finite direct sum of indecomposables \cite[3.2.1]{Kra12}.

\begin{defn}
An \emph{exact category} is an additive category $\mathcal{E}$ together with a distinguished class of sequences $\xymatrix{M'\  \ar@{>->}[r] & M \ar@{>>}[r] & M''}$, called \emph{conflations}, such that there is a fully faithful additive functor $f$ from $\mathcal{E}$ into an abelian category $\mathcal{A}$ satisfying the following two properties:
\begin{itemize}
\item[1.]
$f$ reflects exactness; that is, $\xymatrix{M'\  \ar@{>->}[r] & M \ar@{>>}[r] & M''}$ is a conflation in $\mathcal{E}$ if and only if $0 \rightarrow f(M') \rightarrow f(M) \rightarrow f(M'') \rightarrow 0$ is a short exact sequence in $\mathcal{A}$.
\item[2.]
$\mathcal{E}$ is closed under extensions in $\mathcal{A}$; that is, if $0 \rightarrow f(M') \rightarrow A \rightarrow f(M'') \rightarrow 0$ is exact in $\mathcal{A}$, then $A \cong f(M)$ for some $M$ in $\mathcal{E}$.
\end{itemize}
We say $\mathcal{E}'$ is an \emph{exact subcategory} of $\mathcal{E}$ if $\mathcal{E}'$ and $\mathcal{E}$ are exact categories, $\mathcal{E}'$ is a subcategory of $\mathcal{E}$, the inclusion functor $\mathcal{E}' \rightarrow \mathcal{E}$ reflects exactness, and $\mathcal{E}'$ is closed under extensions in $\mathcal{E}$.
\end{defn}
If each $\mathcal{E}_i$ is an exact category, we endow $\bigoplus \mathcal{E}_i$ with the structure of exact category by setting the conflations to be the sequences which are coordinate-wise conflations.  This direct sum is then the coproduct with respect to exact functors between exact categories.

If $R$ is a ring, the category $\mathsf{mod}(R)$ of finitely presented $R$-modules is an exact category in which the conflations are the short exact sequences.  Similarly, the category $\mathsf{proj}(R)$ of finitely generated projective $R$-modules is an exact category with short exact sequences for conflations.  Note that every conflation in $\mathsf{proj}(R)$ is split.

A subcategory $w\mathcal{E}\subset \mathcal{E}$ of an exact category $\mathcal{E}$ is called a \emph{subcategory of weak equivalences} if it contains all objects of $\mathcal{E}$ and all isomorphisms in $\mathcal{E}$ and satisfies Waldhausen's Gluing Lemma \cite[1.2]{Wal83}.  We define the $K$-theory of an exact category $\mathcal{E}$ relative to a subcategory of weak equivalences $w\mathcal{E} \subset \mathcal{E}$ using Waldhausen's $S_{\cdot}$-construction \cite[1.3]{Wal83}.  It is denoted simply by $K(\mathcal{E})$, and it is an $\Omega$-spectrum whose $n$th space is
\[
K(\mathcal{E})_n = \Omega |wS_{\cdot}^n\mathcal{E}|.
\]
If no subcategory of weak equivalences is specified, the $K$-theory of an exact category $\mathcal{E}$ is taken relative to the subcategory $i\mathcal{E}$ of isomorphisms.

Set $K_n(\mathcal{E}) = \pi _n (K(\mathcal{E}))$.  For a ring $R$, set
\begin{align*}
K(R) & := K(\mathsf{proj}(R))	& K_i(R) & := \pi _i(K(R)) \\
K'(R) & := K(\mathsf{mod}(R))	& K_i'(R) & := \pi _i(K'(R))
\end{align*}
Then $K_0(R)$ is the usual Grothendieck group of $R$.  If $R$ is a commutative local ring, $K_1(R) \cong R^{\times}$.
\section{The Long Exact Sequence}
\label{long-exact-sequence}
Fix a Henselian Cohen-Macaulay local ring $R$ with maximal ideal $\mathfrak{m}$.  Assume also that $R$ has a canonical module, and that $R$ is an isolated singularity, in the sense that $R_{\mathfrak{p}}$ is regular for nonmaximal primes $\mathfrak{p}$.  As $R$ is Henselian local, $\mathsf{mod}(R)$ is a Krull-Schmidt category (\cite[Lemma 13]{Vam90}, \cite{Sid90}).

\begin{defn}
\label{sec:main-result-1}
A finitely generated $R$-module $M$ is called \emph{maximal Cohen-Macaulay} if it satisfies the following two equivalent conditions:
\begin{itemize}
\item[1.]
$\mbox{depth } M = \mbox{dim } R$
\item[2.]
$\mathrm{Ext}_R^i(R/\mathfrak{m},M) = 0$ for $i < \mathrm{dim}(R)$
\end{itemize}
Denote by $\mathcal{C}$ the exact subcategory of $\mathsf{mod}(R)$ consisting of the maximal Cohen-Macaulay modules.  Let $\mathcal{C}^{\oplus}$ be the same category as $\mathcal{C}$ but with a different exact structure: the conflations in $\mathcal{C}^{\oplus}$ are the split short exact sequences.
\end{defn}

Since $\mathcal{C}$ is closed under summands and $\mathsf{mod}(R)$ is Krull-Schmidt, $\mathcal{C}$ also is Krull-Schmidt.  In addition, using \ref{sec:main-result-1}.2, one can easily prove that $\mathcal{C}$ satisfies the following.

\begin{lem}
\label{sec:main-result-2}
Suppose
\[ \xymatrix{
0 \ar[r]	& M' \ar[r]	& M \ar[r]	& M'' \ar[r]	& 0
} \]
is a short exact sequence of $R$-modules.
\begin{itemize}
\item[1.]
If $M'$ and $M''$ are in $\mathcal{C}$, then so is $M$.
\item[2.]
If $M$ and $M''$ are in $\mathcal{C}$, then so is $M'$.
\end{itemize}
\end{lem}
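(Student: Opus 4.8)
The plan is to apply $\mathrm{Hom}_R(R/\mathfrak{m},-)$ to the given short exact sequence and extract both statements from the resulting long exact sequence of Ext-modules, using characterization \ref{sec:main-result-1}.2 of membership in $\mathcal{C}$. Write $d=\dim R$. If $d=0$ then condition \ref{sec:main-result-1}.2 is vacuous, so $\mathcal{C}=\mathsf{mod}(R)$ and there is nothing to prove; so assume $d\ge 1$. The first step is to record the long exact sequence
\[ \cdots \to \mathrm{Ext}_R^{i}(R/\mathfrak{m},M') \to \mathrm{Ext}_R^{i}(R/\mathfrak{m},M) \to \mathrm{Ext}_R^{i}(R/\mathfrak{m},M'') \to \mathrm{Ext}_R^{i+1}(R/\mathfrak{m},M') \to \cdots \]
associated to $0\to M'\to M\to M''\to 0$.

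For part 1, the hypotheses say $\mathrm{Ext}_R^{i}(R/\mathfrak{m},M')=0=\mathrm{Ext}_R^{i}(R/\mathfrak{m},M'')$ for all $i<d$; since $\mathrm{Ext}_R^{i}(R/\mathfrak{m},M)$ sits between these two terms in the sequence, it vanishes for $i<d$, which is exactly condition \ref{sec:main-result-1}.2 for $M$. For part 2, the hypotheses say $\mathrm{Ext}_R^{i}(R/\mathfrak{m},M)=0=\mathrm{Ext}_R^{i}(R/\mathfrak{m},M'')$ for $i<d$. Fixing $i<d$ and examining the segment $\mathrm{Ext}_R^{i-1}(R/\mathfrak{m},M'')\to \mathrm{Ext}_R^{i}(R/\mathfrak{m},M')\to \mathrm{Ext}_R^{i}(R/\mathfrak{m},M)$, the right-hand term vanishes because $i<d$ and the left-hand term vanishes because $i-1<d$, so $\mathrm{Ext}_R^{i}(R/\mathfrak{m},M')=0$; hence $M'\in\mathcal{C}$.

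The argument is purely formal once the equivalence of conditions \ref{sec:main-result-1}.1 and \ref{sec:main-result-1}.2 is taken for granted, so I do not anticipate a genuine obstacle; the only points needing a moment's attention are the degenerate case $d=0$ and the index shift in part 2 (one must observe that $i<d$ implies $i-1<d$). Alternatively one could argue directly with depths, using the standard depth inequalities for a short exact sequence — $\mathrm{depth}\,M\ge\min\{\mathrm{depth}\,M',\mathrm{depth}\,M''\}$ for part 1 and $\mathrm{depth}\,M'\ge\min\{\mathrm{depth}\,M,\mathrm{depth}\,M''+1\}$ for part 2 — together with $\mathrm{depth}\le\dim R$; but the Ext formulation above seems the most economical.
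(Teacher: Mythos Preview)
Your proof is correct and is exactly the argument the paper has in mind: the paper omits the proof entirely, saying only that ``using \ref{sec:main-result-1}.2, one can easily prove'' the lemma, which is precisely the long-exact-sequence-in-$\mathrm{Ext}$ argument you wrote out.
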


\begin{defn}
Let $\widetilde{\mathcal{C}}$ be the abelian category of contravariant additive functors from $\mathcal{C}$ to the category of abelian groups.  Let $\widehat{\mathcal{C}}$ be the full subcategory of $\widetilde{\mathcal{C}}$ consisting of functors $F$ which fit into an exact sequence
\[ \xymatrix{
\mathrm{Hom}_{\mathcal{C}}(-,M) \ar[r]	& \mathrm{Hom}_{\mathcal{C}}(-,M') \ar[r]	& F \ar[r]	& 0
} \]
with $M$ and $M'$ in $\mathcal{C}$; such functors are called \emph{finitely presented}.  $\widehat{\mathcal{C}}$ is an abelian category \cite[4.19]{Yos90}; kernels and cokernels are computed pointwise, as in $\widetilde{\mathcal{C}}$.
\end{defn}

For the next proposition we will need Quillen's resolution theorem.  For convenience we reproduce it here, in a form that is useful for us.
\begin{thm}[{\cite[Cor. 1 to Thm. 3]{Qui73}}]
\label{resolution}
Let $\mathcal{E}$ be an exact category and $\mathcal{P} \subset \mathcal{E}$ an exact subcategory.  Suppose that
\begin{itemize}
\item[1.]
for any conflation $\xymatrix{M\  \ar@{>->}[r] & P \ar@{>>}[r] & P'}$ with $P$ and $P'$ in $\mathcal{P}$, $M$ is isomorphic to an object in $\mathcal{P}$; and
\item[2.]
every object of $\mathcal{E}$ has a $\mathcal{P}$-resolution of finite length.
\end{itemize}
Then the inclusion functor induces a homotopy equivalence $K(\mathcal{P}) \simeq K(\mathcal{E})$.
\end{thm}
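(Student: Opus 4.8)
The plan is to deduce this from Quillen's original argument, following \cite{Qui73}, by filtering $\mathcal{E}$ according to resolution dimension. For $n \ge 0$, let $\mathcal{P}_n \subseteq \mathcal{E}$ be the full subcategory of objects admitting a $\mathcal{P}$-resolution of length $\le n$; then $\mathcal{P} = \mathcal{P}_0 \subseteq \mathcal{P}_1 \subseteq \cdots$, and hypothesis 2 says $\mathcal{E} = \bigcup_n \mathcal{P}_n$. The first task is to check that each $\mathcal{P}_n$ is an exact subcategory of $\mathcal{E}$ with good closure properties: closure under extensions comes from the horseshoe lemma, while the statement that the kernel of an admissible epimorphism between objects of $\mathcal{P}_n$ again lies in $\mathcal{P}_n$ is proved by induction on $n$, with the base case $n=0$ being exactly hypothesis 1 and the inductive step a dimension-shift (Schanuel-type) comparison of syzygies. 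Since Waldhausen's $S_{\cdot}$-construction commutes with filtered colimits of exact categories, $K(\mathcal{E}) \simeq \varinjlim_n K(\mathcal{P}_n)$; hence it suffices to prove that each inclusion $\mathcal{P}_{n-1} \hookrightarrow \mathcal{P}_n$ is a $K$-theory equivalence, and then iterate and pass to the colimit.

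The bridge between consecutive stages is the resolution lemma: every $M \in \mathcal{P}_n$ fits in a conflation $M' \rightarrowtail P \twoheadrightarrow M$ with $P \in \mathcal{P}$ and $M' \in \mathcal{P}_{n-1}$ (take the first syzygy of a length-$n$ resolution), and conversely any $M$ admitting such a conflation lies in $\mathcal{P}_n$. Hypothesis 1 is precisely what guarantees that the syzygy $M'$ is again an object of $\mathcal{E}$ sitting in the lower stratum of the filtration, rather than escaping $\mathcal{P}$ entirely.

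The homotopy-theoretic core --- and the step I expect to be the main obstacle --- is the equivalence $K(\mathcal{P}_{n-1}) \simeq K(\mathcal{P}_n)$. For this I would pass to Quillen's $Q$-construction and apply Theorem A to the inclusion $Q\mathcal{P}_{n-1} \to Q\mathcal{P}_n$: the content is that for each object $M$ of $\mathcal{P}_n$, the relevant comma category --- built from admissible layers of $M$ by objects of $\mathcal{P}_{n-1}$ --- has contractible nerve. The resolution lemma makes this category nonempty and directed enough that one can write down an explicit contraction, zig-zagging through a fixed resolution of $M$; verifying that these homotopies are compatible throughout $Q$ is the delicate combinatorial part. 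Granting the contractibility, Theorem A yields a homotopy equivalence $BQ\mathcal{P}_{n-1} \simeq BQ\mathcal{P}_n$, hence an equivalence of $K$-theory spectra, completing the induction and therefore the proof. (Within the $S_{\cdot}$-framework used in this paper one could alternatively try to deduce the consecutive step from the additivity theorem applied to the exact functor carrying $M$ to a chosen length-one resolution $M' \rightarrowtail P \twoheadrightarrow M$, but the essential input --- contractibility of a space of resolutions --- is of the same nature.)
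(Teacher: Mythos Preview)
The paper does not give its own proof of this statement: it is quoted from Quillen (\cite[Cor.~1 to Thm.~3]{Qui73}) and reproduced only for the reader's convenience, with no argument supplied. So there is nothing in the paper to compare your proposal against.

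That said, your sketch is a faithful outline of Quillen's original argument. The filtration $\mathcal{P} = \mathcal{P}_0 \subset \mathcal{P}_1 \subset \cdots$ by $\mathcal{P}$-resolution dimension, the horseshoe/Schanuel verification that each $\mathcal{P}_n$ is an extension-closed subcategory with the needed kernel-closure property (the base case being precisely hypothesis~1), and the reduction to contractibility of comma categories in $Q\mathcal{P}_n$ via Theorem~A are exactly the ingredients Quillen uses. One small point of friction: the paper defines $K$-theory via Waldhausen's $S_{\cdot}$-construction, while you pass to the $Q$-construction for the core step; you would need either to invoke the comparison $\Omega|iS_{\cdot}\mathcal{E}| \simeq \Omega BQ\mathcal{E}$ for exact categories, or to carry out the analogue of the Theorem~A step directly in the $S_{\cdot}$-framework (as you hint parenthetically, additivity is one route). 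Either way the substance is the same, and since the paper treats the result as a black box there is no discrepancy to report.
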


\begin{prop}
\label{prop1}
The Yoneda functor $h:\mathcal{C}^{\oplus} \rightarrow \widehat{\mathcal{C}}$ is a $K$-theory equivalence.
\end{prop}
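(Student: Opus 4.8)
The plan is to recognize $h$ as, up to equivalence, the inclusion of the subcategory $\mathsf{proj}(\widehat{\mathcal{C}})$ of projective objects into $\widehat{\mathcal{C}}$, and then to apply Quillen's resolution theorem (Theorem~\ref{resolution}).

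First I would check that $h$ is an exact equivalence of $\mathcal{C}^{\oplus}$ onto $\mathsf{proj}(\widehat{\mathcal{C}})$. Writing $h(M)=\mathrm{Hom}_{\mathcal{C}}(-,M)$, the functor $h$ is fully faithful by the Yoneda lemma, which also supplies a natural isomorphism $\mathrm{Hom}_{\widehat{\mathcal{C}}}(h(M),F)\cong F(M)$; since kernels and cokernels in $\widehat{\mathcal{C}}$ are computed pointwise, the evaluation functor $F\mapsto F(M)$ is exact, so every representable $h(M)$ is a projective object of $\widehat{\mathcal{C}}$. Conversely $\widehat{\mathcal{C}}$ has enough projectives, because by definition each $F$ is a quotient of some $h(M)$; hence an arbitrary projective $P$ is a direct summand of a representable $h(M)$, and as $\mathcal{C}$ is Krull--Schmidt it is idempotent complete, so the idempotent of $\mathrm{End}_{\mathcal{C}}(M)\cong\mathrm{End}_{\widehat{\mathcal{C}}}(h(M))$ cutting out $P$ splits in $\mathcal{C}$ and shows $P\cong h(M')$ for a summand $M'$ of $M$. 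Thus the essential image of $h$ is exactly $\mathsf{proj}(\widehat{\mathcal{C}})$. Finally, $h$ sends the split conflations of $\mathcal{C}^{\oplus}$ to split short exact sequences in $\widehat{\mathcal{C}}$, and every conflation of $\widehat{\mathcal{C}}$ between projective objects splits (its epimorphism has projective target), so the exact structure that $\mathsf{proj}(\widehat{\mathcal{C}})$ inherits from $\widehat{\mathcal{C}}$ is the split one; therefore $h\colon\mathcal{C}^{\oplus}\to\mathsf{proj}(\widehat{\mathcal{C}})$ is an equivalence of exact categories, hence a $K$-theory equivalence.

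Next I would apply Theorem~\ref{resolution} with $\mathcal{E}=\widehat{\mathcal{C}}$ and $\mathcal{P}=\mathsf{proj}(\widehat{\mathcal{C}})$. Hypothesis~1 is immediate: in a conflation $M\rightarrowtail P\twoheadrightarrow P'$ with $P$ and $P'$ projective the epimorphism splits, so $M$ is a summand of $P$ and hence projective. Hypothesis~2 asks that every $F\in\widehat{\mathcal{C}}$ have a finite resolution by projectives, i.e.\ that $\mathrm{pd}_{\widehat{\mathcal{C}}}(F)<\infty$. Granting this, the resolution theorem produces a homotopy equivalence $K(\mathsf{proj}(\widehat{\mathcal{C}}))\simeq K(\widehat{\mathcal{C}})$, and composing with the equivalence of the previous step shows that $h$ is a $K$-theory equivalence.

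The one substantive point is therefore Hypothesis~2, and this is exactly where the standing assumptions --- that $R$ be an isolated singularity with a canonical module --- are needed: they force $\widehat{\mathcal{C}}$ to have finite global dimension. This is most transparent when $R$ has finite Cohen--Macaulay type, for then $\mathcal{C}=\mathsf{add}(M)$ with $M$ the direct sum of the indecomposable maximal Cohen--Macaulay modules, $\widehat{\mathcal{C}}$ is equivalent to the category of finitely generated modules over the Auslander algebra $\Lambda=\mathrm{End}_R(M)^{\mathrm{op}}$, and $\Lambda$ has finite global dimension, so every object of $\widehat{\mathcal{C}}$ has finite projective dimension. In the general case the finiteness of $\mathrm{gl.dim}\,\widehat{\mathcal{C}}$ for an isolated singularity is a known consequence of Auslander--Buchweitz theory: the isolated-singularity hypothesis gives almost split sequences in $\mathcal{C}$, and hence finite projective resolutions of the simple objects of $\widehat{\mathcal{C}}$ by representables, while the canonical module furnishes the maximal Cohen--Macaulay approximations with which one bounds $\mathrm{pd}_{\widehat{\mathcal{C}}}(F)$ in general.
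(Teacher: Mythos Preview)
Your overall strategy---identify the essential image of $h$ with $\mathsf{proj}(\widehat{\mathcal{C}})$ and invoke the resolution theorem---is exactly the paper's. Your packaging of Hypothesis~1 via ``projectives are closed under kernels of epis between projectives'' is a tidy reformulation of the paper's explicit splitting computation.

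The gap is in Hypothesis~2. You do not prove that every $F\in\widehat{\mathcal{C}}$ has finite projective dimension; you appeal to it as known, and the sketch you give is partly off target. Note first that Proposition~\ref{prop1} is stated under the standing hypotheses of the section (Henselian CM, canonical module, isolated singularity) and \emph{not} under finite CM type, so the Auslander-algebra argument is unavailable. More importantly, your general-case sketch routes through almost split sequences to get length-$2$ resolutions of the \emph{simple} functors $S_M$; but passing from simples to arbitrary $F$ would require a finite composition series for $F$, which is precisely Theorem~\ref{thm3} and uses finite CM type. So that branch does not establish Hypothesis~2 in the generality needed. Likewise the isolated-singularity hypothesis plays no role in the paper's proof of this proposition.

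What the paper actually does for Hypothesis~2 is a direct depth argument, and you should supply it. Auslander--Buchweitz approximation (using the canonical module) produces for any $F$ a resolution
\[
\cdots \xrightarrow{h(f_2)} h(M_1)\xrightarrow{h(f_1)} h(M_0)\longrightarrow F\longrightarrow 0
\]
by representables. Evaluating at $R$ yields an exact sequence of $R$-modules $\cdots \to M_1\to M_0\to F(R)\to 0$; chasing depth through the short exact sequences $0\to \mathrm{im}\,f_{n+1}\to M_n\to \mathrm{im}\,f_n\to 0$ shows $\mathrm{im}\,f_{d}$ is maximal Cohen--Macaulay once $d\ge\dim R$, and then $0\to h(\mathrm{im}\,f_{d})\to h(M_{d-1})\to\cdots\to h(M_0)\to F\to 0$ is the desired finite projective resolution. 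This is the ``MCM approximations bound $\mathrm{pd}$'' step you allude to, but it needs to be written out.
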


\begin{proof}
We verify the hypotheses of the resolution theorem.  We need to show that the essential image of $h$ is closed under extensions.  Suppose
\[ \xymatrix{
0 \ar[r]	& h(A) \ar[r]	& F \ar[r]^-p	& h(B) \ar[r]	& 0
} \]
is exact in $\widehat{\mathcal{C}}$, and choose a lift $x \in F(B)$ of $\mathrm{id}_B \in h(B)(B)$.  Define $i:h(B) = \mathrm{Hom}_{\mathcal{C}}(-,B) \rightarrow F$ by $i_C(f) = F(f)(x)$, for $f:C \rightarrow B$.  Then $i$ splits $p$, so $F$ is isomorphic to $h(A) \oplus h(B) \cong h(A \oplus B)$.

We need also to show that if
\[ \xymatrix{
0 \ar[r]	& F \ar[r]	& h(A) \ar[r]^p	& h(B) \ar[r]	& 0
} \]
is exact in $\widehat{\mathcal{C}}$, then $F$ is in the essential image of $h$.  As before, one can find a section of $p$, which must be of the form $h(i)$ for some split monomorphism $i:B \rightarrow A$.  Then $F \cong h(\mathrm{coker}(i))$.

Last, we must show that every functor in $\widehat{\mathcal{C}}$ has a finite resolution by representable functors.  By Auslander-Buchweitz approximation \cite{AusBuc89}, for every $X \in \mathsf{mod}(R)$ there is a map $f:M \rightarrow X$ with $M$ in $\mathcal{C}$ such that $\mathrm{Hom}(-,f):\mathrm{Hom}(-,M) \rightarrow \mathrm{Hom}(-,X)$ is an epimorphism between functors in $\widetilde{\mathcal{C}}$.  By repeatedly taking approximations to syzygies, we find that any $F$ in $\widehat{\mathcal{C}}$ has a resolution
\[ \xymatrix{
\cdots \ar[r]^{h(f_2)}	& h(M_1) \ar[r]^{h(f_1)}	& h(M_0) \ar[r]	& F \ar[r]	& 0
} \]
with $M_i$ in $\mathcal{C}$ for $i \geq 0$.  Evaluating this sequence at $R$ produces an exact sequence of $R$-modules
\[ \xymatrix{
\cdots \ar[r]^{f_2}	& M_1 \ar[r]^{f_1}	& M_0 \ar[r]^{f_0}	& F(R) \ar[r]	& 0.
} \]
By the exact sequences
\[ \xymatrix{
0 \ar[r]	& \mathrm{im\,} f_{n+1} \ar[r]	& M_n \ar[r]	& \mathrm{im\,} f_n \ar[r]	& 0
} \]
it follows that whenever $n \geq \mathrm{dim\,} R$, $\mathrm{Ext}^i_R(R/\mathfrak{m},\mathrm{im}\,f_n) = 0$ for $i < \mathrm{dim\,}R$.  Therefore $\mathrm{im}\,f_{\mathrm{dim}(R)}$ is maximal Cohen-Macaulay and
\[ \xymatrix{
0 \ar[r]	& h(\mathrm{im}\,f_{\mathrm{dim}(R)}) \ar[r]	& h(M_{\mathrm{dim}(R)-1}) \ar[r]	& \cdots \ar[r]	& h(M_0) \ar[r]	& F \ar[r]	& 0
} \]
is a resolution of $F$ by representable functors.
\end{proof}

Recall (\cite[I.2]{Swa68}) that if $\mathcal{A}$ is a well-powered abelian category and $\mathcal{B} \subset \mathcal{A}$ a Serre subcategory, there is an abelian category $\mathcal{A}//\mathcal{B}$ (usually denoted $\mathcal{A}/\mathcal{B}$, but we reserve this notation for the additive quotient), together with an exact functor $\mathcal{A} \rightarrow \mathcal{A}//\mathcal{B}$ which is universal for exact functors out of $\mathcal{A}$ which vanish on $\mathcal{B}$.  The following criterion for recognizing the quotient category is from \cite[I.1.6]{AusRei86}; we omit the proof.

\begin{lem}
\label{auslandercriterion}
Let $e:\mathcal{A} \rightarrow \mathcal{D}$ be an exact functor between well-powered abelian categories, and let $p:\mathcal{A} \rightarrow \mathcal{A}//\mathrm{ker}\,e$ be the canonical functor.  Suppose there is an additive functor $s:\mathcal{D} \rightarrow \mathcal{A}$ such that $es = \mathrm{id}_{\mathcal{D}}$.  Then $ps:\mathcal{D} \rightarrow \mathcal{A}//\mathrm{ker}\,e$ is an exact functor and an inverse isomorphism to the functor $\overline{e}:\mathcal{A}//\mathrm{ker}\,e \rightarrow \mathcal{D}$ induced by $e$.
\end{lem}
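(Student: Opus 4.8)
The plan is to show that $\overline{e}$ is an equivalence of categories with quasi-inverse $ps$. Because $e$ is exact, $\ker e$ is a Serre subcategory and $e$ vanishes on it, so the universal property of the quotient gives the exact functor $\overline{e}$ with $\overline{e}\,p=e$; then $\overline{e}\circ(ps)=(\overline{e}\,p)\,s=es=\mathrm{id}_{\mathcal{D}}$ strictly. In particular $\overline{e}$ is essentially surjective, and as soon as we know $\overline{e}$ is fully faithful it is an equivalence; since a quasi-inverse is unique up to isomorphism and $\overline{e}\circ(ps)=\mathrm{id}_{\mathcal{D}}$, the functor $ps$ is then forced to be a quasi-inverse, so $(ps)\circ\overline{e}\cong\mathrm{id}_{\mathcal{A}//\ker e}$ comes for free. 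Thus the real content is threefold: $ps$ is exact, $\overline{e}$ is faithful, and $\overline{e}$ is full.

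For exactness of $ps$ I would start from a short exact sequence $0\to D'\to D\to D''\to 0$ in $\mathcal{D}$; applying the additive functor $s$ gives a complex $0\to sD'\to sD\to sD''\to 0$ in $\mathcal{A}$ (the composite of consecutive maps is $s(0)=0$), and applying $e$ gives back the original exact sequence because $es=\mathrm{id}$. Since $e$ is exact, the kernel of $sD'\to sD$, the cokernel of $sD\to sD''$, and the homology of the complex at $sD$ all have zero image under $e$, hence lie in $\ker e$; applying the exact functor $p$, which annihilates $\ker e$, therefore yields a short exact sequence, so $ps$ is exact. For faithfulness of $\overline{e}$ I would use the standard description of morphisms in the Serre quotient: a map $pX\to pY$ is represented by an honest morphism $f\colon X'\to Y/Y'$ in $\mathcal{A}$ with $X/X'\in\ker e$ and $Y'\in\ker e$, and $\overline{e}$ carries it to $(\text{iso})\circ e(f)\circ(\text{iso})$; if this is zero then $e(f)=0$, so $\mathrm{im}\,f\in\ker e$ by exactness of $e$, so $f$ factors through an object killed by $p$, whence the original morphism vanishes.

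The crux — the one genuinely non-formal step, and the one I expect to be the main obstacle — is fullness of $\overline{e}$, equivalently the statement that each object $X$ is canonically isomorphic to $s(e(X))$ in $\mathcal{A}//\ker e$; said differently, that the exact functor $p\circ s\circ e\colon\mathcal{A}\to\mathcal{A}//\ker e$ is naturally isomorphic to $p$. This is where the section $s$ must be used in an essential way: one has to exhibit a natural comparison morphism between $X$ and $s(e(X))$ in $\mathcal{A}$ whose image under $e$ is invertible — equivalently, whose kernel and cokernel lie in $\ker e$ — so that it becomes an isomorphism in the quotient; that it really is an isomorphism there is then guaranteed because the exact faithful functor $\overline{e}$ reflects isomorphisms. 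Granting such a natural isomorphism $\eta\colon\mathrm{id}\xrightarrow{\ \sim\ }s\,e$ in $\mathcal{A}//\ker e$, fullness follows at once, since a morphism $\psi\colon eX\to eY$ is sent by $s$ to $s\psi\colon seX\to seY$ and $\eta_Y^{-1}\circ p(s\psi)\circ\eta_X\colon pX\to pY$ maps under $\overline{e}$ back to $\psi$. Putting the pieces together then shows $\overline{e}$ is an equivalence with quasi-inverse $ps$, as claimed, and I would expect producing the comparison isomorphism $\eta$ — the only point where the precise nature of the section $s$ (beyond $es=\mathrm{id}$) can be expected to enter — to absorb essentially all of the difficulty.
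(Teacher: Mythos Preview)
The paper does not prove this lemma; it cites \cite[I.1.6]{AusRei86} and explicitly says ``we omit the proof,'' so there is no argument in the paper to compare yours against.

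That said, your outline is correct on the routine parts: the verification that $ps$ is exact and that $\overline{e}$ is faithful go through as you wrote them, and your reduction of fullness to producing, for each $X$, an isomorphism $pX\cong p(seX)$ in $\mathcal{A}//\ker e$ lying over $\mathrm{id}_{eX}$ is the natural strategy. But your suspicion that ``the precise nature of the section $s$ (beyond $es=\mathrm{id}$)'' must enter is not a defect of your approach---it is a genuine obstruction. With only the stated hypotheses the conclusion is false: take $\mathcal{A}$ to be finite-dimensional $\mathbb{Z}$-graded $k$-vector spaces, $\mathcal{D}$ finite-dimensional $k$-vector spaces, $e$ the total-space functor $(V_n)\mapsto\bigoplus_n V_n$, and $s$ the functor placing a space in degree zero. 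Then $e$ is exact, $es=\mathrm{id}_{\mathcal{D}}$, and $\ker e=0$, so $p=\mathrm{id}_{\mathcal{A}}$ and $\overline{e}=e$; but $e$ is not full (there are no nonzero graded maps from $k$ concentrated in degree $0$ to $k$ concentrated in degree $1$, while $\mathrm{Hom}_k(k,k)\neq 0$), hence not an equivalence. So no argument using only $es=\mathrm{id}$ can close the gap you identified, and the comparison map you seek need not exist in general.

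In the paper's single application (Theorem~\ref{thm2}) your strategy does succeed: for each $F\in\widehat{\mathcal{C}}$ the chosen projective presentation of $F(R)$ yields an explicit morphism $s(eF)\to F$ in $\widehat{\mathcal{C}}$ whose value at $R$ is an isomorphism, so its kernel and cokernel lie in $\ker e$ and it becomes invertible in the quotient. The clean fix is therefore to add to the lemma the hypothesis that for each $X\in\mathcal{A}$ there is a morphism between $seX$ and $X$ with kernel and cokernel in $\ker e$; with that in hand your argument for fullness, and hence for the whole lemma, is complete.
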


For the next theorem we will need Quillen's localization theorem; we reproduce it here.
\begin{thm}[{\cite[Theorem 5]{Qui73}}]
\label{localization}
Let $\mathcal{A}$ be an abelian category with a set of isomorphism classes of objects.  Let $\mathcal{B} \subset \mathcal{A}$ be a Serre subcategory, $i:\mathcal{B} \rightarrow \mathcal{A}$ the inclusion functor, and $p:\mathcal{A} \rightarrow \mathcal{A}//\mathcal{B}$ the canonical functor.  Then
\[ \xymatrix{
K(\mathcal{B}) \ar[r]^{K(i)}	& K(\mathcal{A}) \ar[r]^-{K(p)}	& K(\mathcal{A}//\mathcal{B})
} \]
is a homotopy fiber sequence.
\end{thm}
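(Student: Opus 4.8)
The plan is to follow Quillen's original argument via the $Q$-construction and his Theorem B. Since $\mathcal{A}$, $\mathcal{B}$, and $\mathcal{A}//\mathcal{B}$ are abelian, their Waldhausen $K$-theory agrees with $Q$-construction $K$-theory (Gillet--Waldhausen), so I may work with the spaces $\Omega|Q\mathcal{A}|$, $\Omega|Q\mathcal{B}|$, $\Omega|Q(\mathcal{A}//\mathcal{B})|$ and realize $K(i)$, $K(p)$ by the induced functors $Qi$, $Qp$. Each $|Q(-)|$ is connected with $\pi_{n+1}\cong K_n$, so it suffices to show that
\[ |Q\mathcal{B}| \xrightarrow{|Qi|} |Q\mathcal{A}| \xrightarrow{|Qp|} |Q(\mathcal{A}//\mathcal{B})| \]
is a homotopy fiber sequence of spaces, and then pass to the associated spectra.

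I would apply Quillen's Theorem B to $F=Qp$: if for every arrow $X\to Y$ in the target the base-change functor $F/X\to F/Y$ between comma categories is a homotopy equivalence, then $F/X$ models the homotopy fiber of $|F|$ over $X$, giving a fiber sequence $|F/X|\to|Q\mathcal{A}|\to|Q(\mathcal{A}//\mathcal{B})|$. Taking $X=0$ the zero object, the fiber $F/0$ is easy to identify. Unwinding the $Q$-construction, a morphism $p(M)\to 0$ is represented by a layer $p(M)\twoheadleftarrow Z\rightarrowtail 0$, which forces $Z=0$ and hence $p(M)\cong 0$; as $\mathcal{B}$ is exactly the kernel of the exact quotient functor $p$, the objects of $F/0$ are precisely the $M\in\mathcal{B}$. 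Because a Serre subcategory is closed under admissible sub- and quotient objects, every layer realizing a morphism between such objects again lies in $\mathcal{B}$, so there is a canonical isomorphism $F/0\cong Q\mathcal{B}$ under which the fiber inclusion becomes $Qi$. This simultaneously pins down the fiber and the first map in the sequence.

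The remaining step, the base-change hypothesis of Theorem B, is the analytic heart of the theorem and is the main obstacle; it does not follow by formal manipulation alone. I would show that each transition functor $u_*:F/X\to F/Y$ is a homotopy equivalence by factoring $u$ into its two generating types---the morphism induced by an admissible monomorphism and the morphism induced by an admissible epimorphism---and treating each separately. The key leverage is that $p$ lifts data: every object of $\mathcal{A}//\mathcal{B}$, and every admissible mono or epi in it, lifts to $\mathcal{A}$, the ambiguity being controlled by $\mathcal{B}$. From a chosen lifting one builds a candidate inverse to $u_*$ and connects the two composites to the identity by natural transformations, which become homotopies after realization; where a single lifting does not suffice, one filters the comma categories by the length of the layers and argues by d\'evissage.

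Granting the base-change property, Theorem B delivers the homotopy fiber sequence of $|Q|$-spaces, and passing to the spectra built from the iterated $Q$-construction yields the homotopy fiber sequence $K(\mathcal{B})\to K(\mathcal{A})\to K(\mathcal{A}//\mathcal{B})$ asserted, together with its long exact sequence on homotopy groups. An alternative I would keep in reserve is to equip $\mathcal{A}$ with the weak equivalences consisting of maps whose kernel and cokernel lie in $\mathcal{B}$ and invoke Waldhausen's fibration theorem directly in the $S_\bullet$-framework used elsewhere in this paper; but that route consumes the same lifting input and offers no genuine shortcut, so I would present the $Q$-construction argument as the main line.
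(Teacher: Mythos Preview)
The paper does not supply a proof of this theorem at all: it is quoted verbatim from Quillen \cite[Theorem~5]{Qui73} as a black box, introduced by the sentence ``For the next theorem we will need Quillen's localization theorem; we reproduce it here.'' So there is no in-paper argument to compare your proposal against.

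That said, your sketch is a faithful outline of Quillen's original proof via the $Q$-construction and Theorem~B, and the identification of the fiber $F/0$ with $Q\mathcal{B}$ is correct. You are also honest about where the real work lies: the verification of the base-change hypothesis for $Qp$ is the substantive step, and your description of it (``filter the comma categories \ldots\ and argue by d\'evissage'') is more of a gesture toward Quillen's argument than an actual proof. In Quillen's paper this step occupies several pages and proceeds by showing that for any $X$ in $\mathcal{A}//\mathcal{B}$ the comma category $F/X$ is homotopy equivalent to $Q\mathcal{B}$ via an explicit auxiliary category, not quite by the filtration you suggest. If you were asked to supply a complete proof here, that paragraph would need to be replaced by the genuine argument; as a citation-level summary it is adequate, and in the context of this paper a citation is all that is expected.
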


Let $e:\widehat{\mathcal{C}} \rightarrow \mathsf{mod}(R)$ be the evaluation functor $F \mapsto F(R)$.  Let $\widehat{\mathcal{C}}_0=\mathrm{ker}(e) \subset \widehat{\mathcal{C}}$, i.e. $\widehat{\mathcal{C}}_0$ is the category of finitely presented functors $F$ satisfying $F(R) \cong 0$.  $\widehat{\mathcal{C}}_0$ is an abelian category (\cite[4.17]{Yos90}); kernels and cokernels in $\widehat{\mathcal{C}}_0$ are computed pointwise.
\begin{thm}[{\cite[I.1.5]{AusRei86}}]
Let $i:\widehat{\mathcal{C}}_0 \rightarrow \widehat{\mathcal{C}}$ be the inclusion functor.  Then
\[
\xymatrix{
K(\widehat{\mathcal{C}}_0) \ar[r]^{K(i)}	& K(\widehat{\mathcal{C}}) \ar[r]^{K(e)}	& K'(R)
}
\]
is a homotopy fiber sequence.
\label{thm2}
\end{thm}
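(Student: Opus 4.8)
The plan is to realize this sequence as an instance of Quillen's localization theorem (Theorem \ref{localization}) for the Serre subcategory $\widehat{\mathcal{C}}_0 = \mathrm{ker}(e)$ of $\widehat{\mathcal{C}}$, after identifying the Serre quotient $\widehat{\mathcal{C}}//\widehat{\mathcal{C}}_0$ with $\mathsf{mod}(R)$ by means of Lemma \ref{auslandercriterion}. To set this up, note first that $\widehat{\mathcal{C}}$ is an abelian category in which kernels and cokernels are computed pointwise, and that it has a set of isomorphism classes of objects — its objects are cokernels of maps between representable functors, and $\mathcal{C}$ is essentially small — hence is well-powered; the same is true of $\mathsf{mod}(R)$. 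Because kernels and cokernels in $\widehat{\mathcal{C}}$ are pointwise, the evaluation functor $e\colon \widehat{\mathcal{C}} \to \mathsf{mod}(R)$ is exact, and by definition $\widehat{\mathcal{C}}_0 = \mathrm{ker}(e)$ is then a Serre subcategory. Theorem \ref{localization} therefore produces a homotopy fiber sequence
\[
K(\widehat{\mathcal{C}}_0) \xrightarrow{K(i)} K(\widehat{\mathcal{C}}) \xrightarrow{K(p)} K(\widehat{\mathcal{C}}//\widehat{\mathcal{C}}_0),
\]
with $p$ the canonical projection, and it remains to identify the target with $K'(R) = K(\mathsf{mod}(R))$ in such a way that $K(p)$ becomes $K(e)$.

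For that identification I would apply Lemma \ref{auslandercriterion}, which requires an additive functor $s\colon \mathsf{mod}(R) \to \widehat{\mathcal{C}}$ with $es = \mathrm{id}_{\mathsf{mod}(R)}$. The natural choice is $s(X) = \mathrm{Hom}_R(-,X)\big|_{\mathcal{C}}$, with $s$ acting on morphisms by post-composition; this is clearly additive, and since $R \in \mathcal{C}$ one has $e(s(X)) = \mathrm{Hom}_R(R,X) = X$ naturally, so $es = \mathrm{id}$. The one genuine verification — and the step I expect to be the main obstacle — is that $s(X)$ actually lies in $\widehat{\mathcal{C}}$, i.e. that the restricted functor $\mathrm{Hom}_R(-,X)|_{\mathcal{C}}$ is finitely presented on $\mathcal{C}$.

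To prove this I would use Auslander--Buchweitz approximation \cite{AusBuc89} twice, exactly in the manner of the proof of Proposition \ref{prop1}. Pick a maximal Cohen--Macaulay approximation $0 \to Y \to M' \to X \to 0$ with $M' \in \mathcal{C}$; then $h(M') \to s(X)$ is an epimorphism of functors, and left exactness of $\mathrm{Hom}$ identifies its kernel with the subfunctor $s(Y) = \mathrm{Hom}_R(-,Y)|_{\mathcal{C}} \subseteq h(M')$. Approximating $Y$ in the same way produces $M \in \mathcal{C}$ together with an epimorphism $h(M) \to s(Y)$, and the composite $h(M) \to s(Y) \hookrightarrow h(M')$ is, by the Yoneda lemma, of the form $h(g)$ for a morphism $g\colon M \to M'$ in $\mathcal{C}$. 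Since the image of $h(g)$ is $s(Y)$ and $h(M') \to s(X)$ is an epimorphism with kernel $s(Y)$, we get $\mathrm{coker}(h(g)) \cong s(X)$, so $s(X)$ has a two-term presentation by representable functors and hence belongs to $\widehat{\mathcal{C}}$.

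With $s$ in hand, Lemma \ref{auslandercriterion} tells us that $ps\colon \mathsf{mod}(R) \to \widehat{\mathcal{C}}//\widehat{\mathcal{C}}_0$ is an inverse equivalence to the functor $\overline{e}\colon \widehat{\mathcal{C}}//\widehat{\mathcal{C}}_0 \to \mathsf{mod}(R)$ induced by $e$; in particular $\overline{e}\circ p = e$. Applying $K$-theory, $K(\overline{e})$ is a homotopy equivalence satisfying $K(\overline{e})\circ K(p) = K(e)$, so transporting the localization fiber sequence above across the equivalence $K(\overline{e})$ shows that
\[
K(\widehat{\mathcal{C}}_0) \xrightarrow{K(i)} K(\widehat{\mathcal{C}}) \xrightarrow{K(e)} K'(R)
\]
is a homotopy fiber sequence, as asserted. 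Apart from the finite-presentation claim for $s(X)$, every step is formal once one has Lemma \ref{auslandercriterion}, Theorem \ref{localization}, and the maximal Cohen--Macaulay approximations already invoked for Proposition \ref{prop1}.
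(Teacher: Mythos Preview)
Your proof is correct and follows the same overall strategy as the paper: invoke Quillen's localization theorem for the Serre subcategory $\widehat{\mathcal{C}}_0 = \ker e$, and identify the quotient with $\mathsf{mod}(R)$ via Lemma~\ref{auslandercriterion} by exhibiting an additive section $s$ of $e$. The only difference is the choice of section: the paper sets $s(M) = \mathrm{coker}\bigl(h(P_1) \to h(P_0)\bigr)$ for a fixed projective presentation $P_1 \to P_0 \to M \to 0$, which lands in $\widehat{\mathcal{C}}$ for free since projective $R$-modules are maximal Cohen--Macaulay, whereas your choice $s(X) = \mathrm{Hom}_R(-,X)\big|_{\mathcal{C}}$ is more canonical (no choices, functoriality is automatic) but costs two Auslander--Buchweitz approximations to verify finite presentation.
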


\begin{proof}
Since $\widehat{\mathcal{C}}_0 = \mathrm{ker}\,e$, it suffices by Lemma \ref{auslandercriterion} and the localization theorem \ref{localization} to find an additive functor $s$ which is right inverse to $e$.  For each $M$ in $\mathsf{mod}(R)$, choose a projective presentation
$P_1 \rightarrow P_0 \rightarrow M \rightarrow 0$
of $M$.  Set $s(M) = \mathrm{coker}(\mathrm{Hom}_{\mathcal{C}}(-,P_1) \rightarrow \mathrm{Hom}_{\mathcal{C}}(-,P_0))$.  This assignment extends to a right exact functor $s:\mathsf{mod}(R) \rightarrow \widehat{\mathcal{C}}$ such that $es = \mathrm{id}_{\mathsf{mod}(R)}$, as desired.
\end{proof}

\begin{rem}
One can alternatively deduce Theorem \ref{thm2} from a theorem of Schlichting.  According to \cite[Proposition 2]{Sch06}, $K(\widehat{\mathcal{C}}_0)$ is the homotopy fiber of $K(\mathrm{id}):K(\mathcal{C}^{\oplus}) \rightarrow K(\mathcal{C})$.  Now Theorem \ref{thm2} follows from the fact that $h:\mathcal{C}^{\oplus} \rightarrow \widehat{\mathcal{C}}$ and the inclusion $\mathcal{C} \rightarrow \mathsf{mod}(R)$ are $K$-theory equivalences (the former by Proposition \ref{prop1}, the latter by the resolution theorem).
\end{rem}

Next we classify the simple objects of $\widetilde{\mathcal{C}}$.  For each indecomposable maximal Cohen-Macaulay module $M$, let $R_M = \mbox{End}_R(M)^\mathrm{op}$.  Let $\kappa _M$ be the quotient of $R_M$ by its Jacobson radical; $\kappa _M$ is a division ring.

Since $\mathrm{End}_RM$ is a local ring, the functor $\mathcal{C}(-,M) \in \widetilde{\mathcal{C}}$ has a unique maximal subfunctor $F_M$, which coincides with $\mathrm{rad}(\mathcal{C})(-,M)$.  For any indecomposable $N \not \cong M$, $F_M(N) = \mathcal{C}(N,M)$, and $F_M(M) = \mathrm{rad}(\mathrm{End}_RM)$.

Let $S_M = \mathcal{C}(-,M)/F_M$.  As an additive functor, $S_M$ is determined uniquely up to isomorphism by the following properties.
\begin{itemize}
\item[1.]
$S_M (N) = 0$ if $N$ is indecomposable and not isomorphic to $M$.
\item[2.]
$S_M (M) = \kappa _M^{\mathrm{op}}$.
\item[3.]
For $f:M \longrightarrow M$, $S_M(f)(\alpha)=\alpha \cdot \overline{f}$, where $\overline{f}$ is the image of $f$ in $\kappa _M^{\mathrm{op}}$.
\end{itemize}

\begin{prop}
\label{sec:main-result}
Let $S$ be a functor in $\widetilde{\mathcal{C}}$.  Then the following are equivalent:
\begin{itemize}
\item[1.]
$S$ is simple in $\widetilde{\mathcal{C}}$, and $S(R) \cong 0$.
\item[2.]
$S \cong S_M$ for some indecomposable $R$-module $M$ which is not isomorphic to $R$.
\end{itemize}
\end{prop}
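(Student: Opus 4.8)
The plan is to prove the two implications $(2)\Rightarrow(1)$ and $(1)\Rightarrow(2)$ separately. The first is a direct verification from the three defining properties of $S_M$, while the second is the substantive direction and will proceed by exhibiting an arbitrary simple functor in $\widetilde{\mathcal{C}}$ as a quotient of a representable functor and then invoking the uniqueness of the maximal subfunctor of such a representable.

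For $(2)\Rightarrow(1)$ I would argue as follows. Suppose $S\cong S_M$ with $M$ indecomposable and $M\not\cong R$. Since $R$ is an indecomposable object of $\mathcal{C}$ (its endomorphism ring is the local ring $R$) not isomorphic to $M$, property~1 of $S_M$ gives $S_M(R)=0$. For simplicity, let $0\neq G\subseteq S_M$ be a subfunctor. As kernels in $\widetilde{\mathcal{C}}$ are computed pointwise and $S_M$ vanishes on every indecomposable other than $M$, so does $G$; hence $G(M)\neq 0$. By property~3, $G(M)$ is closed under right multiplication by every $\overline f\in\kappa_M^{\mathrm{op}}$, so it is a nonzero right submodule of the division ring $\kappa_M^{\mathrm{op}}=S_M(M)$, hence equals $S_M(M)$. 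In particular $\overline{\mathrm{id}_M}\in G(M)$; since $\mathcal{C}(-,M)$, and therefore its quotient $S_M$, is generated by the image of $\mathrm{id}_M$, this forces $G=S_M$. Thus $S_M$ is simple.

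For $(1)\Rightarrow(2)$ I would start from a simple $S$ with $S(R)\cong 0$. Since $S\neq 0$, pick $X\in\mathcal{C}$ and $0\neq x\in S(X)$; the Yoneda lemma turns $x$ into a nonzero natural transformation $\eta\colon\mathcal{C}(-,X)\to S$, and simplicity of $S$ forces $\eta$ to be an epimorphism. Decomposing $X\cong M_1\oplus\cdots\oplus M_n$ into indecomposables (possible since $\mathcal{C}$ is Krull-Schmidt) realizes $\mathcal{C}(-,X)$ as $\bigoplus_j\mathcal{C}(-,M_j)$ and $\eta$ as determined by its restrictions $\eta_j\colon\mathcal{C}(-,M_j)\to S$, at least one of which must be nonzero, hence again surjective; write $M$ for the corresponding indecomposable summand. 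Then $\ker\eta_j$ is a maximal proper subfunctor of $\mathcal{C}(-,M)$. But $\mathrm{End}_R(M)$ is local, so by the discussion preceding the proposition $\mathcal{C}(-,M)$ has a \emph{unique} maximal subfunctor, namely $F_M$; hence $\ker\eta_j=F_M$ and $S\cong\mathcal{C}(-,M)/F_M=S_M$. Finally $M\not\cong R$, since otherwise $S(R)\cong S_R(R)=\kappa_R^{\mathrm{op}}\neq 0$, contrary to hypothesis.

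The main obstacle is really the bookkeeping in $(1)\Rightarrow(2)$: one must check carefully that surjectivity is inherited when passing from $\mathcal{C}(-,X)$ to an indecomposable summand $\mathcal{C}(-,M)$, and that the earlier identification of the unique maximal subfunctor of a representable functor, which genuinely uses that $\mathrm{End}_R(M)$ is local, does pin down $\ker\eta_j$ rather than merely bound it. Everything else is formal manipulation with the Yoneda lemma and the pointwise computation of (co)kernels in $\widetilde{\mathcal{C}}$.
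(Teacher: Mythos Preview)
Your proof is correct and follows essentially the same route as the paper: Yoneda produces an epimorphism from a representable onto $S$, and the uniqueness of the maximal subfunctor $F_M\subset\mathcal{C}(-,M)$ for $M$ indecomposable pins down the kernel. The only difference is that the paper skips your decomposition step by directly choosing an \emph{indecomposable} $M$ with $S(M)\neq 0$ (an additive functor vanishing on all indecomposables in a Krull-Schmidt category is zero), and for $(2)\Rightarrow(1)$ the paper simply cites that $F_M$ is maximal rather than reverifying simplicity from the three properties of $S_M$.
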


\begin{proof}
$2 \Rightarrow 1$ is clear, since $F_M \subset \mathcal{C}(-,M)$ is a maximal subfunctor and $F_M(R)=\mathcal{C}(R,M)$.  We prove $1 \Rightarrow 2$.  Suppose $S$ is a simple object in $\widetilde{\mathcal{C}}$.  $S$ cannot vanish on all indecomposables, so let $M$ be an indecomposable in $\mathcal{C}$ such that $S(M) \neq 0$.  Choose nonzero $x \in S(M)$.  $x$ defines a nonzero morphism $x_*:\mathcal{C}(-,M) \longrightarrow S$ by $(x_*)_N(f) = S(f)(x)$ for $f:N \longrightarrow M$ in $\mathcal{C}$.  As $S$ is simple, $x_*$ must be an epimorphism, and its kernel is a maximal subfunctor of $\mathcal{C}(-,M)$.  Therefore $S \cong \mathcal{C}(-,M)/F_M = S_M$.
\end{proof}

Recall that a map $f:E \rightarrow M$ in $\mathcal{C}$ is called \emph{right almost split} if $\mathrm{im}(\mathcal{C}(-,f))=\mathrm{rad}(\mathcal{C})(-,M)$.  If $M$ is indecomposable and not isomorphic to $R$, this is equivalent to saying that
\[ \xymatrix{
\mathcal{C}(-,E) \ar[r]^{\mathcal{C}(-,f)}	& \mathcal{C}(-,M) \ar[r]	& S_M \ar[r]	& 0
} \]
is exact.  $f$ is called \emph{minimal right almost split} if $f$ is right almost split and, for any $g \in \mathrm{End}_{\mathcal{C}}E$ such that $fg=f$, $g$ is an automorphism.

The following proposition was originally proved by Auslander (see, e.g., \cite{Aus86}) under the assumption that $R$ is complete; as observed in \cite[3.2]{Yos90}, one needs only assume $R$ is Henselian.  We omit the proof.
\begin{prop}
\label{existence}
For any indecomposable $M$ in $\mathcal{C}$ which is not isomorphic to $R$, there is a minimal right almost split morphism $E \longrightarrow M$.
\end{prop}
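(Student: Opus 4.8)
The plan is to identify a minimal right almost split morphism $E\to M$ with the Auslander--Reiten sequence ending in $M$, and to construct that sequence by Auslander's transpose-and-dualize recipe. By the discussion preceding the proposition, for indecomposable $M\not\cong R$ a morphism $f\colon E\to M$ is right almost split exactly when $\mathcal{C}(-,E)\xrightarrow{\mathcal{C}(-,f)}\mathcal{C}(-,M)\to S_M\to 0$ is exact, and it is moreover minimal right almost split exactly when this is a \emph{minimal} projective presentation of $S_M$ in $\widehat{\mathcal{C}}$. Since $\mathcal{C}$ is Krull--Schmidt, $\widehat{\mathcal{C}}$ has projective covers, so every finitely presented functor admits a minimal projective presentation; thus it suffices to show $S_M\in\widehat{\mathcal{C}}$, i.e.\ that the maximal subfunctor $F_M=\mathrm{rad}(\mathcal{C})(-,M)\subset\mathcal{C}(-,M)$ is finitely generated. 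This finiteness is exactly what the isolated-singularity hypothesis buys.

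I would produce the generators of $F_M$ from an honest short exact sequence of modules. Write $\omega$ for the canonical module, $d=\dim R$, and let $\mathrm{Tr}\,M$ be the Auslander transpose of $M$: the cokernel of $\mathrm{Hom}_R(\phi,R)$ for a minimal presentation $P_1\xrightarrow{\phi}P_0\to M\to 0$. Set $\tau M$ to be the maximal Cohen--Macaulay module obtained from $\mathrm{Tr}\,M$ by taking a high syzygy and dualizing against $\omega$, explicitly $\tau M:=\mathrm{Hom}_R(\Omega^{d}\mathrm{Tr}\,M,\omega)$; then $\tau M\in\mathcal{C}$, and it is indecomposable and non-free because $M$ is. The key input is Auslander--Reiten duality: there is an isomorphism, natural in the $\mathcal{C}$-variable $N$,
\[
\mathrm{Ext}^1_R(N,\tau M)\ \cong\ D\,\underline{\mathrm{Hom}}_R(M,N),
\]
where $\underline{\mathrm{Hom}}$ denotes maps modulo those factoring through a free module and $D=\mathrm{Hom}_R(-,E_R(R/\mathfrak{m}))$ is Matlis duality. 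One proves this by composing the standard identity $\underline{\mathrm{Hom}}_R(M,N)\cong\mathrm{Tor}^R_1(\mathrm{Tr}\,M,N)$ with the adjunction isomorphism $D\,\mathrm{Tor}^R_1(\mathrm{Tr}\,M,N)\cong\mathrm{Ext}^1_R(N,D\,\mathrm{Tr}\,M)$ and then replacing $D\,\mathrm{Tr}\,M$ by $\tau M$ via local duality, using that $N$ is maximal Cohen--Macaulay. The isolated-singularity hypothesis enters precisely here: on the punctured spectrum $M$ is free, so $\underline{\mathrm{Hom}}_R(M,N)$ is supported only at $\mathfrak{m}$ and hence has finite length, which is what makes Matlis duality behave well. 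Since $R$ is only assumed Henselian, one either reruns the argument with local cohomology in place of $E_R(R/\mathfrak{m})$, or base changes to $\widehat{R}$, noting that all modules in sight are defined over $R$ and that finite-length modules and their Matlis duals are unaffected by completion.

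With AR duality in hand, take $N=M$. Since $\mathrm{Ext}^1_R(P,\tau M)=0$ for projective $P$, the $\mathrm{End}_R(M)$-action on $\mathrm{Ext}^1_R(M,\tau M)\cong D\,\underline{\mathrm{End}}_R(M)$ factors through the local ring $\underline{\mathrm{End}}_R(M)$, which is nonzero (as $M$ is not free) and of finite length; hence this module has nonzero socle, simple because it is the Matlis dual of the simple top of $\underline{\mathrm{End}}_R(M)$. Let $\xi$ be a nonzero socle element and $0\to\tau M\to E\xrightarrow{\pi}M\to 0$ the corresponding extension; $E\in\mathcal{C}$ by Lemma \ref{sec:main-result-2}. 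Then $\pi$ is right almost split: it is not a split epimorphism (else $\xi=0$), so $\pi g$ is radical for every $g$; conversely, if $h\colon Z\to M$ is radical then $h\varphi$ is a non-unit of $\mathrm{End}_R(M)$ for every $\varphi\colon M\to Z$, hence lies in $\mathrm{rad}\,\mathrm{End}_R(M)$, hence its image in $\underline{\mathrm{End}}_R(M)$ lies in $\mathrm{rad}\,\underline{\mathrm{End}}_R(M)$, on which the socle functional $\xi$ vanishes; by naturality of the duality this says $h^{*}\xi=0$, i.e.\ $h$ factors through $\pi$. And $\pi$ is minimal: if $\theta\in\mathrm{End}_R(E)$ satisfies $\pi\theta=\pi$, then $\theta$ restricts to $\bar\theta\in\mathrm{End}_R(\tau M)$ with $\bar\theta_{*}\xi=\xi$, so $(1-\bar\theta)_{*}\xi=0$; since $\xi\neq0$ and $\mathrm{End}_R(\tau M)$ is local, $1-\bar\theta$ cannot be a unit, so $\bar\theta$ is, and then $\theta$ is an automorphism by the five lemma. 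Thus $\pi$ is minimal right almost split.

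The hard part will be establishing the Auslander--Reiten duality isomorphism --- its naturality and, above all, the passage from the transpose to the maximal Cohen--Macaulay module $\tau M$ via local duality --- and checking that it survives the Henselian (rather than complete) hypothesis; everything after that is formal. Equivalently, phrased inside the functor category: AR duality is exactly what forces $F_M$ to be finitely generated, hence $S_M\in\widehat{\mathcal{C}}$, after which a minimal projective presentation of $S_M$ is the sought-after morphism.
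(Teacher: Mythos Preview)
The paper does not prove this proposition at all: it simply cites Auslander \cite{Aus86} for the complete case and Yoshino \cite[3.2]{Yos90} for the observation that Henselian suffices, and writes ``We omit the proof.'' Your sketch is precisely the argument one finds in those references: construct $\tau M$ via transpose, syzygy, and $\omega$-dual; establish the Auslander--Reiten duality $\mathrm{Ext}^1_R(N,\tau M)\cong D\,\underline{\mathrm{Hom}}_R(M,N)$ using the isolated-singularity hypothesis to ensure finite length; and then take a nonzero socle element of $\mathrm{Ext}^1_R(M,\tau M)$ as the almost split extension. So there is nothing to compare---you have supplied what the paper outsources.

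Two small points worth tightening. First, you assert that $\tau M$ is indecomposable; this is needed for the minimality argument (where you use that $\mathrm{End}_R(\tau M)$ is local), and it requires a word: it follows because transpose is a duality on the stable category and $\Omega^d$ and $\mathrm{Hom}_R(-,\omega)$ are equivalences on the relevant subcategories of MCM modules, hence all preserve indecomposability. Second, in the right-almost-split verification you pass from $h\varphi\in\mathrm{rad}\,\mathrm{End}_R(M)$ to its image lying in $\mathrm{rad}\,\underline{\mathrm{End}}_R(M)$; this is correct, but only because the kernel of $\mathrm{End}_R(M)\to\underline{\mathrm{End}}_R(M)$ (maps factoring through a free module) is contained in the radical, which in turn uses that $M$ is indecomposable and non-free. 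Both are routine, but since you flagged the AR-duality step as the hard part, it is worth noting that these small steps also depend on the standing hypotheses.
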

It follows that each functor $S_M$ is finitely presented.

The next theorem follows easily from results of Auslander; our proof is taken from \cite{Yos90}.

\begin{thm}
\label{thm3}
Suppose $R$ is of finite Cohen-Macaulay type.
Then every functor $F$ in $\widehat{\mathcal{C}}_0$ admits a filtration
\[
0 = F_0 \subset F_1 \subset \cdots \subset F_{n-1} \subset F_n = F
\]
with $F_i$ in $\widehat{\mathcal{C}}_0$ and $F_i / F_{i-1}$ simple in $\widetilde{\mathcal{C}}$, for all $i$.
\end{thm}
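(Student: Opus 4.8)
The plan is to prove the sharper statement that every $F$ in $\widehat{\mathcal{C}}_0$ has finite length as an object of $\widetilde{\mathcal{C}}$; a composition series of $F$ is then the desired filtration, provided we check that its terms lie in $\widehat{\mathcal{C}}_0$. Finiteness of length will be reduced to a finite-length statement about an $R$-module, and finite Cohen--Macaulay type enters only in this last reduction.

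First I would bound a single $F$. Fix a finite presentation $\mathrm{Hom}_{\mathcal{C}}(-,N_1) \to \mathrm{Hom}_{\mathcal{C}}(-,N_0) \to F \to 0$ given by a map $g:N_1\to N_0$ in $\mathcal{C}$. Evaluating at $R$ yields $N_1 \to N_0 \to F(R)\to 0$, so $F(R)=0$ forces $g$ to be surjective, and by Lemma \ref{sec:main-result-2}(2) its kernel $K$ is maximal Cohen--Macaulay. For any $C$ in $\mathcal{C}$, evaluating the presentation at $C$ identifies $F(C)$ with $\mathrm{coker}\bigl(\mathrm{Hom}_R(C,N_1)\to\mathrm{Hom}_R(C,N_0)\bigr)$, which the long exact sequence of $\mathrm{Hom}_R(C,-)$ applied to $0\to K\to N_1\to N_0\to 0$ embeds into $\mathrm{Ext}^1_R(C,K)$. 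Since $R$ is an isolated singularity, $R_{\mathfrak{p}}$ is regular for every nonmaximal $\mathfrak{p}$, so the maximal Cohen--Macaulay module $C$ is free on the punctured spectrum; hence $\mathrm{Ext}^1_R(C,K)_{\mathfrak{p}}\cong\mathrm{Ext}^1_{R_{\mathfrak{p}}}(C_{\mathfrak{p}},K_{\mathfrak{p}})=0$ for $\mathfrak{p}\neq\mathfrak{m}$, and as $\mathrm{Ext}^1_R(C,K)$ is finitely generated it has finite length over $R$. Therefore so does $F(C)$, for every $C$ in $\mathcal{C}$.

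Now I would use finite Cohen--Macaulay type. Let $M$ be the direct sum of representatives of the finitely many isomorphism classes of indecomposable maximal Cohen--Macaulay modules, so that $\mathcal{C}=\mathsf{add}(M)$. For a subfunctor $G\subseteq F$ and any $C$ in $\mathcal{C}$, writing $C$ as the image of an idempotent $e\in\mathrm{End}_{\mathcal{C}}(M^k)$ gives $G(C)=F(e)\bigl(G(M)^k\bigr)=G(M)^k\cap F(C)$ inside $F(M)^k$, because $G$ is stable under $F(e)$; hence $G$ is recovered from the submodule $G(M)\subseteq F(M)$. Thus $G\mapsto G(M)$ is an injective, order-preserving map from the subfunctors of $F$ to the submodules of the finite-length module $F(M)$, so the subfunctors of $F$ satisfy the ascending and descending chain conditions and $F$ has finite length in $\widetilde{\mathcal{C}}$. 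Pick a composition series $0=F_0\subset\cdots\subset F_n=F$; each quotient $F_i/F_{i-1}$ is simple in $\widetilde{\mathcal{C}}$, and being a subquotient of $F$ it vanishes at $R$, so by Proposition \ref{sec:main-result} it is $S_{M'}$ for some indecomposable $M'\not\cong R$.

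It remains to see $F_i\in\widehat{\mathcal{C}}_0$. As $F_i\subseteq F$ and kernels in $\widetilde{\mathcal{C}}$ are computed pointwise, $F_i(R)\subseteq F(R)=0$; so I only need $F_i$ finitely presented. The functors $S_{M'}$ are finitely presented (noted after Proposition \ref{existence}), and finitely presented functors are closed under extension in $\widetilde{\mathcal{C}}$ by the horseshoe construction of a projective presentation of an extension from presentations of its ends --- using that representable functors are projective and that a finite direct sum of representables is representable. Inducting from $F_0=0$, every $F_i$ is finitely presented, hence lies in $\widehat{\mathcal{C}}_0$. I expect the one real obstacle to be the second paragraph: producing the uniform finite-length bound on the groups $F(C)$ via the comparison with $\mathrm{Ext}^1_R(C,K)$ and the isolated-singularity hypothesis; everything after that is formal.
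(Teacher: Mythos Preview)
Your proof is correct and shares its core with the paper's: both embed $F$ into $\mathrm{Ext}^1_R(-,K)$ via a presentation coming from a surjection in $\mathcal{C}$, and both use the isolated-singularity hypothesis to see that $F$ takes finite-length values on maximal Cohen--Macaulay modules. The difference is only in how the filtration is extracted from this bound. You argue that the assignment $G\mapsto G(M)$ embeds the subobject lattice of $F$ in $\widetilde{\mathcal{C}}$ into the $R$-submodules of the finite-length module $F(M)$, conclude that $F$ has finite length in $\widetilde{\mathcal{C}}$, take a composition series, and then verify $F_i\in\widehat{\mathcal{C}}_0$ a posteriori via a horseshoe/extension-closure argument for finitely presented functors. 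The paper instead builds the filtration from the top down: it inducts on $\mathrm{length}_R F(L)$ (with $L$ the sum of the non-free indecomposables), at each step produces a surjection $F\twoheadrightarrow S_M$, and takes the kernel, which lies in $\widehat{\mathcal{C}}_0$ automatically because that category is abelian. The paper's route is marginally slicker since it sidesteps your separate check that the $F_i$ are finitely presented; your route, on the other hand, yields the slightly stronger conclusion that $F$ already has finite length in the big category $\widetilde{\mathcal{C}}$.
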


\begin{proof}
$F$ has a presentation of the form
\[ \xymatrix{
\mathcal{C}(-,N) \ar[r]^{\mathcal{C}(-,f)}	& \mathcal{C}(-,M) \ar[r]	& F \ar[r]	& 0
} \]
for some epimorphism $\xymatrix{f:N \ar@{>>}[r] & M}$ in $\mathcal{C}$.  Set $K = \mathrm{ker}(f)$, so $F$ is a subfunctor of $\mathrm{Ext}^1_R(-,K)$.  For any maximal Cohen-Macaulay module $L$ and prime $\mathfrak{p} \neq \mathfrak{m}$, $L_{\mathfrak{p}}$ is a maximal Cohen-Macaulay $R_{\mathfrak{p}}$-module; since $R_{\mathfrak{p}}$ is regular local, $L_{\mathfrak{p}}$ is in fact a free $R_{\mathfrak{p}}$-module.  Therefore $(\mathrm{Ext}^1_R (L,K))_{\mathfrak{p}} = \mathrm{Ext}^1_{R_\mathfrak{p}} (L_{\mathfrak{p}},K_{\mathfrak{p}}) = 0$.  Since $\mathrm{Ext}^1_R(L,K)$ is supported only at $\mathfrak{m}$, it must be a finite length $R$-module.  Therefore the submodule $F(L)$ is finite length as well.

Now let $L$ be the direct sum of all non-free indecomposables in $\mathcal{C}$.   The proof proceeds by induction on the length of $F(L)$.  If $\mathrm{length} (F(L)) = 0$, $F$ vanishes on all indecomposable maximal Cohen-Macaulay modules, so $F \cong 0$ and $F$ trivially admits the desired filtration.  Suppose, then, that $\mathrm{length} (F(L)) > 0$, and assume that any functor $G$ in $\widehat{\mathcal{C}}_0$ with $\mathrm{length}(G(L)) < \mathrm{length}(F(L))$ admits a filtration as above.  Choose an indecomposable $M$ with $F(M) \neq 0$; then choose an epimorphism of $R_M$-modules $p:F(M) \longrightarrow \kappa _M^{\mathrm{op}}$.  $p$ extends to a natural transformation $\pi :F \longrightarrow S_M$.  Let $G = \mathrm{ker}(\pi)$, so there is an exact sequence
\[
\xymatrix{
0 \ar[r]	& G \ar[r]	& F \ar[r]	& S_M \ar[r]	& 0
}.
\]
Therefore $\mathrm{length}(G(L)) < \mathrm{length}(F(L))$.  Since $\widehat{\mathcal{C}}_0$ is abelian, $G$ is in $\widehat{\mathcal{C}}_0$, so $G$ admits the desired filtration.  From the exact sequence above, it follows that $F$ admits such a filtration as well.
\end{proof}

\begin{defn}
Let $\widehat{\mathcal{C}}_0 ^s$ be the full subcategory of $\widehat{\mathcal{C}}_0$ consisting of objects which are semisimple in $\widetilde{\mathcal{C}}$.
\end{defn}

In other words, $\widehat{\mathcal{C}}_0^s$ consists of those functors which are isomorphic to finite direct sums of the functors $S_M$, with $M \not\cong R$.

\begin{defn}
We say $R$ is \emph{of finite Cohen-Macaulay type} if there are only finitely many isomorphism classes of indecomposable maximal Cohen-Macaulay $R$-modules.
\end{defn}

Theorem \ref{thm3} will imply the inclusion $\widehat{\mathcal{C}}_0^s \rightarrow \widehat{\mathcal{C}}_0$ is a $K$-theory equivalence; to prove this we first recall Quillen's D\'{e}vissage theorem.
\begin{thm}[{\cite[Thm. 4]{Qui73}}]
Let $\mathcal{A}$ be an abelian category and $\mathcal{B}$ a nonempty full subcategory closed under subobjects, quotients, and finite products in $\mathcal{A}$.  Suppose that every object $F$ in $\mathcal{A}$ admits a filtration
\[
0 = F_0 \subset F_1 \subset \cdots \subset F_{n-1} \subset F_n = F
\]
with $F_i/F_{i-1}$ in $\mathcal{B}$ for each $i$.

Then the inclusion functor induces a homotopy equivalence $K(\mathcal{B}) \simeq K(\mathcal{A})$.
\end{thm}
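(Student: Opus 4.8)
The plan is to prove this using Quillen's $Q$-construction. Since $\mathcal{B}$ is closed under subobjects, quotients, and finite products, the kernel and cokernel (computed in $\mathcal{A}$) of any map in $\mathcal{B}$ again lie in $\mathcal{B}$; hence $\mathcal{B}$ is an abelian category and the inclusion $j:\mathcal{B}\to\mathcal{A}$ is exact, so $K(\mathcal{B})$ makes sense and $Qj$ is defined. Modeling the $K$-theory of an abelian category by the $Q$-construction (which agrees with the $S_\cdot$-construction used above), we have $K(\mathcal{A})\simeq\Omega B(Q\mathcal{A})$, so it suffices to show that the induced map $B(Qj):B(Q\mathcal{B})\to B(Q\mathcal{A})$ is a homotopy equivalence.

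First I would invoke Quillen's Theorem A \cite{Qui73}: to prove $B(Qj)$ is a homotopy equivalence it is enough to show that for every object $M$ of $Q\mathcal{A}$ (that is, every $M\in\mathcal{A}$) the comma category $Qj/M$ has contractible nerve. Recall that a morphism $N\to M$ in $Q\mathcal{A}$ is an isomorphism class of diagrams $N\twoheadleftarrow U\rightarrowtail M$; unwinding the definition, an object of $Qj/M$ is then the same as an \emph{admissible layer of $M$ lying in $\mathcal{B}$}, i.e.\ a pair of subobjects $V\subseteq U\subseteq M$ with $U/V\in\mathcal{B}$ (here $N\cong U/V$), the morphisms being those induced from $Q\mathcal{B}$.

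The heart of the argument is the contractibility of $Qj/M$, which I would establish by induction on the length $\ell$ of a filtration of $M$ whose successive quotients lie in $\mathcal{B}$ (such a filtration exists by hypothesis). In the base case $M\in\mathcal{B}$: closure under subobjects forces every subobject of $M$ into $\mathcal{B}$, so every morphism $N\to M$ of $Q\mathcal{A}$ with $N\in\mathcal{B}$ is already a morphism of $Q\mathcal{B}$; consequently the layer $0\subseteq M\subseteq M$, i.e.\ the object $(M,\mathrm{id}_M)$, is terminal in $Qj/M$, and a category with a terminal object has contractible nerve. For the inductive step I would choose a nonzero $M'\subseteq M$ with $M'\in\mathcal{B}$ such that $M/M'$ admits a shorter filtration, and compare $Qj/M$ with $Qj/(M/M')$ by functors built from pushing layers $V\subseteq U\subseteq M$ along $M\twoheadrightarrow M/M'$ and pulling them back; the two composites should be connected to the identities by natural transformations of functors on the comma categories, which induce homotopies on nerves, so by the inductive hypothesis $Qj/M$ is contractible as well. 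Throughout, closure of $\mathcal{B}$ under subobjects and quotients is exactly what keeps the layers produced by these operations inside $\mathcal{B}$, hence inside the comma categories.

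I expect this inductive step to be the main obstacle. One cannot contract $Qj/M$ in a single move — in general it has neither an initial nor a terminal object — so the deformation must be assembled from several natural transformations, and the bookkeeping of how the subobjects $V\subseteq U\subseteq M$ interact with a fixed $M'$ (via the modular law in the lattice of subobjects of $M$) is delicate. Verifying that the comparison functors are well defined on morphisms of the $Q$-construction, and that the required natural transformations exist and compose correctly, is where the real work lies; everything else is a formal consequence of Theorem A and the $Q$-construction.
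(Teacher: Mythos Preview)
The paper does not prove this statement at all: it is Quillen's D\'evissage theorem, quoted verbatim with a citation to \cite[Thm.~4]{Qui73} and then applied as a black box to obtain Corollary~\ref{cor1}. There is therefore no proof in the paper to compare your proposal against.

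That said, your outline is precisely Quillen's own argument: reduce to Theorem~A, identify $Qj/M$ with the category of admissible layers $V\subseteq U\subseteq M$ having $U/V\in\mathcal{B}$, and prove contractibility by induction on the length of a $\mathcal{B}$-filtration of $M$, the base case being the terminal-object observation you give. One genuine pitfall to watch in your inductive step: $\mathcal{B}$ is \emph{not} assumed closed under extensions (e.g.\ semisimple objects in an abelian category satisfy the stated closure properties but not extension-closure), so an intermediate layer such as $(V,\,U+M')$ need not lie in the comma category, since $(U+M')/V$ is only an extension of objects of $\mathcal{B}$. Quillen's zig-zag of natural transformations is arranged so that every intermediate subquotient is either a subobject or a quotient of one already known to lie in $\mathcal{B}$; you should make sure your push--pull comparison with $Qj/(M/M')$ respects the same constraint. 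Apart from that, your diagnosis of where the work lies is accurate.
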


\begin{cor}
\label{cor1}
If $R$ is of finite Cohen-Macaulay type, the inclusion $\widehat{\mathcal{C}}_0 ^s \longrightarrow \widehat{\mathcal{C}}_0$ is a $K$-theory equivalence.
\end{cor}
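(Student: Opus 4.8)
The plan is to apply Quillen's Dévissage theorem with $\mathcal{A} = \widehat{\mathcal{C}}_0$ and $\mathcal{B} = \widehat{\mathcal{C}}_0^s$. Three hypotheses must be checked: that $\widehat{\mathcal{C}}_0^s$ is a nonempty full subcategory of $\widehat{\mathcal{C}}_0$; that it is closed under subobjects, quotients, and finite products in $\widehat{\mathcal{C}}_0$; and that every object of $\widehat{\mathcal{C}}_0$ admits a finite filtration whose successive quotients lie in $\widehat{\mathcal{C}}_0^s$. Granting these, Dévissage produces the homotopy equivalence $K(\widehat{\mathcal{C}}_0^s) \simeq K(\widehat{\mathcal{C}}_0)$ induced by the inclusion, which is the assertion.

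The filtration hypothesis is essentially Theorem \ref{thm3}. Given $F$ in $\widehat{\mathcal{C}}_0$, that theorem produces a finite filtration $0 = F_0 \subset \cdots \subset F_n = F$ with each $F_i$ in $\widehat{\mathcal{C}}_0$ and each $F_i/F_{i-1}$ simple in $\widetilde{\mathcal{C}}$. Since $\widehat{\mathcal{C}}_0$ is abelian with kernels and cokernels computed pointwise, each $F_i/F_{i-1}$ lies in $\widehat{\mathcal{C}}_0$; in particular $(F_i/F_{i-1})(R) = F_i(R)/F_{i-1}(R) = 0$. By Proposition \ref{sec:main-result} each $F_i/F_{i-1}$ is then isomorphic to some $S_M$ with $M \not\cong R$, hence lies in $\widehat{\mathcal{C}}_0^s$.

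For the closure properties, recall that $\widehat{\mathcal{C}}_0^s$ consists exactly of the functors isomorphic to finite direct sums of the simple functors $S_M$ with $M \not\cong R$. Closure under finite products (which coincide with finite direct sums) is then immediate. For subobjects and quotients, one uses that these are computed pointwise in $\widehat{\mathcal{C}}_0$, as in $\widetilde{\mathcal{C}}$: a subobject or quotient of a finite direct sum of simple objects in an abelian category is again a direct sum of copies of those same simple objects, hence a finite direct sum of the $S_M$. Since each $S_M$ is finitely presented (noted after Proposition \ref{existence}), such a direct sum lies in $\widehat{\mathcal{C}}$; and being a subobject or quotient of a functor vanishing at $R$, it vanishes at $R$, so it lies in $\widehat{\mathcal{C}}_0$, hence in $\widehat{\mathcal{C}}_0^s$. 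Nonemptiness is clear, since $0 \in \widehat{\mathcal{C}}_0^s$.

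The argument is routine, and no serious obstacle arises; the only point requiring a little care is ensuring that subobjects and quotients of objects of $\widehat{\mathcal{C}}_0^s$ remain \emph{finitely presented} — that is, stay inside $\widehat{\mathcal{C}}_0$ rather than merely inside the ambient functor category $\widetilde{\mathcal{C}}$ — which is precisely where the finite presentation of the $S_M$ is invoked.
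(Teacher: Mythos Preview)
Your proof is correct and follows essentially the same route as the paper: apply Quillen's D\'evissage with the filtration supplied by Theorem~\ref{thm3}, after checking that $\widehat{\mathcal{C}}_0^s$ is closed under subobjects, quotients, and finite products. One small simplification: the finite-presentation check you flag at the end is unnecessary, since D\'evissage only asks for closure under subobjects and quotients \emph{in} $\mathcal{A} = \widehat{\mathcal{C}}_0$, and such subobjects and quotients are by definition already objects of $\widehat{\mathcal{C}}_0$; the only thing to verify is that they remain semisimple in $\widetilde{\mathcal{C}}$, which is the general fact the paper cites directly.
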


\begin{proof}
Since subobjects, quotients, and products of semisimple objects in an abelian category are again semisimple, $\widehat{\mathcal{C}}_0^s$ is closed under taking subobjects, quotients, and products.  Theorem \ref{thm3} allows us to apply D\'{e}vissage to the subcategory $\widehat{\mathcal{C}}_0^s \subset \widehat{\mathcal{C}}_0$.  The conclusion follows.
\end{proof}

Assume $R$ is of finite Cohen-Macaulay type.  Let $\mathrm{ind}(\mathcal{C})$ denote the set of isomorphism classes of indecomposable objects in $\mathcal{C}$, and put $\mbox{ind}_0(\mathcal{C}) = \mbox{ind}(\mathcal{C}) \setminus \{[R]\}$.  $\widehat{\mathcal{C}}_0^s$ is semisimple, and its simple objects are the functors $S_M$ for $[M] \in \mbox{ind}_0(\mathcal{C})$.
Therefore the equivalences $\mathsf{proj}(\mathrm{End}(S_M)^{\mathrm{op}}) \simeq \mathsf{add}(S_M)$ induce an equivalence
\[
\bigoplus \limits_{[M] \in \mbox{ind}_0(\mathcal{C})} \mathsf{proj}(\mbox{End}(S_M)^{\mathrm{op}}) \xrightarrow{\sim} \widehat{\mathcal{C}}_0 ^s.
\]

Let $L=\bigoplus \limits_{[M] \in \mathrm{ind}(\mathcal{C})} M$ and put $\Lambda = (\mathrm{End}_RL)^{\mathrm{op}}$.  $\Lambda$ is sometimes called the \emph{Auslander algebra}.  Since $L$ is an additive generator for $\mathcal{C}^{\oplus}$, the horizontal functors in the diagram below are equivalences:
\[ \xymatrix @C=6pc {
\mathcal{C}^{\oplus} \ar[r]^{\mathrm{Hom}_{\mathcal{C}^{\oplus}}(L,-)}_{\simeq} \ar[d]^{h}	& \mathsf{proj}(\Lambda ) \ar@{^(->}[d] \\
\widehat{\mathcal{C}} \ar[r]^{F \mapsto F(L)}_{\simeq}				& \mathsf{mod}(\Lambda )
} \]

Combining everything, we end up with a diagram
\begin{equation}
\label{eq:5}
\xymatrix{
\bigoplus \limits_{\mbox{ind}_0(\mathcal{C})} \mathsf{proj}(\kappa _M) \ar[d]_{\simeq}	& 					& \mathsf{proj}(\Lambda ) \ar[dl]_{\simeq} \ar[d]^{{\simeq}_K}	& \\
\widehat{\mathcal{C}}_0^s \ar[d]_{\simeq_K}					& \mathcal{C}^\oplus \ar[d]_{\simeq _K}	& \mathsf{mod}(\Lambda) \ar[dl]_{\simeq}				& \\
\widehat{\mathcal{C}}_0 \ar[r]							& \widehat{\mathcal{C}} \ar[rr]		& 								& \mathsf{mod}(R) \\
}
\end{equation}
in which arrows labeled $\simeq_K$ induce equivalences in $K$-theory.  Together with the $K$-theory equivalences in this diagram, Theorem \ref{thm2} implies there are homotopy fiber sequences
\begin{equation} \label{eq:3} \xymatrix{
\bigvee \limits_{[M] \in \mathrm{ind}_0(\mathcal{C})} K(\kappa _M) \ar[r]	& K'(\Lambda ) \ar[r]	& K'(R)
} \end{equation}
\begin{equation} \label{eq:4} \xymatrix{
\bigvee \limits_{[M] \in \mathrm{ind}_0(\mathcal{C})} K(\kappa _M) \ar[r]^-{\alpha}	& K(\mathcal{C}^{\oplus}) \ar[r]^{\beta}	& K'(R)
% \cdots \ar[r]	& \bigoplus \limits_{M \not\cong R}^{\phantom{t}} K_i(\kappa _M) \ar[r]^{f_i}	& \bigoplus K_i(\Lambda) \ar[r]	& K_i'(R) \ar[r]	& \bigoplus \limits_{M \not\cong R}^{\phantom{t}} K_{i-1}(\kappa _M) \ar[r]	& \cdots \\
} \end{equation}
Taking homotopy groups in (\ref{eq:3}) yields the long exact sequence of Theorem \ref{sequencetheorem}.

\begin{rem}
\label{algclosed}
By Nakayama's Lemma, the image of $\mathfrak{m}$ in $R_M$ is contained in the maximal ideal of $R_M$.  So we may view $\kappa _M$ as a division algebra over $R/\mathfrak{m}$.  As $R_M$ is a finitely generated $R$-module, $\kappa _M$ is a finite dimensional vector space over $R/\mathfrak{m}$.  In particular, if $R/\mathfrak{m}$ is algebraically closed, $\kappa _M = R/\mathfrak{m}$.
\end{rem}

\begin{rem}
\label{mf}
Suppose $R$ is of the form $S/(w)$ for some regular local ring $S$ and $w \in S$.  Then we may apply the techniques above to obtain a decomposition of the $K$-theory of the category $\mathsf{MF}$ of matrix factorizations in $S$ with potential $w$.  An object of this category is a $\mathbb{Z}/2 \mathbb{Z}$-graded finitely generated free $S$-module $X$ with a degree-one endomorphism $d_X$ such that $d_X^2 = w \cdot \mathrm{id}$.   A morphism $f:X \rightarrow Y$ in $\mathsf{MF}$ is a degree zero map satisfying $d_Y f = f d_X$.

$\mathsf{MF}$ is a Frobenius category whose subcategory $\mathsf{prinj}(\mathsf{MF})$ of projective-injective objects consists of the contractible matrix factorizations -- that is, those objects $X$ for which there is a degree-one endomorphism $t:X \rightarrow X$ such that $td_X + d_X t = \mathrm{id}_X$.  Any Frobenius category $\mathcal{F}$ defines an exact category with weak equivalences $w\mathcal{F}$ consisting of those morphisms becoming invertible in the stable category $\mathcal{F}/\mathsf{prinj}(\mathcal{F})$.  We shall take the $K$-theory of $\mathsf{MF}$ relative to this subcategory $w\mathsf{MF}$ of weak equivalences.

The category $\mathsf{Ch}^b(\mathcal{E})$ of bounded chain complexes in an exact category $\mathcal{E}$ is a Frobenius category; its conflations are the sequences which are degree-wise split, and its projective-injective objects are the contractible complexes.  Denote by $\mathsf{perf}(R)$ the category of perfect complexes of $R$-modules, i.e. the exact subcategory of $\mathsf{Ch}^b\mathsf{mod}(R)$ consisting of complexes quasi-isomorphic to a complex of free $R$-modules.  There is a map of Frobenius pairs
\[
\xymatrix{
\Omega :(\mathsf{MF},\mathsf{prinj}(\mathsf{MF})) \ar[r]	& (\mathsf{Ch}^b\mathsf{mod}(R),\mathsf{perf}(R))
} \] taking a matrix factorization $\xymatrix{X^1 \ar@<+.5ex>[r]^{d^1} & \ar@<+.5ex>[l]^{d^0} X^0}$ to $\mathrm{coker}(d^1)$, considered as a complex concentrated in degree zero.  (Since $\mathrm{coker}(d^1)$ is annihilated by $w$, we may view it as an $R$-module.)  By ~\cite[Theorem 3.9]{Orl03}, $\Omega$ induces an equivalence on derived categories, so in particular it is an equivalence in $K$-theory (\cite[Proposition 3]{Sch06}).  Since the degree-zero inclusions $\mathsf{mod}(R) \rightarrow \mathsf{Ch}^b\mathsf{mod}(R)$ and  $\mathsf{proj}(R) \rightarrow \mathsf{Ch}^b\mathsf{proj}(R)$ are $K$-theory equivalences (\cite[1.11.7]{TT90}), and since the inclusion $\mathsf{Ch}^b\mathsf{proj}(R) \rightarrow \mathsf{perf}(R)$ is a derived equivalence, it follows that $K(\mathsf{MF})$ is equivalent to the homotopy cofiber of the map $K(R) \longrightarrow K'(R)$ induced by the inclusion $\mathsf{proj}(R) \rightarrow \mathsf{mod}(R)$.

Let $r:\mathsf{proj}(R) \rightarrow \mathcal{C}^{\oplus}$ be the inclusion, and set $X = \mathrm{cone}(K(r))$.  Consider the following exact triangles of spectra.
\begin{equation}
 \xymatrix @R=0.2pc @C=3pc {
  K(R) \ar[r]^{K(r)}			& K(\mathcal{C}^{\oplus}) \ar[r]^{\rho}	& X \ar[r]											& \sum K(R) \\
  K(\mathcal{C}^{\oplus}) \ar[r]^{\beta}	& K'(R) \ar[r]				& \sum \bigvee \limits_{\mathrm{ind}_0(\mathcal{C})}^{\phantom{t}} K(\kappa _M) \ar[r]^-{\sum \alpha}	& \sum K(\mathcal{C}^{\oplus}) \\
  K(R) \ar[r]^{\beta \circ K(r)}		& K'(R) \ar[r]				& K(\mathsf{MF}) \ar[r]										& \sum K(R) \\
 }
 \label{eq:2}
\end{equation}The middle sequence is the triangle from (\ref{eq:4}), rotated once.  Using the octahedral axiom to compare the cones of $\beta$, $K(r)$, and $\beta \circ K(r)$, we obtain an exact triangle
\begin{equation} \label{MFtriangle}
 \xymatrix{
  \bigvee \limits_{\mathrm{ind}_0(\mathcal{C})}^{\phantom{t}} K(\kappa _M) \ar[r]^-{\alpha '}	& X \ar[r]	& K(\mathsf{MF}) \ar[r]	& \sum \bigvee \limits_{\mathrm{ind}_0(\mathcal{C})}^{\phantom{t}} K(\kappa _M)
 }.
\end{equation}
We will study $\alpha '$ in the next section.
\end{rem}

\section{The Auslander-Reiten Matrix}
\label{ausl-reit-matr}
Assume now that $R$ is of finite CM type, that $k=R/\mathfrak{m}$ is algebraically closed, and that $R$ contains $k$.  We wish to understand the map
\begin{equation} \label{Kchain}\xymatrix{
\alpha:\bigvee \limits_{\mathrm{ind}_0(\mathcal{C})}K(k) \ar[r]	& K(\mathcal{C}^{\oplus})
} \end{equation}
which appears in (\ref{eq:4}).

Let $M$ be an indecomposable in $\mathcal{C}$ which is not isomorphic to $R$.  A short exact sequence
\begin{equation}
\label{arsequence}
\xymatrix{
0 \ar[r]	& N \ar[r]^g	& E \ar[r]^f	& M \ar[r]	& 0 \\
}
\end{equation}
in $\mathcal{C}$ is called the \emph{Auslander-Reiten sequence} ending in $M$ if $f$ is minimal right almost split.  This is the same as saying that
\[ \xymatrix{
0 \ar[r]	& \mathcal{C}(-,N) \ar[r]^{\mathcal{C}(-,g)}	& \mathcal{C}(-,E) \ar[r]^{\mathcal{C}(-,f)}	& \mathcal{C}(-,M)
} \]
is a minimal projective resolution in $\widehat{\mathcal{C}}$ of $S_M$.  The Auslander-Reiten sequence ending in $M$ is unique up to isomorphism.  By Proposition \ref{existence}, any non-projective indecomposable $M$ in $\mathcal{C}$ is part of an Auslander-Reiten sequence as in (\ref{arsequence}).

Let $M_0,\ldots ,M_n$ be the indecomposable maximal Cohen-Macaulay $R$-modules, with $M_0 = R$.  For $j>0$, set
\[ \xymatrix{0 \ar[r] & N_j \ar[r] & E_j \ar[r] & M_j \ar[r] & 0 }\]
to be the Auslander-Reiten sequence ending in $M_j$.  Given any $Q$ in $\mathcal{C}$, let $\#(j,Q)$ be the number of $M_j$-summands appearing in a decomposition of $Q$ into indecomposables.  Denote by $k_j$ the object of $\bigoplus \limits_{\mathrm{ind}_0(\mathcal{C})}\mathsf{mod}(k)$ which is $k$ in the $M_j$ coordinate and $0$ in the others.  Note that to define a $k$-linear functor out of $\bigoplus \limits_{\mathrm{ind}_0(\mathcal{C})}\mathsf{mod}(k)$, one needs only to specify the image of each object $k_j$.

Define $k$-linear functors
\begin{align*}
a_0,a_1,a_2:\bigoplus \limits_{\mathrm{ind}_0(\mathcal{C})} \mathsf{mod}(k) & \rightarrow	\mathcal{C}^{\oplus}
% a_0,a_1,a_2:\bigoplus \limits_{\mathrm{ind}_0(\mathcal{C})} \mathsf{mod}(k) & \rightarrow	\bigoplus \limits_{\mathrm{ind}(\mathcal{C})} \mathsf{mod}(k)
\end{align*}
by
\begin{align*}
a_0(k_j) &= M_j	& a_1(k_j)& = E_j				& a_2(k_j)&= N_j
\end{align*}
Set $a:\bigoplus \limits_{\mathrm{ind}_0(\mathcal{C})} \mathsf{mod}(k) \rightarrow \widehat{\mathcal{C}}$ to be the $k$-linear functor sending $k_j$ to $S_{M_j}$, and as before let $h:\mathcal{C}^{\oplus} \rightarrow \widehat{\mathcal{C}}$ be the Yoneda functor.  Tracing through the functors in (\ref{eq:5}), one sees that $K(a)=K(h) \circ \alpha$.

Since there is an exact sequence of functors
\[ \xymatrix{
0 \ar[r]	& h \circ a_2 \ar[r]	& h \circ a_1 \ar[r]	& h \circ a_0 \ar[r]	& a \ar[r]	& 0,
} \]
we have by the additivity theorem (\cite[$\S$3, Cor. 2]{Qui73})
\[K(ha_0)-K(ha_1)+K(ha_2) = K(a)\]
so that
\begin{equation}
\label{alpha}
\alpha =K(h)^{-1}K(a)=K(a_0)-K(a_1)+K(a_2).
\end{equation}
Let $m_l:\mathsf{mod}(k) \rightarrow \mathcal{C}^{\oplus}$ be the $k$-linear functor which sends $k$ to $M_l$.  Form an $(n+1) \times n$ integer matrix $T$ whose $lj$-entry is $\#(l,M_j)-\#(l,E_j)+\#(l,N_j)$.  ($T$ has a 0th row but no 0th column.)  Applying the additivity theorem to $a_0$, $a_1$, and $a_2$, we conclude from (\ref{alpha}) that the $j$th component of $\alpha$ is
\begin{equation} \label{alpha2}(\alpha )_j = \sum \limits_l T_{lj}K(m_l). \end{equation}
Let $m=\bigoplus m_l:\bigoplus \limits_{\mathrm{ind}(\mathcal{C})} \mathsf{mod}(k) \rightarrow \mathcal{C}^{\oplus}$.  Then we may rewrite (\ref{alpha2}) as
\begin{equation}
\label{ARmatrix}
\alpha = K(m) \circ (T \cdot \mathrm{id}_{K(k)})
\end{equation}
In this sense $\alpha $ is defined by the matrix $T$.

Identifying $K_0(\mathcal{C}^{\oplus})$ with $\mathbb{Z}^{n+1}$ via the basis $\{[M_0],\ldots ,[M_n]\}$, we see that $K_0(m_l):K_0(\mathsf{mod}(k)) = \mathbb{Z} \rightarrow \mathbb{Z}^{n+1}$ is just the inclusion into the $l$th coordinate.  Therefore $\pi _0(\alpha)$, as a map between free abelian groups, is defined by $T$.  This is the description of $\pi _0(\alpha)$ originally given in \cite[$\S$ 4.3]{AusRei86}.

Let $T'$ be the $n \times n$ integer matrix obtained from $T$ by deleting its top row, which corresponds to the indecomposable $M_0=R$.  Just as $T$ described $\alpha$, $T'$ describes the map $\alpha ' = \rho \circ \alpha :\bigvee \limits_{\mathrm{ind}_0(\mathcal{C})}K(k) \rightarrow X$ from (\ref{MFtriangle}).  Recall from (\ref{eq:2}) the homotopy fiber sequence
\[ \xymatrix{
K(R) \ar[r]^{K(r)}	& K(\mathcal{C}^{\oplus}) \ar[r]^{\rho}	& X
} \]
Since $m_0:\mathsf{mod}(k) \rightarrow \mathcal{C}^{\oplus}$ factors through $r$, $\rho \circ K(m_0)$ is nulhomotopic and therefore the $j$th component of $\alpha '$ is given by
\begin{align}
(\alpha ')_j	& = \rho \circ \alpha _j \nonumber \\
		& = \rho \circ \sum \limits_{l \geq 0} T_{lj} K(m_l) \nonumber \\
		& = \rho \circ \sum \limits_{l>0} T'_{lj} K(m_l) \label{alpha'}
\end{align}
Let $m'=\bigoplus \limits_{l>0} m_l:\bigoplus \limits_{\mathrm{ind}_0(\mathcal{C})} \mathsf{mod}(k) \rightarrow \mathcal{C}^{\oplus}$.  Then we may rewrite (\ref{alpha'}) as
\begin{equation}
\label{ARmatrix'}
\alpha' = \rho \circ K(m') \circ (T' \cdot \mathrm{id}_{K(k)})
\end{equation}

\section{An Example}
\label{an-example}
Let $R$ be a 1-dimensional singularity of type $A_{2n}$, i.e. $R=k[[t^2,t^{2n+1}]]$ with $k$ an algebraically closed field.  The MCM $R$-modules are the modules $M_i=k[[t^2,t^{2(n-i)+1}]]$, $i=0,\ldots ,n$, on which $R$ acts by multiplication.  The Auslander-Reiten quiver of $R$ is then
\begin{equation} \label{ARquiver} \xymatrix{
[R] \ar@<.5ex>[r]	& [M_1] \ar@<.5ex>[l]^{t^2} \ar@<.5ex>[r] \ar@(dr,dl)@{..}[]	& \cdots \ar@<.5ex>[l]^{t^2} \ar@<.5ex>[r]	& [M_n] \ar@<.5ex>[l]^{t^2} \ar@(ur,dr)[]^t \ar@(dr,dl)@{..}[]
} \end{equation}
(each right arrow is the inclusion $x \mapsto x$). % Each quotient functor $p_i:\mathcal{C}^{\oplus} \rightarrow \mathcal{C}^{\oplus}/\left<M_1,\ldots ,\hat{M_i},\ldots ,M_{(n-1)/2}\right>$ has a section.
In this section we will try to describe, as explicitly as possible, the groups $K_1'(R)$ and $K_1(\mathsf{MF})$, using the techniques developed elsewhere in this paper.  These descriptions appear in Proposition \ref{decompositions}.

Let $\mathcal{B}_i=\mathsf{add}(M_0,\ldots ,M_i) \subset \mathcal{C}^{\oplus}$, and let $\mathcal{B}_{-1}=\{0\} \subset \mathcal{C}^{\oplus}$.  Let $f_i:\mathcal{B}_{i-1} \rightarrow \mathcal{B}_{i}$ denote the inclusion functor and $p_i:\mathcal{B}_i \rightarrow \mathcal{B}_i/\mathcal{B}_{i-1}$ the quotient functor; let $F_i$ denote the image in $K_1(\mathcal{C}^{\oplus})$ of $K_1(\mathcal{B}_i)$.  Then the solid diagram below commutes and has exact rows; the top row is exact by Theorem \ref{K1}.
\begin{equation} \label{K1diagram} \xymatrix{
		& K_1(\mathcal{B}_{i-1}) \ar[r]^{K_1(f_i)} \ar@{>>}[d]	& K_1(\mathcal{B}_i) \ar[r]^-{K_1(p_i)} \ar@{>>}[d]		& K_1(\mathcal{B}_i/\mathcal{B}_{i-1}) \ar[r] \ar[d]	& 0 \\
0 \ar[r]	& F_{i-1} \ar[r]						& F_i \ar[r] \ar@{-->}[ru]					& F_i/F_{i-1} \ar[r]				& 0
} \end{equation}
Moreover, in the diagram below, the right vertical arrow is an equivalence when $i>0$.
\[ \xymatrix{
\mathcal{B}_i \ar[r] \ar[d]	& \mathcal{B}_i/\mathcal{B}_{i-1} \ar[d]^{\simeq} \\
\mathcal{C}^{\oplus} \ar[r]	& {\mathcal{C}^{\oplus}}/\mathsf{add}(\bigoplus _{l \neq i} M_l)
} \]
Therefore for $i>0$, the map $K_1(\mathcal{B}_i) \rightarrow K_1(\mathcal{B}_i/\mathcal{B}_{i-1})$ factors through $F_i$ as indicated in (\ref{K1diagram}).  It follows that the right vertical map in (\ref{K1diagram}) is an isomorphism.  So there are short exact sequences
\begin{equation} \label{K1sequence} \xymatrix{
0 \ar[r]	& F_{i-1} \ar[r]	& F_i \ar[r]	& K_1(\mathcal{B}_i/\mathcal{B}_{i-1}) \ar[r]	& 0
} \end{equation}

For each $i$, $\mathcal{B}_{i}/\mathcal{B}_{i-1}$ has one nonzero indecomposable $M_{i}$.  The ring homomorphism $k[[t^2,t^{2(n-i)+1}]] \rightarrow \mathrm{End}_R(M_i)$, sending $f$ to the multiplication-by-$f$ endomorphism, is an isomorphism for each $i$.  Using this and the AR quiver (\ref{ARquiver}), we see that for $i>0$,
\[
\mathrm{End}_{\mathcal{B}_{i}/\mathcal{B}_{i-1}}M_i = (\mathrm{End}_R M_i)/(t^2)= \left\{
\begin{array}{l l}
k		& \mbox{if } 0<i<n \\
k[t]/(t^2)	& \mbox{if } i=n
\end{array}\right .
\]
so that $\mathcal{B}_i/\mathcal{B}_{i-1} \simeq \mathsf{mod}(k)$ if $0<i<n$, and $\mathcal{B}_n/\mathcal{B}_{n-1} \simeq \mathsf{proj}(k[t]/(t^2))$.  Let $k^+$ be the additive abelian group of $k$.  Then $(k[t]/(t^2))^{\times} \cong k^{\times} \oplus k^+$ via the identification $\alpha(1 + \beta t) \mapsto (\alpha, \beta )$.  Therefore
\[
K_1(\mathcal{B}_i/\mathcal{B}_{i-1}) = K_1(\mathrm{End}_{\mathcal{B}_i/\mathcal{B}_{i-1}}M_i) = \left\{
\begin{array}{l l}
k^{\times}		& \mbox{if } 0<i<n \\
k^{\times} \oplus k^+	& \mbox{if } i=n
\end{array}
\right .
\]

So, according to the sequences (\ref{K1sequence}) and the descriptions above of the groups $K_1(\mathcal{B}_i/\mathcal{B}_{i-1})$, there is a filtration
\[ 0 \subset F_0 \subset \cdots \subset F_n = K_1(\mathcal{C}^{\oplus})\]
such that
\begin{itemize}
\item[1.]
$F_0$ is a quotient of $K_1(R)=R^{\times}$.
\item[2.]
$F_i/F_{i-1} \cong k^{\times}$ for $i=1, \ldots ,n-1$.
\item[3.]
$F_n/F_{n-1} \cong k^{\times} \oplus k^+$.
\end{itemize}
The group $k^{\times}$ appears in this filtration as a subquotient of $K_1(\mathcal{C}^{\oplus})$ $n+1$ times: it appears as a subobject of $F_0$ (we shall soon see that the composition $k^{\times} \rightarrow R^{\times} \rightarrow F_0$ is monic); it appears $n-1$ times as $F_i/F_{i-1}$, $0<i<n$; and it appears as a summand of $F_n/F_{n-1}$.  We next argue that each of these copies of $k^{\times}$ is in fact a summand of $K_1(\mathcal{C}^{\oplus})$.

Let
\[\begin{array}{rcl} m_i:\mathsf{mod}(k) \rightarrow \mathcal{C}^{\oplus}& \mbox{and} & j_i:\mathsf{mod}(k) \rightarrow \mathcal{B}_i\end{array}\] be the $k$-linear functors which (both) send $k$ to $M_i$, and let 
\[\begin{array}{rcl}m=\bigoplus m_i:(\mathsf{mod}(k))^{\oplus n+1} \rightarrow \mathcal{C}^{\oplus}& \mbox{and} & m'=\bigoplus \limits_{i>0} m_i:(\mathsf{mod}(k))^{\oplus n} \rightarrow \mathcal{C}^{\oplus}.\end{array}\]
Let $q:\mathcal{C}^{\oplus} \rightarrow \mathcal{C}^{\oplus}/\mathrm{rad}(\mathcal{C}^{\oplus})$ be the quotient functor and
\[\mathrm{proj}_i:\mathcal{C}^{\oplus}/\mathrm{rad}(\mathcal{C}^{\oplus}) \simeq (\mathsf{mod}(k))^{\oplus n+1} \rightarrow \mathsf{mod}(k)\]
the $i$th projection.  Then the diagram below commutes.
\[ \xymatrix{
\mathsf{mod}(k) \ar[r]^= \ar[d]_{j_i} \ar@/_2pc/[dd]_{m_i}	& \mathsf{mod}(k) \\
\mathcal{B}_i \ar[r]^{p_i} \ar@{^(->}[d]				& \mathcal{B}_i/\mathcal{B}_{i-1} \ar[d] \\
\mathcal{C}^{\oplus} \ar[r]^q					& \mathcal{C}^{\oplus}/\mathrm{rad}(\mathcal{C}^{\oplus}) \ar@/_2pc/[uu]_{\mathrm{proj}_i}
} \]
From this we deduce the following.
\begin{itemize}
\item[1.]
The map $k^{\times} \rightarrow F_0$ induced by $j_0$ has a left inverse which factors through $K_1(\mathcal{C}^{\oplus})$.  Therefore $k^{\times}$ is embedded in $F_0$ in such a way that is a summand of $K_1(\mathcal{C}^{\oplus})$.
\item[2.]
For $0<i<n$, $K_1(j_i)$ embeds $k^{\times}$ as a summand of $K_1(\mathcal{C}^{\oplus})$ which is contained in $F_i$ and projects isomorphically onto $F_i/F_{i-1}$.
\item[3.]
$K_1(j_n)$ embeds $k^{\times}$ as a summand of $K_1(\mathcal{C}^{\oplus})$, and $K_1(p_n j_n):k^{\times} \rightarrow k^{\times} \oplus k^+$ is the inclusion into the first coordinate.
\end{itemize}

We compile all of this data in the following commuting diagram, in which all rows are split short exact sequences and all columns are exact.
\begin{equation} \label{decompositiondiagram} \xymatrix{
0 \ar[r]	& k^{\times} \ar[r] \ar[d] \ar[dr]^{K_1(m_0)}	& R^{\times} \ar[r] \ar[d]^{K_1(r)}			& R^{\times}/k^{\times} \ar[r] \ar[d]	& 0 \\
0 \ar[r]	& (k^{\times})^{n+1} \ar[r]^{K_1(m)} \ar[d]		& K_1(\mathcal{C}^{\oplus}) \ar[r] \ar[d]^{\pi _1(\rho)}	& \mathrm{coker}\,K_1(m) \ar[r] \ar[d]	& 0 \\
		& (k^{\times})^n \ar[d]				& K_1(X) \ar[d]						& k^+ \ar[d]				& \\
		& 0						& 0							& 0					&
} \end{equation}
Note that $\pi _1(\rho)$ is surjective because there is an exact sequence
\[ \xymatrix @R=0.5pc {
K_1(\mathcal{C}^{\oplus}) \ar[r]^{\pi _1(\rho)}	& K_1(X) \ar[r]	& K_0(R) \ar@{=}[d] \ar[r]	& K_0(\mathcal{C}^{\oplus}) \ar@{=}[d] \\
						& 		& \mathbb{Z} \ar@{^(->}[r]	& \mathbb{Z}^{n+1}
} \]
Therefore the terms in the third row of (\ref{decompositiondiagram}) fit into an exact sequence
\[ \xymatrix{
(k^{\times})^n \ar[rr]^{\pi _1(\rho) \circ K_1(m')}	& & K_1(X) \ar[r]	& k^+ \ar[r]	& 0.
} \]
We next argue that the first map in this sequence is injective.  For this it suffices to show that $\mathrm{im}\,K_1(m) \cap \mathrm{im}\,K_1(r) \subset \mathrm{im}\,K_1(m_0)$.  For $i \neq 0$, the composition $\xymatrix{\mathsf{proj}(R) \ar[r]^r	& \mathcal{C}^{\oplus} \ar[r]^-{\mathrm{proj}_i \circ q}	& \mathsf{mod}(k) }$ is zero, so $\mathrm{im}\,K_1(r) \subset \mathrm{ker}\,K_1(\mathrm{proj_i} \circ q)$ and therefore
\begin{align*}
\mathrm{im}\,K_1(m) \cap \mathrm{im}\,K_1(r)	& \subset \mathrm{im}\,K_1(m) \cap (\bigcap _{i>0} \mathrm{ker}\,K_1(\mathrm{proj}_i \circ q)) \\
						& = K_1(m)(\bigcap _{i>0}\mathrm{ker}\,K_1(\mathrm{proj}_i \circ q \circ m)) \\
						& = \mathrm{im}\,K_1(m_0),\end{align*}
as desired.

Now using (\ref{eq:4}) and (\ref{ARmatrix}) one obtains an exact sequence
\begin{equation} \label{K1'sequence} \xymatrix @R=0.7pc {
(k^{\times})^n \ar[rr]^-{K_1(m) \circ (T \cdot \mathrm{id}_{k^{\times}})}	& & K_1(\mathcal{C}^{\oplus}) \ar[r]	& K_1'(R) \ar[r]	& (K_0(k))^n \ar[r] \ar@{=}[d]	& K_0(\mathcal{C}^{\oplus}) \ar@{=}[d] \\
									& & 					& 			& \mathbb{Z}^{n} \ar[r]^{T}	& \mathbb{Z}^{n+1}
} \end{equation}
Similarly, by (\ref{MFtriangle}) and (\ref{ARmatrix'}) there is an exact sequence
\begin{equation} \label{K1MFsequence} \xymatrix @R=0.7pc {
(k^{\times})^n \ar[rrr]^-{\pi _1 (\rho) \circ K_1(m') \circ (T' \cdot \mathrm{id}_{k^{\times}})}	& & & K_1(X) \ar[r]	& K_1(\mathsf{MF}) \ar[r]	& (K_0(k))^n \ar[r] \ar@{=}[d]	& K_0(X) \ar@{=}[d] \\
												& & & 			&	 			& \mathbb{Z}^{n} \ar[r]^{T'}	& \mathbb{Z}^{n}
} \end{equation}
The matrices $T$ and $T'$ can be computed directly from the Auslander-Reiten quiver (\ref{ARquiver}); keeping in mind our convention that $T$ has a zeroth row but no zeroth column, these matrices are
\[ T = \left( \begin{array}{rrrcrr}
-1	& 0		& 0	& 	& 	& \\
2	&- 1		& 0	& 	& 	& \\
-1	& 2		& -1	& \cdots& 	& \\
0	& -1		& 2	& 	& 0	& \\
0	& 0		& -1	& 	& -1	& 0 \\
0	& 0		& 0	& \ddots& 2	& -1 \\
	& \vdots	& 	& 	& -1	& 1
\end{array} \right) \mbox{; explicitly, }T_{lj} = \left\{ \begin{array}{l l}
-1	& \mbox{if } j=l \pm 1 \\
2	& \mbox{if } j=l<n \\
1	& \mbox{if } j=l=n \\
0	& \mbox{otherwise}
\end{array}\right . \]
\[ T' = \left( \begin{array}{rrrcrr}
2	&- 1		& 0	& 	& 	& \\
-1	& 2		& -1	& \cdots& 	& \\
0	& -1		& 2	& 	& 0	& \\
0	& 0		& -1	& 	& -1	& 0 \\
0	& 0		& 0	& \ddots& 2	& -1 \\
	& \vdots	& 	& 	& -1	& 1
\end{array} \right) . \]
One proves easily by induction that $\mathrm{det}\,T'>0$, so the last map in each sequence (\ref{K1'sequence}) and (\ref{K1MFsequence}) is injective.  Since these sequences are exact, it follows that
\begin{align*}
K_1'(R) & \cong \mathrm{coker}[K_1(m) \circ (T \cdot \mathrm{id}_{k^{\times}})] \\
K_1(\mathsf{MF}) & \cong \mathrm{coker}[\pi _1(\rho) \circ K_1(m') \circ (T' \cdot \mathrm{id}_{k^{\times}})]
\end{align*}
Together with the data from (\ref{decompositiondiagram}), we obtain the following decompositions. \vspace{1.6pc}
\begin{prop}
\
\label{decompositions}
\begin{itemize}
\item[1.]
There is an abelian group $G\ (=\mathrm{coker}\,K_1(m))$ such that
\[
K_1(\mathcal{C}^{\oplus}) \cong \mathrm{coker}(T\cdot \mathrm{id}_{k^{\times}}) \oplus G
\]
and $G$ fits into an exact sequence
\[ \xymatrix{
R^{\times}/k^{\times} \ar[r]	& G \ar[r]	& k^+ \ar[r]	& 0.
} \]
\item[2.]
There is a short exact sequence
\[ \xymatrix{
0 \ar[r]	& \mathrm{coker}(T'\cdot \mathrm{id}_{k^{\times}}) \ar[r]	& K_1(\mathsf{MF}) \ar[r]	& k^+ \ar[r]	& 0.
} \]
\end{itemize}
\end{prop}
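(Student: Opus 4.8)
The plan is to derive both statements, by elementary homological algebra of abelian groups, from the two cokernel identifications obtained just above --- $K_1'(R)\cong\mathrm{coker}[K_1(m)\circ(T\cdot\mathrm{id}_{k^{\times}})]$, coming from the fibre sequence $(\ref{eq:4})$ together with $(\ref{ARmatrix})$, and $K_1(\mathsf{MF})\cong\mathrm{coker}[\pi_1(\rho)\circ K_1(m')\circ(T'\cdot\mathrm{id}_{k^{\times}})]$, coming from $(\ref{MFtriangle})$ and $(\ref{ARmatrix'})$ --- together with the exactness recorded in the diagram $(\ref{decompositiondiagram})$.

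For part 1, the crucial point is that $K_1(m)\colon(k^{\times})^{n+1}\to K_1(\mathcal{C}^{\oplus})$ is a \emph{split} monomorphism. Indeed, under the identification $\mathcal{C}^{\oplus}/\mathrm{rad}(\mathcal{C}^{\oplus})\simeq(\mathsf{mod}(k))^{\oplus n+1}$ one has $q\circ m\simeq\mathrm{id}$, so $K_1(q)$ is a left inverse of $K_1(m)$; this is precisely the splitting of the middle row of $(\ref{decompositiondiagram})$. Writing $K_1(\mathcal{C}^{\oplus})=\mathrm{im}\,K_1(m)\oplus C$ with $C=\ker K_1(q)\cong\mathrm{coker}\,K_1(m)=:G$, and using that $K_1(m)$ is injective, one obtains for any homomorphism $\psi$ with target $(k^{\times})^{n+1}$ an isomorphism $\mathrm{coker}(K_1(m)\circ\psi)\cong\mathrm{coker}(\psi)\oplus G$. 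Taking $\psi=T\cdot\mathrm{id}_{k^{\times}}$ yields $K_1'(R)\cong\mathrm{coker}(T\cdot\mathrm{id}_{k^{\times}})\oplus G$, the decomposition of part 1; and the remaining assertion about $G$ is literally the third column of $(\ref{decompositiondiagram})$, namely $R^{\times}/k^{\times}\to\mathrm{coker}\,K_1(m)\to k^{+}\to 0$, which is exact because the columns of that diagram are.

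For part 2, I would start from the fact, already established in the discussion around $(\ref{decompositiondiagram})$, that its third row fits into a \emph{short} exact sequence $0\to(k^{\times})^{n}\xrightarrow{\ \iota\ }K_1(X)\to k^{+}\to 0$ with $\iota=\pi_1(\rho)\circ K_1(m')$ (exactness at $K_1(X)$ and surjectivity onto $k^{+}$ follow by diagram chase from the exact rows and outer columns of $(\ref{decompositiondiagram})$, while injectivity of $\iota$ is exactly the content of the inclusion $\mathrm{im}\,K_1(m)\cap\mathrm{im}\,K_1(r)\subset\mathrm{im}\,K_1(m_0)$ checked there). Substituting $\pi_1(\rho)\circ K_1(m')=\iota$ into the cokernel description turns the relevant group into $K_1(\mathsf{MF})\cong K_1(X)/\iota(\mathrm{im}(T'\cdot\mathrm{id}_{k^{\times}}))$. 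The chain of subgroups $\iota(\mathrm{im}(T'\cdot\mathrm{id}_{k^{\times}}))\subseteq\iota((k^{\times})^{n})\subseteq K_1(X)$ then yields a short exact sequence whose subgroup is $\iota((k^{\times})^{n})/\iota(\mathrm{im}(T'\cdot\mathrm{id}_{k^{\times}}))$ --- isomorphic to $(k^{\times})^{n}/\mathrm{im}(T'\cdot\mathrm{id}_{k^{\times}})=\mathrm{coker}(T'\cdot\mathrm{id}_{k^{\times}})$ since $\iota$ is injective --- and whose quotient is $K_1(X)/\iota((k^{\times})^{n})=k^{+}$. This is exactly the sequence $0\to\mathrm{coker}(T'\cdot\mathrm{id}_{k^{\times}})\to K_1(\mathsf{MF})\to k^{+}\to 0$ of part 2.

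I do not anticipate a genuine obstacle: the proposition simply repackages facts already in hand. The one subtlety worth flagging is that $T'\cdot\mathrm{id}_{k^{\times}}$ (and likewise $T\cdot\mathrm{id}_{k^{\times}}$) need not be injective --- $k^{\times}$ carries torsion, and an integer matrix of nonzero determinant applied coordinatewise to $k^{\times}$ can have roots of unity in its kernel --- so part 2 must be run through the injectivity of $\iota$ and part 1 through the split--monomorphism property of $K_1(m)$, not by manipulating generators of $K_1(\mathcal{C}^{\oplus})$ directly. Finally, a direct computation gives $\det T'=1$, so that $\mathrm{coker}(T'\cdot\mathrm{id}_{k^{\times}})=0$ and $\mathrm{coker}(T\cdot\mathrm{id}_{k^{\times}})\cong k^{\times}$, whence the two decompositions sharpen to $K_1'(R)\cong k^{\times}\oplus G$ and $K_1(\mathsf{MF})\cong k^{+}$.
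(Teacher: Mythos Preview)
Your argument is correct and is essentially a clean recapitulation of the paper's own derivation: the paper's ``proof'' is the discussion in Section~\ref{an-example} culminating in the identifications $K_1'(R)\cong\mathrm{coker}[K_1(m)\circ(T\cdot\mathrm{id}_{k^{\times}})]$ and $K_1(\mathsf{MF})\cong\mathrm{coker}[\pi_1(\rho)\circ K_1(m')\circ(T'\cdot\mathrm{id}_{k^{\times}})]$, combined with the split middle row and exact third row of diagram~(\ref{decompositiondiagram}); you have reproduced exactly this, with the same ingredients (the splitting $K_1(q)\circ K_1(m)=\mathrm{id}$, the injectivity of $\iota=\pi_1(\rho)\circ K_1(m')$, and the third column for the description of $G$).

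One point to flag: the proposition as printed has $K_1(\mathcal{C}^{\oplus})$ on the left-hand side of part~1, but what you prove---and what the paper's preceding computation actually yields---is the decomposition of $K_1'(R)$. The printed statement cannot be correct in general (since $K_1(\mathcal{C}^{\oplus})\cong(k^{\times})^{n+1}\oplus G$ while $\mathrm{coker}(T\cdot\mathrm{id}_{k^{\times}})\cong k^{\times}$), so this is evidently a typo in the paper; your version with $K_1'(R)$ is the intended one.

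Your final remark goes slightly beyond the paper: the paper only checks $\det T'>0$, whereas you observe $\det T'=1$ (indeed $D_n=E_{n-1}-E_{n-2}=n-(n-1)=1$ with $E_k$ the type-$A_k$ Cartan determinant), which yields the sharper conclusions $K_1(\mathsf{MF})\cong k^{+}$ and $K_1'(R)\cong k^{\times}\oplus G$.
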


\appendix
\section{\texorpdfstring{$K_1\ $}{%K1\ 
}%
 Localization for a Krull-Schmidt Category}
\label{loca-for-a-krul-schm-cate}
\subsection*{$K_1$ of an Additive Category}
Let $\mathcal{A}$ be an additive category.  We shall consider $\mathcal{A}$ to be an exact category with the split exact structure, in which the conflations are the sequences isomorphic to a direct sum sequence $\xymatrix @C=0.8pc{A\, \ar@{>->}[r] & A \oplus B \ar@{->>}[r] & B}$.  Set $\mathrm{Aut}(\mathcal{A})$ to be the category whose objects are pairs $(A, \phi)$ with $A$ in $\mathcal{A}$ and $\phi \in \mathrm{Aut}_{\mathcal{A}}A$, and whose morphisms are defined by
\[\mathrm{Hom}_{\mathrm{Aut}(\mathcal{A})}((A,\phi ),(A',\phi ')) = \{f \in \mathrm{Hom}_{\mathcal{A}}(A,A') | \phi ' f = f \phi\}.\]
$\mathrm{Aut}(\mathcal{A})$ has an exact structure in which a sequence is a conflation iff it is a conflation in $\mathcal{A}$.  Recall from (\cite[$\S$3]{She82}) that there is a natural surjection \[\xymatrix {K_0(\mathrm{Aut}(\mathcal{A})) \ar@{>>}[r] & K_1(\mathcal{A})}\] whose kernel is generated by elements of the form $[(A,\alpha \beta)] - [(A,\alpha)] - [(A, \beta)]$.  Denote by $[A,\phi ]$, or just $[\phi]$, the image in $K_1(\mathcal{A})$ of the $K_0$-class of the object $(A, \phi)$ of $\mathrm{Aut}(\mathcal{A})$.

\begin{lem}
Let $f:A \rightarrow A'$ be a morphism in $\mathcal{A}$, and let $\phi = \left( \begin{smallmatrix}1_A & 0 \\ f & 1_{A'} \end{smallmatrix} \right) \in \mathrm{Aut}_{\mathcal{A}}(A \oplus A')$.  Then in $K_1(\mathcal{A})$, $[\phi ] = 0$.  Similarly, given any $g:A' \rightarrow A$, $\left[ \left( \begin{smallmatrix} 1_A & g \\ 0 & 1_{A'}\end{smallmatrix} \right) \right] = 0$.
\end{lem}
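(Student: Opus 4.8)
The plan is to exhibit $(A\oplus A',\phi)$ as the middle term of a conflation in $\mathrm{Aut}(\mathcal{A})$ whose two outer terms carry the identity automorphism, and then transport the resulting $K_0$-relation to $K_1(\mathcal{A})$ along the surjection $K_0(\mathrm{Aut}(\mathcal{A})) \twoheadrightarrow K_1(\mathcal{A})$.

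First I would let $j = \left(\begin{smallmatrix}0\\1_{A'}\end{smallmatrix}\right):A' \to A\oplus A'$ be the summand inclusion and $q = \left(\begin{smallmatrix}1_A & 0\end{smallmatrix}\right):A\oplus A' \to A$ the complementary projection. A direct matrix computation gives $\phi j = j$ and $q\phi = q$, so $j$ and $q$ are morphisms $(A',1_{A'}) \to (A\oplus A',\phi)$ and $(A\oplus A',\phi) \to (A,1_A)$ in $\mathrm{Aut}(\mathcal{A})$. The underlying sequence $A' \hookrightarrow A\oplus A' \twoheadrightarrow A$ is (isomorphic to) a direct-sum sequence, hence a conflation in $\mathcal{A}$, and therefore $(A',1_{A'}) \hookrightarrow (A\oplus A',\phi) \twoheadrightarrow (A,1_A)$ is a conflation in $\mathrm{Aut}(\mathcal{A})$ by the definition of its exact structure.

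Next, additivity of $K_0$ of an exact category along this conflation yields $[(A\oplus A',\phi)] = [(A',1_{A'})] + [(A,1_A)]$ in $K_0(\mathrm{Aut}(\mathcal{A}))$. Applying the natural surjection onto $K_1(\mathcal{A})$ and using that $[X,1_X] = 0$ in $K_1(\mathcal{A})$ for every object $X$ — which follows by setting $\alpha = \beta = 1_X$ in the generator $[(X,\alpha\beta)] - [(X,\alpha)] - [(X,\beta)]$ of the kernel — we obtain $[\phi] = [1_{A'}] + [1_A] = 0$. For the upper-triangular automorphism $\left(\begin{smallmatrix}1_A & g\\0 & 1_{A'}\end{smallmatrix}\right)$ one argues symmetrically, now using the inclusion $\left(\begin{smallmatrix}1_A\\0\end{smallmatrix}\right):A\to A\oplus A'$ and the projection $\left(\begin{smallmatrix}0 & 1_{A'}\end{smallmatrix}\right):A\oplus A'\to A'$ to build the conflation $(A,1_A)\hookrightarrow (A\oplus A',\psi)\twoheadrightarrow (A',1_{A'})$, and concludes in the same way.

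I do not expect any genuine obstacle. The only point needing a moment's attention is the bookkeeping that ensures $\phi$ restricts to the identity on one summand and descends to the identity on the complementary quotient — i.e. ordering the two summands in the conflation correctly — together with the observation that no independent verification of exactness is required, since conflations in $\mathrm{Aut}(\mathcal{A})$ are by definition precisely the sequences that are conflations in $\mathcal{A}$.
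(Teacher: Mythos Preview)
Your proposal is correct and follows exactly the same approach as the paper: exhibit the conflation $(A',1_{A'}) \rightarrowtail (A\oplus A',\phi) \twoheadrightarrow (A,1_A)$ in $\mathrm{Aut}(\mathcal{A})$ and read off $[\phi]=[1_{A'}]+[1_A]=0$. You have simply supplied a bit more detail (the explicit matrices for $j$ and $q$, the verification $\phi j=j$ and $q\phi=q$, and the reason $[1_X]=0$) than the paper records.
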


\begin{proof}
There is a conflation in $\mathrm{Aut}(\mathcal{A})$
\[ \xymatrix{
(A',1_{A'})\ \ar@{>->}[r]	& (A \oplus A', \phi ) \ar@{>>}[r]	& (A,1_A)
} \]
so that $[\phi ] = [1_{A'}] + [1_A] = 0$.  The second statement is proved the same way.
\end{proof}

\begin{rem}
\label{reduction}
Suppose $\phi$, $\psi \in \mathrm{Aut}_{\mathcal{A}}(\bigoplus A_i)$ are row- or column-equivalent -- that is, the matrices defining $\phi$ and $\psi$ differ only up to left- or right-multiplication by elementary matrices, which are identity along the diagonal and zero off the diagonal except in one entry.  Then using the lemma above, one sees easily that $[\phi] = [\psi]$.
\end{rem}
\subsection*{Automorphisms in a Krull-Schmidt Category}
% \begin{defn}
% \label{radical}
% The \emph{radical} of the additive category $\mathcal{A}$ is the two-sided ideal $\mathrm{rad}(\mathcal{A}) \subset \mathcal{A}$ defined by
% \[
% \mathrm{Hom}_{\mathrm{rad}(\mathcal{A})}(X,Y)=\{f \in \mathrm{Hom}_{\mathcal{A}}(X,Y)|fg \in \mathrm{rad}(\mathrm{End}_{\mathcal{A}}Y)\mathrm{\ for\ all\ }g \in \mathrm{Hom}_{\mathcal{A}}(Y,X)\}
% \]
% \end{defn}

The following properties of $\mathrm{rad}(\mathcal{A})$ are easy to prove.
\begin{itemize}
\item[1.]
$\mathrm{Hom}_{\mathrm{rad}(\mathcal{A})}(X,X)$ is the Jacobson radical $\mathrm{rad}(\mathrm{End}_{\mathcal{A}}(X))$ of $\mathrm{End}_{\mathcal{A}}(X)$.
\item[2.]
If $X=\bigoplus X_i$, $Y=\bigoplus Y_j$, then $\mathrm{Hom}_{\mathrm{rad}(\mathcal{A})}(X,Y)=\bigoplus \limits_{i,j} \mathrm{Hom}_{\mathrm{rad}(\mathcal{A})}(X_i,Y_j)$.
\item[3.]
If $\mathcal{A}$ is Krull-Schmidt and $X$ and $Y$ are nonisomorphic indecomposables, $\mathrm{Hom}_{\mathrm{rad}(\mathcal{A})}(X,Y)=\mathrm{Hom}_{\mathcal{A}}(X,Y)$.
\end{itemize}
The next lemma gives us an easy criterion for recognizing automorphisms in a Krull-Schmidt additive category.
\begin{lem}
\label{invertibility-criterion}
Suppose $\mathcal{A}$ is Krull-Schmidt and $A = \bigoplus \limits_{i=1}^d A_i ^{n_i}$ with $A_1, \ldots , A_d$ pairwise nonisomorphic indecomposables in $\mathcal{A}$.  Suppose $\phi \in \mathrm{End}_{\mathcal{A}}A$, and denote by $\phi _{ij}$ the induced morphism $A_j^{n_j} \rightarrow A_i^{n_i}$.  Then $\phi$ is invertible iff for all $i$, $\phi _{ii}$ is invertible.
\end{lem}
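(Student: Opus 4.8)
The plan is to translate the question into one about units in the endomorphism ring $E := \mathrm{End}_{\mathcal{A}}(A)$ and to exploit that the ``off-diagonal part'' of $\phi$ lands in the Jacobson radical $\mathrm{rad}(E)$. Write $\delta \in E$ for the endomorphism whose block decomposition has diagonal blocks $\phi_{11}, \dots, \phi_{dd}$ and all off-diagonal blocks zero, and put $\rho = \phi - \delta$. The first step is to verify $\rho \in \mathrm{rad}(E)$: property 1 of $\mathrm{rad}(\mathcal{A})$ identifies $\mathrm{rad}(E)$ with $\mathrm{Hom}_{\mathrm{rad}(\mathcal{A})}(A,A)$, property 2 decomposes this as $\bigoplus_{i,j} \mathrm{Hom}_{\mathrm{rad}(\mathcal{A})}(A_i^{n_i}, A_j^{n_j})$, and for $i \neq j$ property 3 (applied blockwise, since $A_i$ and $A_j$ are nonisomorphic indecomposables) shows the corresponding summand is all of $\mathrm{Hom}_{\mathcal{A}}(A_i^{n_i}, A_j^{n_j})$. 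As $\rho$ is supported off the diagonal, $\rho \in \mathrm{rad}(E)$.

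For the implication ``each $\phi_{ii}$ invertible $\Rightarrow$ $\phi$ invertible'', I would note that $\delta$ is then a unit of $E$, with inverse the block-diagonal endomorphism assembled from the $\phi_{ii}^{-1}$. Writing $\phi = \delta(1 + \delta^{-1}\rho)$ and using that $\delta^{-1}\rho \in \mathrm{rad}(E)$ (an ideal), the factor $1 + \delta^{-1}\rho$ is a unit of $E$, hence so is $\phi$; that is, $\phi$ is an automorphism of $A$.

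For the converse, I would pass to the quotient ring $\bar{E} := E/\mathrm{rad}(E)$. Since $\mathrm{End}_{\mathcal{A}}(A_i^{n_i}) \cong M_{n_i}(\mathrm{End}_{\mathcal{A}}(A_i))$ with $\mathrm{End}_{\mathcal{A}}(A_i)$ local, the same bookkeeping (properties 1 and 2, together with $\mathrm{rad}(M_{n_i}(E_i)) = M_{n_i}(\mathrm{rad}(E_i))$) yields a ring isomorphism $\bar{E} \cong \prod_{i=1}^d M_{n_i}(\kappa_i)$ with $\kappa_i = \mathrm{End}_{\mathcal{A}}(A_i)/\mathrm{rad}$ a division ring, under which the class of $\phi$ coincides with that of $\delta$, namely the tuple of reductions $(\bar\phi_{11}, \dots, \bar\phi_{dd})$. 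If $\phi$ is an automorphism its image in $\bar{E}$ is a unit, forcing each $\bar\phi_{ii}$ to be a unit of $M_{n_i}(\kappa_i)$; lifting a one-sided inverse back to $M_{n_i}(\mathrm{End}_{\mathcal{A}}(A_i))$, the products $\phi_{ii}\psi$ and $\psi'\phi_{ii}$ then differ from the identity by elements of the radical and so are units, whence $\phi_{ii}$ is a unit of $\mathrm{End}_{\mathcal{A}}(A_i^{n_i})$, i.e. an automorphism.

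The bulk of the argument is formal once the first step is in place; the only thing requiring care is the identification of $\mathrm{rad}(E)$ — pinning down that it consists of the full off-diagonal Hom-groups together with the radicals of the diagonal endomorphism rings — and the routine ``units lift modulo the radical'' fact for the matrix rings $M_{n_i}(\mathrm{End}_{\mathcal{A}}(A_i))$. I expect that bookkeeping, modest as it is, to be the main obstacle.
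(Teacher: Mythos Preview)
Your proposal is correct and follows essentially the same approach as the paper: identify $\mathrm{rad}(E)$ via the three listed properties of $\mathrm{rad}(\mathcal{A})$, obtain $E/\mathrm{rad}(E)\cong\prod_i \mathrm{End}_{\mathcal{A}}(A_i^{n_i})/\mathrm{rad}$, and then use that units lift modulo the Jacobson radical. The paper compresses both directions into a single chain of ``iff''s through the quotient, whereas you treat the forward direction by the explicit factorization $\phi=\delta(1+\delta^{-1}\rho)$; this is a purely cosmetic difference.
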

\begin{proof}
Let $R=\mathrm{End}_{\mathcal{A}}A$ and $R_i = \mathrm{End}_{\mathcal{A}}(A_i^{n_i})$.  Using the above properties of $\mathrm{rad}(\mathcal{A})$, one sees that the Jacobson radical of $R$ is 
\[
\mathrm{rad}(R) = \mathrm{Hom}_{\mathrm{rad}(\mathcal{A})}(\oplus A_i^{n_i} , \oplus  A_j^{n_j}) = \{\phi \in R | \phi_{ii} \in \mathrm{rad}(R_i) \mbox{ for all }i\},
\]
so that $R/\mathrm{rad}(R) = \prod R_i/\mathrm{rad}(R_i)$.  Therefore $\phi$ is invertible iff $\overline{\phi}$ is invertible in $R/\mathrm{rad}(R)$, iff $\overline{\phi _{ii}}$ is invertible in $R_i/\mathrm{rad}(R_i)$ for all $i$, iff $\phi _{ii}$ is invertible for all $i$.
\end{proof}
Using this lemma we can deduce the following.
\begin{cor}
\label{sec:2}
Assume $\mathcal{A}$ is Krull-Schmidt, and let $\mathcal{B} \subset \mathcal{A}$ be a full additive category closed under summands.  Let $A$ be an object of $\mathcal{A}$ which has no nonzero summand in $\mathcal{B}$, and let $B$ be an object of $\mathcal{B}$.  Let $\phi = \left(\begin{smallmatrix} a & b \\ c & d \end{smallmatrix}\right) \in \mathrm{Aut}_{\mathcal{A}}(A \oplus B)$.  Then $a$ and $d$ are invertible.
\end{cor}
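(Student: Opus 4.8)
The plan is to derive everything from the invertibility criterion of Lemma~\ref{invertibility-criterion}. First I would fix decompositions $A \cong \bigoplus_{i=1}^{r} A_i^{m_i}$ and $B \cong \bigoplus_{j=1}^{s} B_j^{n_j}$ into indecomposables, which exist and are essentially unique because $\mathcal{A}$ is Krull-Schmidt; we may assume the $A_i$ are pairwise nonisomorphic and likewise the $B_j$. The key observation is that no $A_i$ is isomorphic to any $B_j$: if $A_i \cong B_j$, then $B_j$ is a summand of $B \in \mathcal{B}$, so $B_j \in \mathcal{B}$ since $\mathcal{B}$ is closed under summands, hence $A_i \in \mathcal{B}$ since $\mathcal{B}$ is a full subcategory (so closed under isomorphism), contradicting the hypothesis that $A$ has no nonzero summand in $\mathcal{B}$. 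Consequently
\[
A \oplus B \cong \bigoplus_{i=1}^{r} A_i^{m_i} \oplus \bigoplus_{j=1}^{s} B_j^{n_j}
\]
exhibits $A \oplus B$ as a direct sum of isotypic blocks belonging to pairwise nonisomorphic indecomposables, which is exactly the situation in which Lemma~\ref{invertibility-criterion} applies.

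Next I would apply that lemma to the automorphism $\phi$ of $A \oplus B$ relative to this decomposition: since $\phi$ is invertible, every diagonal block of $\phi$ is invertible — that is, each induced endomorphism $A_i^{m_i} \to A_i^{m_i}$ and each induced endomorphism $B_j^{n_j} \to B_j^{n_j}$ is invertible. Now the induced morphism $A \to A$ of $\phi$ is precisely $a$, and under $A \cong \bigoplus_i A_i^{m_i}$ the diagonal blocks of $a$ are exactly those blocks $A_i^{m_i} \to A_i^{m_i}$ that we just showed to be invertible; symmetrically, the induced morphism $B \to B$ is $d$, whose diagonal blocks relative to $B \cong \bigoplus_j B_j^{n_j}$ are the invertible blocks $B_j^{n_j} \to B_j^{n_j}$. (The off-diagonal parts $b$, $c$ play no role because, by the first paragraph, there are no nonzero maps between an $A_i$-isotypic block and a $B_j$-isotypic block that could interfere — but even without that remark, it is only the diagonal blocks that the criterion sees.)

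Finally, I would invoke Lemma~\ref{invertibility-criterion} once more, this time for $a \in \mathrm{End}_{\mathcal{A}}(A)$ with respect to $A \cong \bigoplus_i A_i^{m_i}$ and for $d \in \mathrm{End}_{\mathcal{A}}(B)$ with respect to $B \cong \bigoplus_j B_j^{n_j}$: since all their diagonal blocks are invertible, $a$ and $d$ are invertible. I do not expect a genuine obstacle here; the whole argument is bookkeeping with block matrices, and the one substantive input is the observation in the first paragraph that the hypothesis ``$A$ has no nonzero summand in $\mathcal{B}$'' together with closure of $\mathcal{B}$ under summands forces $A$ and $B$ to share no indecomposable summand, which is precisely what makes the two applications of the criterion line up.
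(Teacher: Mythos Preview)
Your argument is correct and is exactly the intended one: the paper states the corollary immediately after Lemma~\ref{invertibility-criterion} with no proof beyond ``using this lemma we can deduce the following,'' and you have simply spelled out that deduction. The only quibble is the parenthetical suggesting there are no nonzero maps between $A_i$-blocks and $B_j$-blocks; such maps may well exist (they are just in the radical), but as you yourself note, this is irrelevant since the criterion only reads the diagonal blocks.
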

\subsection*{Localization of an Additive Category}
Let $\mathcal{A}$ be an additive category, and let $\mathcal{B} \subset \mathcal{A}$ be a full additive subcategory closed under summands.  Let $e:\mathcal{B} \rightarrow \mathcal{A}$ be the inclusion and $s:\mathcal{A} \rightarrow \mathcal{A}/\mathcal{B}$ the quotient of $\mathcal{A}$ by the ideal consisting of morphisms factoring through $\mathcal{B}$.  Let $w \subset \mathcal{A}$ be the multiplicative set consisting of maps which are a composition of the form
\[
\xymatrix{
A \ar[r]^-i	& A \oplus B \ar[r]^{\cong}	& A' \oplus B' \ar[r]^-p	& A'
}
\]
with $A$ and $A'$ in $\mathcal{A}$, $B$ and $B'$ in $\mathcal{B}$, $i$ and $p$ the canonical inclusion and projection, and the middle map an isomorphism.  It is easy to check that $w$ is closed under sums and compositions.  An additive functor out of $\mathcal{A}$ sends all morphisms in $w$ to isomorphisms iff it sends all objects in $\mathcal{B}$ to zero, so $s$ is initial among additive functors inverting $w$.  In this sense $\mathcal{A}/\mathcal{B}$ is simultaneously the quotient of $\mathcal{A}$ by $\mathcal{B}$ and the localization of $\mathcal{A}$ at $w$.

Consider the following conditions on a morphism $f:C \rightarrow C'$:
\begin{itemize}
\item[1.]
For any choice of isomorphisms $\alpha:C \xrightarrow{\sim} A \oplus B$, $\alpha ':C' \xrightarrow{\sim} A' \oplus B'$, with $B,B'$ in $\mathcal{B}$ and $A,A'$ not having any nonzero summands in $\mathcal{B}$, the composition
\begin{equation} \label{composition}\xymatrix{
A \ar[r]^-{i_A}	& A \oplus B \ar[r]^{\alpha 'f\alpha ^{-1}}	& A' \oplus B' \ar[r]^-{p_A'}	& A'
} \end{equation}
is an isomorphism.
\item[2.]
For some choice of isomorphisms $\alpha:C \xrightarrow{\sim} A \oplus B$, $\alpha ':C' \xrightarrow{\sim} A' \oplus B'$, with $B,B'$ in $\mathcal{B}$ and $A,A'$ not having any nonzero summands in $\mathcal{B}$, (\ref{composition}) is an isomorphism.
\item[3.]
$f$ is in $w$.
\end{itemize}
\begin{lem}
\label{weakequivalences}
Among the above conditions, $1 \Rightarrow 2 \Rightarrow 3$.  If $\mathcal{A}$ is Krull-Schmidt, then $3 \Rightarrow 1$.
\end{lem}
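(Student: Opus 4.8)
The plan is to establish $1 \Rightarrow 2 \Rightarrow 3$ with no Krull--Schmidt hypothesis and $3 \Rightarrow 1$ using it. The implication $1 \Rightarrow 2$ is immediate once one knows that $C$ and $C'$ admit decompositions of the required form (which they do in the Krull--Schmidt case, by collecting the indecomposable summands that lie in $\mathcal{B}$), since then ``for every choice'' in particular gives ``for some choice''. So the content is in the other two implications.

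For $2 \Rightarrow 3$, fix isomorphisms $\alpha\colon C \xrightarrow{\sim} A \oplus B$ and $\alpha'\colon C' \xrightarrow{\sim} A' \oplus B'$ witnessing condition $2$, and write $\alpha' f \alpha^{-1} = \left(\begin{smallmatrix} g & b \\ c & d\end{smallmatrix}\right)$ with $g\colon A \to A'$ invertible. Using $g^{-1}$ one factors $\left(\begin{smallmatrix} g & b \\ c & d\end{smallmatrix}\right) = \left(\begin{smallmatrix} 1 & 0 \\ cg^{-1} & 1\end{smallmatrix}\right)\left(\begin{smallmatrix} g & 0 \\ 0 & d'\end{smallmatrix}\right)\left(\begin{smallmatrix} 1 & g^{-1}b \\ 0 & 1\end{smallmatrix}\right)$ with $d' = d - cg^{-1}b$; absorbing the two elementary factors into $\alpha'$ and $\alpha$ produces isomorphisms $\beta\colon C' \xrightarrow{\sim} A' \oplus B'$ and $\gamma\colon C \xrightarrow{\sim} A \oplus B$ with $f = \beta^{-1}(g \oplus d')\gamma$. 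Here $d'\colon B \to B'$ need not be invertible, but the ``flip'' $\left(\begin{smallmatrix} d' & 1_{B'} \\ 1_B & 0\end{smallmatrix}\right)\colon B \oplus B' \to B' \oplus B$ is always an isomorphism (with inverse $\left(\begin{smallmatrix} 0 & 1_B \\ 1_{B'} & -d'\end{smallmatrix}\right)$), so $g \oplus \left(\begin{smallmatrix} d' & 1 \\ 1 & 0\end{smallmatrix}\right)\colon A \oplus B \oplus B' \xrightarrow{\sim} A' \oplus B' \oplus B$ is an isomorphism. Then $\psi = (\beta^{-1}\oplus 1_B)\bigl(g \oplus \left(\begin{smallmatrix} d' & 1 \\ 1 & 0\end{smallmatrix}\right)\bigr)(\gamma \oplus 1_{B'})\colon C \oplus B' \xrightarrow{\sim} C' \oplus B$ is an isomorphism, and a short computation gives $p_{C'}\,\psi\, i_C = f$; since $B, B' \in \mathcal{B}$, this exhibits $f$ as an element of $w$.

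For $3 \Rightarrow 1$ assume $\mathcal{A}$ is Krull--Schmidt and $f \in w$, say $f = p_{C'}\,\phi\, i_C$ for an isomorphism $\phi\colon C \oplus Q \xrightarrow{\sim} C' \oplus Q'$ with $Q, Q' \in \mathcal{B}$. Given arbitrary decompositions $\alpha\colon C \xrightarrow{\sim} A \oplus B$, $\alpha'\colon C' \xrightarrow{\sim} A' \oplus B'$ as in condition $1$, transport $\phi$ through $\alpha$ and $\alpha'$ to an isomorphism $\widetilde{\phi}\colon A \oplus \overline{B} \xrightarrow{\sim} A' \oplus \overline{B}'$, where $\overline{B} = B \oplus Q$ and $\overline{B}' = B' \oplus Q'$ lie in $\mathcal{B}$; unwinding the definitions identifies the composition (\ref{composition}) with the corner $p_{A'}\,\widetilde{\phi}\, i_A$. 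It remains to see this corner is invertible. Comparing indecomposable summands on the two sides of $A \oplus \overline{B} \cong A' \oplus \overline{B}'$ and separating those that do from those that do not lie in $\mathcal{B}$ --- using that $A$, $A'$ have no nonzero summand in $\mathcal{B}$ and that $\mathcal{B}$ is closed under summands --- the Krull--Schmidt uniqueness gives $A \cong A'$ and $\overline{B} \cong \overline{B}'$. Choosing isomorphisms $v\colon A' \xrightarrow{\sim} A$ and $u\colon \overline{B}' \xrightarrow{\sim} \overline{B}$, the automorphism $(v \oplus u)\widetilde{\phi} \in \mathrm{Aut}_{\mathcal{A}}(A \oplus \overline{B})$ has $(A,A)$-corner $v \circ (p_{A'}\,\widetilde{\phi}\, i_A)$, which is invertible by Corollary \ref{sec:2}; since $v$ is invertible, so is $p_{A'}\,\widetilde{\phi}\, i_A$, as needed.

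I expect the delicate point to be the bookkeeping in $3 \Rightarrow 1$: first, checking that the corner extracted from the transported isomorphism $\widetilde{\phi}$ is exactly the composition (\ref{composition}) (a matter of chasing inclusions and projections through $\alpha$ and $\alpha'$); and second, the reduction to an honest automorphism so that Corollary \ref{sec:2} --- which is stated for $\mathrm{Aut}_{\mathcal{A}}$ and whose proof already rests on Lemma \ref{invertibility-criterion} --- applies. This reduction, via $A \cong A'$ and $\overline{B} \cong \overline{B}'$, is precisely where the Krull--Schmidt hypothesis is used.
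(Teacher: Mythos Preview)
Your proof is correct and follows essentially the same strategy as the paper. For $2 \Rightarrow 3$ you both stabilize by the ``other'' $\mathcal{B}$-summand to produce an honest isomorphism: the paper writes down a single $3 \times 3$ matrix $\left(\begin{smallmatrix}\phi & a & 0 \\ 0 & 1 & 0 \\ b & c & 1\end{smallmatrix}\right)$ on $A \oplus B \oplus B'$ and checks invertibility directly, whereas you LDU-factor first and then stabilize via the flip $\left(\begin{smallmatrix}d' & 1 \\ 1 & 0\end{smallmatrix}\right)$; the paper's version is a little more economical but the idea is the same. For $3 \Rightarrow 1$ you are in fact more careful than the paper: the paper simply invokes Corollary~\ref{sec:2} on the transported isomorphism $A \oplus B \oplus B_1 \xrightarrow{\cong} A' \oplus B' \oplus B_1'$, even though that corollary is literally stated only for automorphisms of a single object; your extra step of using Krull--Schmidt uniqueness to identify $A \cong A'$ and $\overline{B} \cong \overline{B}'$, and then composing with these identifications, is exactly what is needed to make that citation honest.
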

\begin{proof}\ 
\begin{itemize}
\item[$1 \Rightarrow 2$] Trivial.
\item[$2 \Rightarrow 3$] Say $\alpha 'f\alpha ^{-1}:A \oplus B \rightarrow A' \oplus B'$ is given by the matrix $\left(\begin{smallmatrix} \phi & a \\ b & c \end{smallmatrix}\right)$.  Then
\[ \xymatrix{
A \oplus B \ar[r]^{\alpha 'f\alpha ^{-1}} \ar[d]_{i_{A \oplus B}}								& A' \oplus B' \\
A \oplus B \oplus B' \ar[r]_{\left(\begin{smallmatrix}\phi & a & 0 \\ 0 & 1 & 0 \\ b & c & 1 \end{smallmatrix}\right)}	& A' \oplus B \oplus B' \ar[u]^{p_{A' \oplus B'}}
} \]
commutes, and the lower horizontal map is an isomorphism with inverse $\left(\begin{smallmatrix}\phi ^{-1} & -\phi ^{-1} a & 0 \\ 0 & 1 & 0 \\ -b \phi ^{-1} & \, b \phi ^{-1} a - c\, & 1 \end{smallmatrix}\right)$.  So $\alpha 'f\alpha ^{-1}$ is in $w$, and therefore $f$ is in $w$.
\item[$3 \Rightarrow 1$]
Assume $\mathcal{A}$ is Krull-Schmidt and $f$ is in $w$.  Given decompositions $\alpha:C \xrightarrow{\sim} A \oplus B$, $\alpha ':C' \xrightarrow{\sim} A' \oplus B'$, there is by hypothesis a commuting diagram as below.
\[ \xymatrix{
A \ar[r] \ar[d]_{i_A}						& A' \\
A \oplus B \ar[r]^{\alpha 'f\alpha ^{-1}} \ar[d]_{i_{A \oplus B}}	& A' \oplus B' \ar[u]^{p_{A'}} \\
A \oplus B \oplus B_1 \ar[r]^{\cong}				& A' \oplus B' \oplus B_1' \ar[u]^{p_{A' \oplus B'}}
} \]
By Corollary \ref{sec:2}, the top horizontal map is an isomorphism.
\end{itemize}
\end{proof}
\begin{rem}
If $\mathcal{A}$ is Krull-Schmidt, it follows from the above characterization of maps in $w$ that $w$ satisfies the 2-out-of-3 property: if two out of $f$, $g$, and $f \circ g$ are in $w$ then all three are.  In this case $w$ is exactly the class of maps in $\mathcal{A}$ which become invertible in $\mathcal{A}/\mathcal{B}$.
\end{rem}
\subsection*{The Exact Sequence}
Now we proceed to our destination, Theorem \ref{K1}.  We adopt the notation of the previous section, with the added assumption that $\mathcal{A}$ is Krull-Schmidt.  In this case $\mathcal{B}$ and $\mathcal{A}/\mathcal{B}$ are automatically Krull-Schmidt.

\begin{lem}
\label{liftunits}
Suppose $A$ and $A'$ are objects of $\mathcal{A}$ with no nonzero summands in $\mathcal{B}$.  Suppose $\psi \in \mathcal{A}(A,A')$, and assume the image $\overline{\psi}$ of $\psi$ in $\mathcal{A}/\mathcal{B}$ is an isomorphism.  Then $\psi$ is itself an isomorphism.
\end{lem}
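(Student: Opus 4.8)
The plan is to deduce the statement from the invertibility criterion of Corollary~\ref{sec:2}, by manufacturing from the composite $\psi'\psi$ an automorphism of $A\oplus B$ whose upper-left corner is exactly $\psi'\psi$.

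First I would lift an inverse. Since $\overline{\psi}$ is invertible in $\mathcal{A}/\mathcal{B}$, choose $\psi'\in\mathcal{A}(A',A)$ representing $(\overline{\psi})^{-1}$. Because $\mathcal{B}$ is a full additive subcategory, hence closed under finite direct sums, a morphism of $\mathcal{A}$ lies in the ideal defining $\mathcal{A}/\mathcal{B}$ exactly when it factors through a single object of $\mathcal{B}$; so $\psi'\psi-1_A=\beta\alpha$ for some $\alpha\colon A\to B$, $\beta\colon B\to A$ with $B\in\mathcal{B}$, and similarly $\psi\psi'-1_{A'}=\delta\gamma$ with $\gamma\colon A'\to B'$, $\delta\colon B'\to A'$, $B'\in\mathcal{B}$.

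Next I would record the matrix identity in $\mathrm{End}_{\mathcal{A}}(A\oplus B)$
\[
\begin{pmatrix}\psi'\psi & \beta\\ \alpha & 1_B\end{pmatrix}=\begin{pmatrix}1_A & \beta\\ 0 & 1_B\end{pmatrix}\begin{pmatrix}1_A & 0\\ \alpha & 1_B\end{pmatrix},
\]
whose two right-hand factors are unipotent, hence invertible; thus the left-hand matrix is an automorphism of $A\oplus B$. As $A$ has no nonzero summand in $\mathcal{B}$ by hypothesis, Corollary~\ref{sec:2} applies and forces its upper-left entry $\psi'\psi$ to be invertible in $\mathrm{End}_{\mathcal{A}}(A)$. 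Running the identical argument with $A'$, $B'$, $\gamma$, $\delta$ in place of $A$, $B$, $\alpha$, $\beta$ shows $\psi\psi'$ is invertible in $\mathrm{End}_{\mathcal{A}}(A')$.

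Finally, $(\psi'\psi)^{-1}\psi'$ is a left inverse and $\psi'(\psi\psi')^{-1}$ a right inverse of $\psi$, so $\psi$ is an isomorphism. The only point that needs any care is the translation in the first step between ``invertible in $\mathcal{A}/\mathcal{B}$'' and ``differs from the identity by a morphism factoring through one object of $\mathcal{B}$''; after that the argument is bookkeeping, with Corollary~\ref{sec:2} supplying the substance.
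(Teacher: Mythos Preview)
Your proof is correct and takes a genuinely different route from the paper's. The paper argues directly that the kernel of $\mathrm{End}_{\mathcal{A}}A \to \mathrm{End}_{\mathcal{A}/\mathcal{B}}A$ lies in $\mathrm{rad}(\mathrm{End}_{\mathcal{A}}A)$ (by checking on indecomposable summands, using that their endomorphism rings are local), and then concludes that $\psi'\psi$ and $\psi\psi'$, each differing from an identity by a radical element, are units. You instead package the factorization $\psi'\psi-1_A=\beta\alpha$ into the elementary matrix identity
\[
\begin{pmatrix}\psi'\psi & \beta\\ \alpha & 1_B\end{pmatrix}=\begin{pmatrix}1_A & \beta\\ 0 & 1_B\end{pmatrix}\begin{pmatrix}1_A & 0\\ \alpha & 1_B\end{pmatrix},
\]
so that the left side is visibly an automorphism of $A\oplus B$, and then invoke Corollary~\ref{sec:2} to read off the invertibility of the $(1,1)$-block. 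This is slicker and reuses a result already in hand; the paper's approach has the compensating advantage of isolating the conceptually useful fact that the ideal of maps through $\mathcal{B}$ restricts to the radical on objects with no $\mathcal{B}$-summand. Both arguments ultimately rest on the same Krull--Schmidt input encoded in Lemma~\ref{invertibility-criterion}, so the difference is one of packaging rather than depth.
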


\begin{proof}
First we show that \[
\mathrm{ker}(\mathrm{End}_{\mathcal{A}}A \rightarrow \mathrm{End}_{\mathcal{A}/\mathcal{B}}A) \subset \mathrm{rad}(\mathrm{End}_{\mathcal{A}}A).
\]
That is, we show that given morphisms $\xymatrix{A \ar[r]^f & B \ar[r]^g & A}$ with $B$ in $\mathcal{B}$, $gf \in \mathrm{rad}(\mathrm{End}_{\mathcal{A}}A)$.  It suffices, using the characterization of $\mathrm{rad}(\mathrm{End}_{\mathcal{A}}A)$ in the proof of Lemma \ref{invertibility-criterion}, to show that for each indecomposable summand $A_i \subset A$, the induced map $\xymatrix{A_i \ar[r]^{f_i} & B \ar[r]^{g_i} & A_i}$ is in $\mathrm{rad}(\mathrm{End}_{\mathcal{A}}A_i)$.  But this follows from the fact that $\mathrm{End}_{\mathcal{A}}A_i$ is local and $A_i$ is not a summand of $B$.

Since $\overline{\psi}$ is invertible, there is $\phi \in \mathcal{A}(A',A)$ such that $\phi \psi - \mathrm{id}_A \in \mathrm{ker}(\mathrm{End}_{\mathcal{A}}A \rightarrow \mathrm{End}_{\mathcal{A}/\mathcal{B}}A)$ and $\psi \phi - \mathrm{id}_{A'} \in \mathrm{ker}(\mathrm{End}_{\mathcal{A}}A' \rightarrow \mathrm{End}_{\mathcal{A}/\mathcal{B}}A')$.  Therefore $\phi \psi - \mathrm{id}_A \in \mathrm{rad}(\mathrm{End}_{\mathcal{A}}A)$ and (by the same argument) $\psi \phi - \mathrm{id}_{A'} \in \mathrm{rad}(\mathrm{End}_{\mathcal{A}}A')$, so $\phi \psi$ and $\psi \phi$ are both invertible.  It follows that $\psi$ is invertible.
\end{proof}

\begin{lem}
\label{elimination}
Suppose $A$ is an object of $\mathcal{A}$ with no nonzero summand in $\mathcal{B}$, $B$ is an object of $\mathcal{B}$, and $\alpha = \left( \begin{smallmatrix}\phi & a \\ b & c \end{smallmatrix} \right) \in \mathrm{Aut}_{\mathcal{A}}(A \oplus B)$.  Then $[\alpha ]-[\phi ] \in \mathrm{im}\,K_1(e).$
\end{lem}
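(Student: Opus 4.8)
The plan is to reduce $\alpha$ by elementary row and column operations to a block-diagonal automorphism, tracking the effect on the class in $K_1(\mathcal{A})$ via Remark \ref{reduction}. First I would observe that by Corollary \ref{sec:2}, the corner entry $c \in \mathrm{End}_{\mathcal{A}}(B)$ is an automorphism of $B$ (and $\phi$ is an automorphism of $A$). This is the crucial input that makes the elimination possible: because $c$ is invertible, I can use it as a pivot. Multiplying $\alpha$ on the left by $\left(\begin{smallmatrix} 1_A & -ac^{-1} \\ 0 & 1_B \end{smallmatrix}\right)$ clears the $a$ entry, and then multiplying on the right by $\left(\begin{smallmatrix} 1_A & 0 \\ -c^{-1}b & 1_B \end{smallmatrix}\right)$ clears the $b$ entry; the result is $\left(\begin{smallmatrix} \phi - ac^{-1}b & 0 \\ 0 & c \end{smallmatrix}\right)$. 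By Remark \ref{reduction} (left/right multiplication by the elementary matrices of Lemma \ref{invertibility-criterion}'s preceding lemma, which have trivial class), we get $[\alpha] = \left[\left(\begin{smallmatrix} \phi - ac^{-1}b & 0 \\ 0 & c \end{smallmatrix}\right)\right]$ in $K_1(\mathcal{A})$.

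Next I would use the conflation $(A, \phi - ac^{-1}b) \rightarrowtail (A \oplus B, \mathrm{diag}) \twoheadrightarrow (B, c)$ in $\mathrm{Aut}(\mathcal{A})$ — which is a conflation because in the split exact structure the direct-sum sequence is a conflation and $\mathrm{diag}$ is a morphism of automorphism-objects — to conclude $[\alpha] = [A, \phi - ac^{-1}b] + [B, c]$. The term $[B,c]$ lies in $\mathrm{im}\,K_1(e)$ since $(B,c)$ is an object of $\mathrm{Aut}(\mathcal{B})$. So it remains to show that $[A, \phi - ac^{-1}b] - [A, \phi] \in \mathrm{im}\,K_1(e)$ as well; equivalently (writing $\psi = \phi - ac^{-1}b$, so that $\psi\phi^{-1} \in \mathrm{Aut}_{\mathcal{A}}(A)$ by Lemma \ref{invertibility-criterion} since both $\psi$ and $\phi$ are automorphisms) that $[A, \psi\phi^{-1}] \in \mathrm{im}\,K_1(e)$, using the relation $[\psi] = [\psi\phi^{-1}] + [\phi]$ in $K_1(\mathcal{A})$.

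The point is that $\psi\phi^{-1} = \mathrm{id}_A - ac^{-1}b\phi^{-1}$ differs from the identity by a morphism $A \to B \to A$ factoring through $B$, so its image in $\mathcal{A}/\mathcal{B}$ is $\mathrm{id}_A$; by Lemma \ref{liftunits} this is consistent, but more is needed — I want its $K_1$-class to come from $\mathcal{B}$. Here I would factor $\psi\phi^{-1}$ through an object of $\mathcal{B}$ directly: consider the automorphism $\left(\begin{smallmatrix} 1_A & -ac^{-1} \\ 0 & 1_B\end{smallmatrix}\right)\left(\begin{smallmatrix} 1_A & 0 \\ b\phi^{-1} & 1_B\end{smallmatrix}\right) = \left(\begin{smallmatrix} 1_A - ac^{-1}b\phi^{-1} & -ac^{-1} \\ b\phi^{-1} & 1_B\end{smallmatrix}\right)$ of $A \oplus B$; its class is $0$ by Remark \ref{reduction}, and reducing it the other way (clearing the $(1,2)$ entry using the invertible $(2,2)$ corner, then the $(2,1)$ entry) expresses $0 = [A, \psi\phi^{-1}] + [B, c']$ for some automorphism $c'$ of $B$, whence $[A,\psi\phi^{-1}] = -[B,c'] \in \mathrm{im}\,K_1(e)$. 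Combining, $[\alpha] - [\phi] = [A,\psi\phi^{-1}] + [B,c] \in \mathrm{im}\,K_1(e)$.

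The main obstacle I anticipate is the bookkeeping in this last step — making sure the two different elementary reductions of the same matrix genuinely land on the same $K_1$-class, and that the resulting $\mathcal{B}$-automorphism is honestly an automorphism of an object of $\mathcal{B}$ (which is where Corollary \ref{sec:2} and Lemma \ref{invertibility-criterion} are used to guarantee invertibility of all the relevant corner blocks). The elementary-matrix manipulations themselves are routine given Remark \ref{reduction}; the care lies in choosing the reductions so that every ``error term'' produced is visibly in the image of $K_1(e)$.
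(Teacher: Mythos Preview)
Your overall strategy is sound, but you have taken the long way around and there is a glitch in the final reduction. In your last step you want to reduce
\[
M=\begin{pmatrix}\psi\phi^{-1} & -ac^{-1}\\ b\phi^{-1} & 1_B\end{pmatrix}
\]
to block-diagonal form with $\psi\phi^{-1}$ in the upper left. You propose to clear the $(1,2)$ entry ``using the invertible $(2,2)$ corner'', but any elementary operation that uses the $(2,2)$ block $1_B$ to kill an off-diagonal entry will alter the $(1,1)$ block: for instance, adding $ac^{-1}$ times row $2$ to row $1$ kills the $(1,2)$ entry but turns the $(1,1)$ entry into $\psi\phi^{-1}+ac^{-1}b\phi^{-1}=1_A$, and you end up with $0=[1_A]+[1_B]$, which is true but useless. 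To land on $\mathrm{diag}(\psi\phi^{-1},c')$ you must pivot on the $(1,1)$ block $\psi\phi^{-1}$ itself (it is invertible, as you noted); the Schur complement is then $c'=1_B+b\psi^{-1}ac^{-1}$, automatically invertible because $M$ and its $(1,1)$ block are. With that correction your argument goes through.

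The paper's proof avoids all of this extra work by pivoting on the \emph{other} invertible corner from the outset. Since Corollary~\ref{sec:2} gives that $\phi$ (not just $c$) is invertible, one can clear the $(2,1)$ entry and then the $(1,2)$ entry using $\phi$ as pivot, obtaining immediately
\[
[\alpha]=\left[\begin{pmatrix}\phi & 0\\ 0 & c-b\phi^{-1}a\end{pmatrix}\right]=[A,\phi]+[B,\,c-b\phi^{-1}a],
\]
and the second summand is visibly in $\mathrm{im}\,K_1(e)$. Your choice of $c$ as pivot forces the ``error'' into the $A$-slot, where it is not obviously in $\mathrm{im}\,K_1(e)$, and you then have to run essentially the same elimination a second time to push it back to $B$. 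Pivoting on $\phi$ puts the error in the $B$-slot in one step.
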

\begin{proof}
Note that $\phi$ is an automorphism by Corollary \ref{sec:2}.  Using Remark \ref{reduction}, compute:
\begin{align*}
\left[ A \oplus B, \left(\begin{smallmatrix}\phi & a \\ b & c \end{smallmatrix}\right)\right] &= \left[A \oplus B, \left(\begin{smallmatrix}\phi & a \\ 0 & c-b\phi ^{-1}a\end{smallmatrix}\right)\right] \\
&= \left[A \oplus B, \left(\begin{smallmatrix}\phi & 0 \\ 0 & c-b\phi ^{-1}a\end{smallmatrix}\right)\right] \\
&= [A,\phi ] + [B, c-b\phi ^{-1}a] \\
&\equiv [A,\phi ]\ (\mathrm{mod\ im}\,K_1(e))
\end{align*}
\end{proof}

\begin{lem}
\label{equality}
Suppose $(A,\phi )$ and $(A,\psi )$ are objects of $\mathrm{Aut}(\mathcal{A})$ such that $\overline{\phi} = \overline{\psi}$ in $\mathcal{A}/\mathcal{B}$.  Then $[\phi] - [\psi] \in \mathrm{im}\,K_1(e)$.
\end{lem}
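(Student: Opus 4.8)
The plan is to reduce to the case $\psi=\mathrm{id}_A$ and then to kill the leftover class by the standard stabilization trick that makes $1+vu$ and $1+uv$ interchangeable in $K_1$. Recall from the presentation $K_0(\mathrm{Aut}(\mathcal{A}))\to K_1(\mathcal{A})$ that $[\alpha\beta]=[\alpha]+[\beta]$ in $K_1(\mathcal{A})$ whenever $\alpha,\beta$ are automorphisms of a fixed object; hence $[\phi]-[\psi]=[A,\phi\psi^{-1}]$. Put $\theta=\phi\psi^{-1}\in\mathrm{Aut}_{\mathcal{A}}A$. Since $\overline{\theta}=\mathrm{id}_A$ in $\mathcal{A}/\mathcal{B}$, the morphism $\theta-\mathrm{id}_A$ factors through an object of $\mathcal{B}$, and because $\mathcal{B}$ is additive we may take that object to be a single $B_0\in\mathcal{B}$: write $\theta=\mathrm{id}_A+vu$ with $u\colon A\to B_0$ and $v\colon B_0\to A$. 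It now suffices to show $[A,\theta]\in\mathrm{im}\,K_1(e)$.

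For that I would work in $\mathrm{Aut}_{\mathcal{A}}(A\oplus B_0)$ with the automorphism
\[
\Theta \;=\; \left(\begin{smallmatrix} 1_A & v \\ 0 & 1_{B_0}\end{smallmatrix}\right)\left(\begin{smallmatrix} 1_A & 0 \\ u & 1_{B_0}\end{smallmatrix}\right)\;=\;\left(\begin{smallmatrix} 1_A+vu & v \\ u & 1_{B_0}\end{smallmatrix}\right)\;=\;\left(\begin{smallmatrix} \theta & v \\ u & 1_{B_0}\end{smallmatrix}\right).
\]
Both factors are strictly triangular, so each has trivial $K_1$-class by the lemma preceding Remark \ref{reduction}, and therefore $[\Theta]=0$. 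On the other hand, $\theta$ is invertible, so left-multiplying $\Theta$ by $\left(\begin{smallmatrix}1_A & 0\\-u\theta^{-1} & 1_{B_0}\end{smallmatrix}\right)$ and then right-multiplying by $\left(\begin{smallmatrix}1_A & -\theta^{-1}v\\0 & 1_{B_0}\end{smallmatrix}\right)$ turns it into the block-diagonal automorphism $\left(\begin{smallmatrix}\theta & 0\\0 & 1_{B_0}-u\theta^{-1}v\end{smallmatrix}\right)$; in particular $1_{B_0}-u\theta^{-1}v$ is an automorphism of $B_0$, being a diagonal block of an invertible morphism. These are elementary operations, so by Remark \ref{reduction} and additivity of the class $[-]$ in $K_1(\mathcal{A})$ along direct sums,
\[
0 \;=\; [\Theta] \;=\; [A,\theta] + [B_0,\, 1_{B_0}-u\theta^{-1}v].
\]
Hence $[A,\theta]=-[B_0,\, 1_{B_0}-u\theta^{-1}v]$ lies in $\mathrm{im}\,K_1(e)$, and so does $[\phi]-[\psi]$. (One could equally well invoke Lemma \ref{elimination}: first reduce via Lemma \ref{elimination} and Corollary \ref{sec:2} to the case where $A$ has no nonzero summand in $\mathcal{B}$, then apply Lemma \ref{elimination} to $\Theta$ to get $[\Theta]-[A,\theta]\in\mathrm{im}\,K_1(e)$ at once.)

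I do not expect a genuine obstacle here: everything with real content has been pushed into the earlier lemmas. The only points needing attention are reading off $[\phi]-[\psi]=[A,\phi\psi^{-1}]$ from the presentation of $K_1(\mathcal{A})$, rephrasing ``$\theta-1_A$ factors through $\mathcal{B}$'' through a single object $B_0$, and keeping straight which matrices in the reduction of $\Theta$ are genuinely elementary so that Remark \ref{reduction} applies; the invertibility of the $(2,2)$-block of the diagonalized $\Theta$ is then immediate.
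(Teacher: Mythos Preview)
Your argument is correct. Both your proof and the paper's hinge on the same idea---factor the relevant map through an object of $\mathcal{B}$, stabilize to $A\oplus B_0$, and use elementary row/column operations to peel off an automorphism of $B_0$---but the packaging differs. The paper factors $\phi-\psi=gf$, starts from $\left(\begin{smallmatrix}\phi & 0\\ 0 & 1\end{smallmatrix}\right)$, row-reduces to $\left(\begin{smallmatrix}\psi & -g\\ f & 1\end{smallmatrix}\right)$, and then invokes Lemma~\ref{elimination} to finish. You instead first reduce to $\psi=1_A$ via $[\phi]-[\psi]=[\phi\psi^{-1}]$ and exploit the factorization $\Theta=\left(\begin{smallmatrix}1 & v\\ 0 & 1\end{smallmatrix}\right)\left(\begin{smallmatrix}1 & 0\\ u & 1\end{smallmatrix}\right)$ to get $[\Theta]=0$ directly, so the argument stands on its own without calling Lemma~\ref{elimination}. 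Neither route is meaningfully harder; yours is the classical Whitehead-lemma maneuver and is arguably a touch cleaner, while the paper's version keeps $\phi$ and $\psi$ separate throughout and leans on the already-established Lemma~\ref{elimination}.
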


\begin{proof}
Since $\overline{\phi} = \overline{\psi}$, $\phi - \psi$ factors as a composition $\xymatrix{A \ar[r]^f & B \ar[r]^g & A}$ through some object $B$ of $\mathcal{B}$.  Using Remark \ref{reduction}, compute:
\begin{align*}
[A,\phi] &= \left[A \oplus B, \left(\begin{smallmatrix}\phi & 0 \\ 0 & 1_B\end{smallmatrix}\right)\right] \\
&= \left[A \oplus B, \left(\begin{smallmatrix}\phi & 0 \\ f & 1_B\end{smallmatrix}\right)\right] \\
&= \left[A \oplus B, \left(\begin{smallmatrix}\phi - gf & -g \\ f & 1_B\end{smallmatrix}\right)\right] \\
&\equiv [A,\psi]\ (\mathrm{mod}\ \mathrm{im}\,K_1(e)),\mbox{ by Lemma \ref{elimination}}
\end{align*}
Therefore $[A,\phi] \equiv [A,\psi]\ (\mathrm{mod}\ \mathrm{im}\,K_1(e))$.
\end{proof}

\begin{lem}
\label{isomorphism}
Suppose $(C,\alpha )$ and $(C',\alpha ')$ are objects of $\mathrm{Aut}(\mathcal{A})$ such that $(C,\overline{\alpha }) \cong (C',\overline{\alpha '})$ in $\mathrm{Aut}(\mathcal{A}/\mathcal{B})$.  Then $[\alpha]-[\alpha '] \in \mathrm{im}\,K_1(e)$.
\end{lem}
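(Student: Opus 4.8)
The plan is to strip the $\mathcal{B}$-parts off of $C$ and $C'$, reducing to a comparison of automorphisms of objects having no nonzero summand in $\mathcal{B}$, and then to finish with Lemmas \ref{liftunits} and \ref{equality}. The one preliminary fact I will use repeatedly is that conjugation by an isomorphism—even between two different objects—leaves the $K_1$-class unchanged: if $\gamma\colon X\xrightarrow{\sim}X'$ is an isomorphism in $\mathcal{A}$ and $\phi\in\mathrm{Aut}_{\mathcal{A}}X$, then $\gamma$ is an isomorphism $(X,\phi)\xrightarrow{\sim}(X',\gamma\phi\gamma^{-1})$ in $\mathrm{Aut}(\mathcal{A})$, so these two objects have the same class in $K_0(\mathrm{Aut}(\mathcal{A}))$ and hence $[X,\phi]=[X',\gamma\phi\gamma^{-1}]$ in $K_1(\mathcal{A})$. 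Using that $\mathcal{A}$ is Krull-Schmidt and $\mathcal{B}$ is closed under summands, I would decompose $C\cong A\oplus B$ with $B$ in $\mathcal{B}$ and $A$ having no nonzero summand in $\mathcal{B}$, and similarly $C'\cong A'\oplus B'$; by the preliminary fact we may assume outright that $C=A\oplus B$ and $C'=A'\oplus B'$, with $\alpha=\left(\begin{smallmatrix}\phi&a\\b&c\end{smallmatrix}\right)$ and $\alpha'=\left(\begin{smallmatrix}\phi'&a'\\b'&c'\end{smallmatrix}\right)$. Corollary \ref{sec:2} shows $\phi\in\mathrm{Aut}_{\mathcal{A}}A$ and $\phi'\in\mathrm{Aut}_{\mathcal{A}}A'$, and Lemma \ref{elimination} gives $[\alpha]\equiv[\phi]$ and $[\alpha']\equiv[\phi']$ modulo $\mathrm{im}\,K_1(e)$.

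Next I would unwind the hypothesis. Since $\mathrm{id}_B$ and $\mathrm{id}_{B'}$ factor through $\mathcal{B}$, the objects $B$ and $B'$ become zero in $\mathcal{A}/\mathcal{B}$, so the assumed isomorphism $(C,\overline\alpha)\cong(C',\overline{\alpha'})$ in $\mathrm{Aut}(\mathcal{A}/\mathcal{B})$ is nothing but an isomorphism $(A,\overline\phi)\cong(A',\overline{\phi'})$, given by some isomorphism $\overline\psi\colon\overline A\xrightarrow{\sim}\overline{A'}$ with $\overline{\phi'}\,\overline\psi=\overline\psi\,\overline\phi$. Because $\mathcal{A}/\mathcal{B}$ is a quotient of $\mathcal{A}$ by an ideal, $\overline\psi$ lifts to a morphism $\psi\colon A\to A'$ in $\mathcal{A}$, and Lemma \ref{liftunits} forces $\psi$ to be an isomorphism. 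Set $\phi''=\psi^{-1}\phi'\psi\in\mathrm{Aut}_{\mathcal{A}}A$. On one hand its image in $\mathcal{A}/\mathcal{B}$ is $\overline\psi^{-1}\,\overline{\phi'}\,\overline\psi=\overline\phi$, so Lemma \ref{equality} gives $[\phi'']\equiv[\phi]$ modulo $\mathrm{im}\,K_1(e)$. On the other hand, applying the preliminary fact with $\gamma=\psi^{-1}$ we get $[\phi'']=[A,\psi^{-1}\phi'\psi]=[A',\phi']=[\phi']$ in $K_1(\mathcal{A})$. Stringing these congruences together gives $[\alpha]\equiv[\phi]\equiv[\phi'']=[\phi']\equiv[\alpha']$ modulo $\mathrm{im}\,K_1(e)$, which is exactly the assertion.

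I do not anticipate a genuine obstacle here; the proof is an assembly of the lemmas already in hand. The only point that might trip one up is the preliminary fact used in the first and last steps—that conjugating an automorphism by an isomorphism between (possibly) different objects does not change its $K_1$-class—but this is immediate from the observation that isomorphic objects of the exact category $\mathrm{Aut}(\mathcal{A})$ have the same $K_0$-class. Everything else is bookkeeping: checking that the block decompositions exist (this is where Krull-Schmidt and closure of $\mathcal{B}$ under summands enter) and that the hypothesis really does restrict to an isomorphism of the ``$\mathcal{B}$-free'' parts.
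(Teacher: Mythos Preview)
Your proof is correct and follows essentially the same route as the paper's: decompose $C=A\oplus B$ and $C'=A'\oplus B'$, use Lemma~\ref{elimination} to reduce to $[\phi]$ and $[\phi']$, extract from the hypothesis an isomorphism $\psi\colon A\to A'$ via Lemma~\ref{liftunits}, and then combine the conjugation identity $[A,\psi^{-1}\phi'\psi]=[A',\phi']$ with Lemma~\ref{equality}. The only cosmetic difference is that the paper lifts the full isomorphism $\beta\colon C\to C'$ and reads off $\psi$ as its $(A,A')$-block, whereas you first pass to $\mathcal{A}/\mathcal{B}$ (where $B,B'$ vanish) and then lift $\overline\psi$ directly; the effect is the same.
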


\begin{proof}
We may assume there are decompositions
\begin{align*}(C,\alpha) = \left( A \oplus B,\left( \begin{smallmatrix} \phi \phantom{'} & a \\ b \phantom{'} & c \end{smallmatrix} \right) \right) & & (C',\alpha ') = \left( A' \oplus B',\left( \begin{smallmatrix}\phi '& a' \\ b' & c' \end{smallmatrix} \right) \right)\end{align*}
such that $A$ and $A'$ have no nonzero summand in $\mathcal{B}$ and $B$ and $B'$ are in $\mathcal{B}$.  By assumption there is $\beta :A \oplus B \rightarrow A' \oplus B'$ such that $\overline{\beta}$ is an isomorphism in $\mathcal{A}/\mathcal{B}$ and $\overline{\beta} \overline{\alpha} = \overline{\alpha '} \overline{\beta}$.  Say $\beta$ is given by a matrix of the form $\left( \begin{smallmatrix}\psi & * \\ * & * \end{smallmatrix} \right)$.  By Lemma \ref{liftunits}, $\psi$ must be an isomorphism.  Now, $\overline{\beta} \overline{\alpha} = \overline{\alpha '} \overline{\beta} \Rightarrow \overline{\psi}\overline{\phi}=\overline{\phi '}\overline{\psi} \Rightarrow \overline{\phi}=\overline{\psi ^{-1} \phi ' \psi}$ so
\begin{align*}
[C,\alpha] &\equiv [A, \phi ]\ (\mathrm{mod}\ \mathrm{im}\,K_1(e)),\mbox{ by Lemma \ref{elimination}}\\
&\equiv [A, \psi ^{-1}\phi ' \psi ]\ (\mathrm{mod}\ \mathrm{im}\,K_1(e)),\mbox{ by Lemma \ref{equality}}\\
&= [A', \phi '] \\
&\equiv [C',\alpha ']\ (\mathrm{mod}\ \mathrm{im}\,K_1(e)),\mbox{ by Lemma \ref{elimination}}
\end{align*}
\end{proof}
\begin{thm}
\label{K1}
Suppose $\mathcal{A}$ is Krull-Schmidt.  Then the sequence
\[ \xymatrix{
K_1(\mathcal{B}) \ar[r]^{K_1(e)}	& K_1(\mathcal{A}) \ar[r]^{K_1(s)}	& K_1(\mathcal{A}/\mathcal{B}) \ar `r[d] `[l] `[lld]_{0} `[dl] [dl] \\
				& K_0(\mathcal{B}) \ar[r]^{K_0(e)}	& K_0(\mathcal{A}) \ar[r]^{K_0(s)}	& K_0(\mathcal{A}/\mathcal{B}) \ar[r]	& 0
} \]
is exact.
\end{thm}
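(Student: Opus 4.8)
The plan is to treat the bottom ($K_0$) row and the top ($K_1$) row separately: the $K_0$ row is formal, and the $K_1$ row is assembled from Lemmas \ref{liftunits}, \ref{elimination}, \ref{equality} and \ref{isomorphism}. For the bottom row, note that $\mathcal{A}$, $\mathcal{B}$ and $\mathcal{A}/\mathcal{B}$ all carry the split exact structure, so the $K_0$ of each is the group completion of its monoid of isomorphism classes under $\oplus$, which since each is Krull-Schmidt is the free abelian group on the set of indecomposables. Lemma \ref{liftunits} shows that an object $A$ of $\mathcal{A}$ with no nonzero summand in $\mathcal{B}$ stays indecomposable and keeps its isomorphism type distinct from the other such objects after applying $s$, while every object of $\mathcal{A}/\mathcal{B}$ is $s(A)$ for some $A$ of this kind; hence $s$ induces a bijection $\mathrm{ind}(\mathcal{A})\setminus\mathrm{ind}(\mathcal{B}) \xrightarrow{\sim} \mathrm{ind}(\mathcal{A}/\mathcal{B})$. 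Thus $0 \to K_0(\mathcal{B}) \xrightarrow{K_0(e)} K_0(\mathcal{A}) \xrightarrow{K_0(s)} K_0(\mathcal{A}/\mathcal{B}) \to 0$ is the split exact sequence of free abelian groups attached to $\mathrm{ind}(\mathcal{B}) \hookrightarrow \mathrm{ind}(\mathcal{A}) \twoheadrightarrow \mathrm{ind}(\mathcal{A})\setminus\mathrm{ind}(\mathcal{B})$; this gives injectivity of $K_0(e)$, exactness at $K_0(\mathcal{A})$, and surjectivity of $K_0(s)$.

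For the top row, $\mathrm{im}\,K_1(e) \subseteq \ker K_1(s)$ holds because $s\circ e$ factors through the zero subcategory, so $K_1(s)\circ K_1(e) = 0$. Surjectivity of $K_1(s)$ uses Lemma \ref{liftunits}: given a generator $[\overline{C},\overline{\alpha}]$ of $K_1(\mathcal{A}/\mathcal{B})$, write $C \cong A_0 \oplus B$ with $B$ in $\mathcal{B}$ and $A_0$ having no nonzero summand in $\mathcal{B}$, so $\overline{C} \cong \overline{A_0}$; transport $\overline{\alpha}$ to an automorphism of $\overline{A_0}$ and lift it to some $\phi_0 \in \mathrm{End}_{\mathcal{A}}(A_0)$; by Lemma \ref{liftunits} $\phi_0$ is an automorphism, so $[\overline{C},\overline{\alpha}] = K_1(s)[A_0,\phi_0]$. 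The substantive inclusion $\ker K_1(s) \subseteq \mathrm{im}\,K_1(e)$ I would prove by constructing a homomorphism $\Phi: K_1(\mathcal{A}/\mathcal{B}) \to K_1(\mathcal{A})/\mathrm{im}\,K_1(e)$ for which $\Phi \circ K_1(s)$ is the canonical projection; this immediately yields $\ker K_1(s) \subseteq \mathrm{im}\,K_1(e)$.

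To build $\Phi$, use the presentation $K_0(\mathrm{Aut}(\mathcal{A}/\mathcal{B})) \twoheadrightarrow K_1(\mathcal{A}/\mathcal{B})$ of \cite{She82} recalled above. On the class of $(\overline{C},\overline{\alpha})$, choose $C \cong A_0 \oplus B$ as in the previous paragraph, lift the transported automorphism of $\overline{A_0}$ to $\phi_0 \in \mathrm{Aut}_{\mathcal{A}}(A_0)$, and set $\Phi[\overline{C},\overline{\alpha}] = [A_0,\phi_0] \bmod \mathrm{im}\,K_1(e)$. Independence of the lift is Lemma \ref{equality}; well-definedness on isomorphism classes in $\mathrm{Aut}(\mathcal{A}/\mathcal{B})$, hence independence of the decomposition $C \cong A_0\oplus B$, is Lemma \ref{isomorphism}; and compatibility with the composition relation $[\overline{\alpha}\,\overline{\beta}] = [\overline{\alpha}]+[\overline{\beta}]$ follows by lifting $\overline{\alpha},\overline{\beta}$ to endomorphisms $\mu,\nu$ of $A_0$ and combining the relation $[\mu\nu]=[\mu]+[\nu]$ in $K_1(\mathcal{A})$ with Lemma \ref{equality}. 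The delicate point is additivity of $\Phi$ over conflations: for a (necessarily split) conflation $(\overline{C'},\overline{\alpha'}) \rightarrowtail (\overline{C''},\overline{\alpha''}) \twoheadrightarrow (\overline{C},\overline{\alpha})$ in $\mathrm{Aut}(\mathcal{A}/\mathcal{B})$, one checks that the $\mathcal{B}$-reduced part of $C''$ may be taken to be $A_0' \oplus A_0$ and that, after the induced identifications, $\overline{\alpha''}$ becomes a block matrix $\left(\begin{smallmatrix}\overline{\phi_0'} & * \\ 0 & \overline{\phi_0}\end{smallmatrix}\right)$ whose diagonal entries are the transported automorphisms used to compute $\Phi[\overline{C'},\overline{\alpha'}]$ and $\Phi[\overline{C},\overline{\alpha}]$; lifting this to the genuinely upper-triangular automorphism $\left(\begin{smallmatrix}\phi_0' & * \\ 0 & \phi_0\end{smallmatrix}\right)$ of $A_0'\oplus A_0$ and applying the conflation relation in $K_1(\mathcal{A})$ — an upper-triangular automorphism has the same class as the sum of its diagonal blocks — gives $\Phi[\overline{C''},\overline{\alpha''}] = \Phi[\overline{C'},\overline{\alpha'}] + \Phi[\overline{C},\overline{\alpha}]$, independent of the chosen lift by Lemma \ref{equality}.

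Finally, one verifies $\Phi \circ K_1(s) = \mathrm{proj}$: given $[A,\phi] \in K_1(\mathcal{A})$, write $A \cong A_0 \oplus B$ and $\phi = \left(\begin{smallmatrix}\phi_0 & a \\ b & c\end{smallmatrix}\right)$, where $\phi_0 \in \mathrm{Aut}_{\mathcal{A}}(A_0)$ by Corollary \ref{sec:2}; Lemma \ref{elimination} gives $[A,\phi] \equiv [A_0,\phi_0] \bmod \mathrm{im}\,K_1(e)$, and $[A_0,\phi_0] \bmod \mathrm{im}\,K_1(e)$ is by construction $\Phi(K_1(s)[A,\phi])$. Hence $\mathrm{im}\,K_1(e) \subseteq \ker K_1(s) \subseteq \ker(\Phi\circ K_1(s)) = \mathrm{im}\,K_1(e)$, so there is exactness at $K_1(\mathcal{A})$, completing the proof. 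Granting Lemmas \ref{liftunits}--\ref{isomorphism}, the main obstacle is exactly the conflation-additivity of $\Phi$ above, together with the matrix bookkeeping needed to produce the block-triangular lift; the rest is formal.
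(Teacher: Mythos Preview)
Your proof is correct and uses the same lemmas (\ref{liftunits}--\ref{isomorphism}) in essentially the same way as the paper. The only organizational difference is at the step $\ker K_1(s)\subseteq\mathrm{im}\,K_1(e)$: you build a retraction $\Phi:K_1(\mathcal{A}/\mathcal{B})\to K_1(\mathcal{A})/\mathrm{im}\,K_1(e)$ and verify $\Phi\circ K_1(s)=\mathrm{proj}$, whereas the paper argues directly that any element of $\ker K_1(s)$ is a sum of elements of three types (isomorphism in $\mathrm{Aut}(\mathcal{A}/\mathcal{B})$, composition relation, conflation relation) and checks each type lands in $\mathrm{im}\,K_1(e)$ via Lemmas \ref{equality} and \ref{isomorphism}---the two arguments are dual packagings of the same verification.
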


\begin{proof}
We show only that the sequence is exact at $K_1(\mathcal{A}/\mathcal{B})$ and $K_1(\mathcal{A})$; the rest is easy.
To show $K_1(s)$ is surjective, take $[A,\phi] \in K_1(\mathcal{A}/\mathcal{B})$; after replacing $(A,\phi)$ by an isomorphic object of $\mathrm{Aut}(\mathcal{A}/\mathcal{B})$, we may assume $A$ has no nonzero summands in $\mathcal{B}$.  Using Lemma \ref{liftunits}, we see that any lift $\widetilde{\phi}$ of $\phi$ to $\mathcal{A}$ is automatically an automorphism of $A$.  Hence $K_1(s)([\widetilde{\phi}]) = [\phi]$, as desired.

Next we show the sequence is exact at $K_1(\mathcal{A})$.  Since $K_1(s)$ is surjective, any element of $\mathrm{ker}\,K_1(s)$ may be written as a sum of elements of the form
\begin{itemize}
\item[1.]
$[A,\phi]-[A',\phi ']$, for some $\phi \in \mathrm{Aut}_{\mathcal{A}}A$ and $\phi' \in \mathrm{Aut}_{\mathcal{A}}A'$ with $(A,\overline{\phi }) \cong (A',\overline{\phi '})$ in $\mathrm{Aut}(\mathcal{A}/\mathcal{B})$
\item[2.]
$[A,\phi ]-[A,\alpha ]-[A,\beta ]$ for some $\phi , \alpha , \beta \in \mathrm{Aut}_{\mathcal{A}}A$ with $\overline {\alpha \beta} = \overline{\phi}$ in $\mathcal{A}/\mathcal{B}$
\item[3.]
$[A,\phi ]-[A',\phi ']-[A'',\phi '']$, for some $(A,\phi )$, $(A',\phi ')$, and $(A'',\phi '')$ in $\mathrm{Aut}(\mathcal{A})$ such that there is a conflation in $\mathrm{Aut}(\mathcal{A}/\mathcal{B})$
\[ \xymatrix{
(A',\overline{\phi '})\ \ar@{>->}[r]	& (A,\overline{\phi }) \ar@{>>}[r]	& (A'',\overline{\phi ''})
} \]
\end{itemize}
We need to check that any such element is in $\mathrm{im}\,K_1(e)$.  Since an element of the first form is also of the third form, we skip the check for elements of the first form.  For elements of the second form, observe that
\[
[A,\phi ] - [A,\alpha ] - [A,\beta ] = [A,\phi ] - [A,\alpha \beta ] \in \mathrm{im}\,K_1(e)
\] by Lemma \ref{equality}.  So it remains to show any element of the third form is in $\mathrm{im}\, K_1(e)$.

Given $(A,\phi )$, $(A',\phi ')$, and $(A'',\phi '')$ as in 3. above, there is an isomorphism in $\mathrm{Aut}(\mathcal{A}/\mathcal{B})$
\[
(A,\overline{\phi}) \cong \left( A'\oplus A'',\left( \begin{smallmatrix}\overline{\phi}' & \overline{h} \\ 0 & \overline{\phi}''\end{smallmatrix}\right) \right)
\]
for some $h:A'' \rightarrow A'$.  Since $\left( \begin{smallmatrix}\phi ' & h \\ 0 & \phi ''\end{smallmatrix}\right)$ is invertible, it follows by Lemma \ref{isomorphism} that
\begin{align*}
[A,\phi ] &\equiv \left[A' \oplus A'', \left( \begin{smallmatrix}\phi ' & h \\ 0 & \phi ''\end{smallmatrix}\right) \right]\ (\mathrm{mod}\ \mathrm{im}\,K_1(e)) \\
&= [A',\phi ']+[A'',\phi '']
\end{align*}
and therefore $[A,\phi]-[A',\phi ']-[A'',\phi ''] \in \mathrm{im}\,K_1(e)$.
\end{proof}

\begin{rem}
There is a one-to-one correspondence between equivalence classes of Krull-Schmidt categories with finitely many indecomposables and Morita classes of semiperfect rings.  The correspondence is defined by assigning to a category $\mathcal{A}$ the ring $(\mathrm{End}_{\mathcal{A}}(\bigoplus \limits_{[M] \in \mathrm{ind}(\mathcal{A})} M))^{\mathrm{op}}$, and by assigning to a ring $R$ the category $\mathsf{proj(R)}$.  Using this correspondence we may restate Theorem \ref{K1} as follows.
Let $R$ be a semiperfect ring and $e \in R$ an idempotent.  Let
\[
S = \{x \in R\ |\ (1-e)x(1-e) \in R^{\times}\}.
\]
Then $R/ReR$ coincides with the localization $S^{-1}R$ of $R$ at $S$; that is, the projection $p:R \rightarrow R/ReR$ is initial among $S$-inverting ring homomorphisms.  Let $f:R \rightarrow eRe$ be the ring homomorphism $x \mapsto exe$.  Then the following sequence is exact.
\[ \xymatrix{
K_1(eRe) \ar[r]^{f^*}	& K_1(R) \ar[r]^{p_*}	& K_1(S^{-1}R) \ar `r[d] `[l] `[lld]_{0} `[dl] [dl]	& 			& \\
			& K_0(eRe) \ar[r]^{f^*}	& K_0(R) \ar[r]^{p_*}					& K_0(S^{-1}R) \ar[r]	& 0
} \]
\end{rem}

\begin{rem}
Theorem \ref{K1} does not seem to follow from known localization theorems in $K$-theory.  In particular, since the functor $\mathsf{D}^b(s):\mathsf{D}^b(\mathcal{A}) \rightarrow \mathsf{D}^b(\mathcal{A}/\mathcal{B})$ between bounded derived categories may not be full, it may not induce an equivalence between the Verdier quotient $\mathsf{D}^b(\mathcal{A})/\mathsf{D}^b(\mathcal{B})$ (or even its idempotent completion) and $\mathsf{D}^b(\mathcal{A}/\mathcal{B})$.  And Theorem \ref{K1} does not follow from \cite[Theorem 0.5]{NeeRan04}: the $\mathrm{Tor}$-condition in that theorem does not seem to be satisfied in this situation.
\end{rem}

\bibliographystyle{amsalpha}
\bibliography{ref}

\end{document}